\documentclass[11pt,reqno]{amsart}

\usepackage[a4paper,margin=1in]{geometry}
\usepackage{amsmath,amssymb,amsthm,mathtools,mathrsfs,bbm}
\usepackage{hyperref}

\usepackage{tikz}
\usetikzlibrary{calc,angles,positioning}

\newtheorem{theorem}{Theorem}[section]
\newtheorem{proposition}[theorem]{Proposition}

\newtheorem{lemma}[theorem]{Lemma}
\newtheorem{question}[theorem]{Question}

\newtheorem*{claim}{Claim}

\numberwithin{equation}{section}

\newcommand{\norm}[1]{\lVert #1\rVert_\infty}
\newcommand{\supp}{\operatorname{supp}}
\newcommand{\dd}{\,\mathrm{d}}

\renewcommand{\leq}{\leqslant}
\renewcommand{\le}{\leqslant}
\renewcommand{\geq}{\geqslant}
\renewcommand{\ge}{\geqslant}

\title[Rectangles, triangles and Schr\"{o}dinger waves]{Rectangles, triangles and Schr\"{o}dinger waves}

\author[Bennett]{Jonathan Bennett}
\address[Jonathan Bennett]{School of Mathematics, The Watson Building, University of Birmingham, Edgbaston, Birmingham, B15 2TT, England.}
\email{J.Bennett@bham.ac.uk}

\author[Kova\v{c}]{Vjekoslav Kova\v{c}}
\address[Vjekoslav Kova\v{c}]{Department of Mathematics, Faculty of Science, University of Zagreb, Bijeni\v{c}ka cesta 30, 10000 Zagreb, Croatia.}
\email{vjekovac@math.hr}

\author[Nakamura]{Shohei Nakamura}
\address[Shohei Nakamura]{School of Mathematics, The Watson Building, University of Birmingham, Edgbaston, Birmingham, B15 2TT, England.}
\email{s.nakamura@bham.ac.uk}

\author[Oliveira]{Itamar Oliveira}
\address[Itamar Oliveira]{School of Mathematics, The Watson Building, University of Birmingham, Edgbaston, Birmingham, B15 2TT, England.}
\email{i.oliveira@bham.ac.uk, oliveira.itamar.w@gmail.com}

\subjclass[2020]{42B10, 44A12, 05B40}
\keywords{Schr\"{o}dinger equation, weighted estimate, Mizohata--Takeuchi estimate, Wigner transform, lattice combinatorics}

\begin{document}

\begin{abstract}
Can a finite set of lattice points determine many rectangles and few isosceles triangles? This turns out to be a surprisingly interesting question in combinatorial geometry that we answer using basic analytic number theory combined with a finite-field construction. The result is useful because it gives obstructions to Mizohata--Takeuchi-type estimates in the setting of the paraboloid. Specifically, we establish transference between Euclidean and periodic weighted $\mathrm{L}^2$ estimates for solutions to the Schr\"{o}dinger equation, and then relate the failure of the latter to quantities tied to combinatorial problems, such as the one above. By completing this programme we give new explicit combinatorial counterexamples to the paraboloid case of the Mizohata--Takeuchi conjecture, which was recently shown to be false by Cairo for curved hypersurfaces.
\end{abstract}

\maketitle

\tableofcontents

%%%%%

\section{Introduction}

Numerous problems in combinatorial geometry concern rectangles or triangles determined by points in the integer lattice $\mathbb{Z}^2$. We begin with a simple-looking question of this type involving both rectangles and triangles, and then go on to reveal a surprising connection with the behaviour of solutions to the Schr\"{o}dinger equation. Readers with a combinatorial inclination may also find the question of independent interest.

%Such estimates already have well-established links with distance-set questions, although usually from the viewpoint of geometric measure theory rather than combinatorial geometry; see Mattila \cite{Matt} and the more recent work of Du, Guth, Ou, Wang, Wilson and Zhang \cite{DuGuthOuWangWilsonZhang}. 
Configuration counts already have well-established links with problems in harmonic analysis and dispersive PDE, some of the most prominent examples being: 
counts of lattice solutions arising from moments of exponential sums \cite{B1},
counts of polynomial Diophantine relations associated with periodic KdV moments \cite{B2},
counts of repeated angles, parallelograms or rectangles to prove extension estimates for finite-field paraboloids \cite{RS18,IKL20,Lew20}
and counts of parallelograms in frequency sets to prove Strichartz estimates for periodic Schr\"{o}dinger equations \cite{HK,LZ25}. However, the appearance of triangles from such problems in harmonic analysis seems to be much more unusual.
We will need to show that rectangles inside a subset of the grid can greatly outnumber isosceles triangles, and such (direct) comparative counts of geometric configurations also appear to be relatively uncommon in the literature. Certain comparative counts are common in additive combinatorics, where counts of arithmetic progressions (and other configurations) in a set are controlled by the Gowers uniformity norms, which themselves count parallelotopes; see \cite{TV, THOFA}. One may also
compare counts of parallelotopes of different dimensions using the log-convexity of Gowers norms \cite{Shk14,Man17,BT24}.

\subsection{A problem in combinatorial geometry}
For a finite set $A\subseteq\mathbb{Z}^2$, let $\Box(A)$ be the set of \emph{ordered rectangles} in $A$:
\[ \Box(A) := \{(k_1,k_2,k_3,k_4)\in A^4 \,:\, k_1+k_3=k_2+k_4,\ (k_2-k_1)\cdot(k_4-k_1)=0 \}, \]
where $\cdot$ denotes the usual dot product in $\mathbb{Z}^2\subseteq\mathbb{R}^2$; see Figure~\ref{fig:rect_and_trian}.
Also, let $\triangle(A)$ be the set of \emph{ordered isosceles triangles} in $A$:
\[ \triangle(A) := \{(k_1,k_2,k_3)\in A^3 \,:\, (2k_3-k_1-k_2)\cdot(k_2-k_1)=0\}. \]
For $k\in A$, let $\triangle^k(A)$ denote the collection of ordered isosceles triangles in $A$ whose distinguished apex-vertex is $k$:
\[ \triangle^k(A) := \{(k_1,k_2,k)\in A^3 \,:\, (2k-k_1-k_2)\cdot(k_2-k_1)=0\}, \]
so that
\[ \triangle(A) = \bigcup_{k\in A} \triangle^k(A),\quad \text{i.e.,}\quad \#\triangle(A) = \sum_{k\in A} \#\triangle^k(A); \]
see Figure~\ref{fig:rect_and_trian} again.
Degenerate configurations are included throughout; this is important for the problem, since degeneracies can contribute significantly to the total count. For instance, $(k_1,k_1,k_1,k_1)$ and $(k_1,k_1,k_3,k_3)$ are examples of degenerate rectangles, while $(k_1,k_1,k_1)$ and $(k_1,k_1,k_3)$ are degenerate isosceles triangles.

\begin{figure}
\begin{center}
\begin{tikzpicture}[>=stealth]
    \foreach \x in {0,...,13} {
        \foreach \y in {0,...,6} {
            \fill[gray!70] (\x,\y) circle (0.05);
        }
    }
    \coordinate (m1) at (2,1);
    \coordinate (m2) at (6,3);
    \coordinate (m3) at (5,5);
    \coordinate (m4) at (1,3);
    \draw[thick, black, fill=gray!20] (m1) -- (m2) -- (m3) -- (m4) -- cycle;
    \pic [draw, gray, thick, angle radius=4mm] {right angle = m2--m1--m4};
    \fill[black] (m1) circle (0.06) node[below left] {$k_1$};
    \fill[black] (m2) circle (0.06) node[below right] {$k_2$};
    \fill[black] (m3) circle (0.06) node[above right] {$k_3$};
    \fill[black] (m4) circle (0.06) node[above left] {$k_4$};
    \node[black, font=\large] at (3, -0.8) {An element of $\Box(A)$.};
    \coordinate (t1) at (8,1);
    \coordinate (t2) at (12,3);
    \coordinate (a)  at (9,4);
    \coordinate (M)  at ($(t1)!0.5!(t2)$);
    \draw[thick, black, fill=gray!20] (t1) -- (t2) -- (a) -- cycle;
    \draw[thick, dashed, gray] (M) -- (a);
    \pic [draw, gray, thick, angle radius=3mm] {right angle = a--M--t2};
    \fill[black] (t1) circle (0.06) node[below left] {$k_1$};
    \fill[black] (t2) circle (0.06) node[below right] {$k_2$};
    \fill[black] (a)  circle (0.06) node[above] {$k$ (apex)};
    \fill[black] (M)  circle (0.06) node[below right] {$(k_1+k_2)/2$};
    \draw[thick, gray] ($(t1)!0.5!(M) + (-0.1, 0.15)$) -- ($(t1)!0.5!(M) + (0.1, -0.15)$);
    \draw[thick, gray] ($(M)!0.5!(t2) + (-0.1, 0.15)$) -- ($(M)!0.5!(t2) + (0.1, -0.15)$);
    \node[black, font=\large] at (10, -0.8) {An element of $\triangle^k(A)$.};
\end{tikzpicture}
\end{center}
\caption{Rectangles and isosceles triangles on the integer lattice.}
\label{fig:rect_and_trian}
\end{figure}

We ask whether the number of rectangles in a finite set $A\subseteq\mathbb{Z}^2$ is necessarily controlled by the number of isosceles triangles in $A$.

\begin{question}\label{quest:combinatorics}
Is there a constant $M>0$ such that
\begin{equation}\label{boxtriangle}
\#\Box(A) \leq M \#\triangle(A)
\end{equation}
holds for every finite set $A\subseteq\mathbb{Z}^2$? A similar question can be asked about the weaker ``pinned'' inequality
\begin{equation}\label{pinnedboxtriangle}
\#\Box(A) \leq M \,\#A\, \max_{k\in A}\#\triangle^k(A).
\end{equation}
\end{question}

Inequalities \eqref{boxtriangle} and \eqref{pinnedboxtriangle} should be compared with the estimate
\begin{equation}\label{pachsharir}
\#\Box(A)\lesssim (\#A)^2\log\#A
\end{equation}
of Pach and Sharir \cite{PachSharir}, which is best possible if the right-hand side depends only on the cardinality of $A$. Pach and Sharir also considered bounds for $\#\triangle(A)$, proving that
\begin{equation}\label{pachiso}
\#\triangle(A)\lesssim (\#A)^{7/3},
\end{equation}
and this has since been improved by Pach and Tardos \cite{PachTardos}. Both \eqref{pachsharir} and \eqref{pachiso} hold for arbitrary finite subsets of the plane, not just subsets of the integer lattice. For context, upper bounds on $\#\triangle(A)$ quickly imply distance-set estimates (see \cite{PachSharir,PachTardos, TV}), suggesting that $\#\triangle(A)$ may also satisfy a near-quadratic bound similar to \eqref{pachsharir}.

Both parts of Question~\ref{quest:combinatorics} have negative answers. We quantify their failure in two natural finitary ways. Define
\[ \mathfrak{R}(N) := \max_{\emptyset\ne A\subseteq\mathbb{Z}^2\cap[-N,N]^2} \frac{\#\Box(A)}{\#\triangle(A)},
\quad \mathfrak{R}^{\mathrm{pin}}(N) := \max_{\emptyset\ne A\subseteq\mathbb{Z}^2\cap[-N,N]^2} \frac{\#\Box(A)}{\#A\, \max_{k\in A}\limits \#\triangle^k(A)}, \]
and
\[ \widetilde{\mathfrak{R}}(n) := \sup_{\substack{A\subseteq\mathbb{Z}^2\\ \#A=n}} \frac{\#\Box(A)}{\#\triangle(A)},
\quad \widetilde{\mathfrak{R}}^{\mathrm{pin}}(n) := \sup_{\substack{A\subseteq\mathbb{Z}^2\\ \#A=n}} \frac{\#\Box(A)}{\#A\, \max_{k\in A}\limits \#\triangle^k(A)}. \]
In words, $\mathfrak{R}(N)$ and $\mathfrak{R}^{\mathrm{pin}}(N)$ measure the blowup of the best constants in \eqref{boxtriangle} and \eqref{pinnedboxtriangle} as $A$ ranges over subsets of a bounded part of the integer lattice $\mathbb{Z}^2\cap[-N,N]^2$, while $\widetilde{\mathfrak{R}}(n)$ and $\widetilde{\mathfrak{R}}^{\mathrm{pin}}(n)$ measure the blowup as $A$ varies over all $n$-element subsets of the lattice.

Here is our main combinatorial result.

\begin{theorem}\label{thm:combinatorics}
There is a constant $c>0$ such that, for all sufficiently large $N$,
\[ \mathfrak{R}(N) \geq \mathfrak{R}^{\mathrm{pin}}(N) \geq (\log N)^{c/\log\log\log N}, \]
and, for all sufficiently large positive integers $n$,
\[ \widetilde{\mathfrak{R}}(n) \geq \widetilde{\mathfrak{R}}^{\mathrm{pin}}(n) \geq (\log n)^{c/\log\log\log n}. \]
\end{theorem}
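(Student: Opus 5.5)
The plan is to build $A$ from the arithmetic of Gaussian integers, so that it contains many rectangles in many different orientations, and then thin it using a reduction modulo a prime, so that isosceles triangles with a fixed apex stay close to the trivial count. Two simple observations guide this. The degenerate triangles $(k_1,k_1,k)$ already give $\#\triangle^k(A)\geq\#A$, and the degenerate rectangles give $\#\Box(A)\geq(\#A)^2$. So it suffices to find $A$ with
\[
\#\Box(A)\geq L\,(\#A)^2
\quad\text{and}\quad
\max_{k\in A}\#\triangle^k(A)\leq C\,\#A,
\qquad\text{where } L=(\log N)^{c/\log\log\log N}.
\]
The first inequality in each chain of the theorem is trivial, since $\#\triangle(A)=\sum_{k\in A}\#\triangle^k(A)\leq\#A\,\max_k\#\triangle^k(A)$. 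Moreover, the bound in $n$ follows from the bound in $N$, because our sets will have $\#A$ polynomial in $N$, and for large $n$ one can embed an extremal configuration (padded by far-away sparse points) to get size exactly $n$.

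\emph{Step 1 (source of rectangles).} Let $m$ be a product of $\omega$ distinct primes $p_j\equiv1\pmod 4$, chosen to be the smallest such primes. Then $m$ has about $2^{\omega}$ essentially distinct representations $m=a^2+b^2$, that is, $2^{\omega}$ directions $v=(a,b)$ with $|v|^2=m$. By basic analytic number theory (prime number theorem in progressions), taking $\omega\approx c\log\log N/\log\log\log N$ keeps $m$ and the associated coordinates below $N^{o(1)}$. This gives $2^{\omega}\geq(\log N)^{c/\log\log\log N}=L$, which is exactly the target loss. A set $A$ that is a union of translates of grid-like pieces, each spanned by an orthogonal pair $(v,v^\perp)$, contains rectangles in each of these $L$ orientations. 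The aim is to design $A$ so that each orientation contributes $\gtrsim(\#A)^2$ rectangles, giving $\#\Box(A)\gtrsim L(\#A)^2$.

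\emph{Step 2 (killing triangles via a finite field).} Fix a prime $p$ and choose a set $E\subseteq\mathbb{F}_p^2$ that has few solutions to
\[
(2k-k_1-k_2)\cdot(k_2-k_1)\equiv0 \pmod p
\]
with $k$ fixed, beyond the degenerate ones. Natural candidates are points on a suitable conic or a random subset. Set $A$ to be the points of the Step~1 configuration whose reduction mod $p$ lies in $E$. Each triangle in $\mathbb{Z}^2$ reduces to a solution mod $p$, so $\#\triangle^k(A)$ is bounded by the number of lattice points lying over the mod-$p$ solutions. The point of the construction is to choose $E$ compatible with the rectangle structure: we want rectangles of the shape $(x,x+v,x+v+w,x+w)$ to survive, which requires $E$ to be closed under such orthogonal parallelogram moves. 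At the same time the equidistance relation should not survive, because isosceles triangles are governed by a quadratic condition while rectangles are governed by a linear one together with an orthogonality condition.

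\emph{Step 3 (counting).} We count rectangles by orientation, using the orthogonality of $v$ and $v^\perp$ in $\mathbb{F}_p$. For each apex $k$ we bound $\#\triangle^k(A)$ by summing, over the circles centered at $k$, the square of the number of points of $A$ on each circle. The finite-field restriction reduces this to about $\#A$ plus an error that is negligible for $p$ chosen as a suitable power of $N$.

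The main obstacle is Step~2: making the finite-field filter preserve $L$ families of rotated rectangles while destroying the isosceles-triangle coincidences. Circles centered at $k$ are themselves rich exactly when norms have many Gaussian factorizations, which is the same arithmetic that creates the rectangles. I would first try $E$ defined by a quadratic form that is isotropic along the chosen directions $v$ mod $p$ but makes $|x-k|^2$ essentially injective on $E$. If that fails, I would fall back to a random subset $E$ combined with a second-moment bound on the triangle count.
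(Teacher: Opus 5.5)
Your proposal has a genuine gap: it never constructs the set. Step~2, the only step that could separate rectangles from isosceles triangles, is left open (``I would first try \ldots\ If that fails \ldots''). Moreover, the Step~1 mechanism that is supposed to supply the gain $L=2^{\omega}$ does not work. For a fixed primitive $w$, the ordered rectangles in $A$ with sides parallel to $w,Jw$ are determined by their diagonal, so there are $O((\#A)^2)$ of them, with near-equality only if $A$ is essentially a grid in that frame. A grid piece spanned by $(v,Jv)$ with $|v|^2=m$ is a piece of a translate of $v\,\mathbb{Z}[i]$, which is a similar copy of $\mathbb{Z}^2$. Inside it, the frame of another $v'$ with $|v'|^2=m$ becomes a primitive lattice direction of squared length $\prod_{j\in S}p_j^2$, where $v'$ differs from $v$ by conjugating the Gaussian factors over $j\in S$. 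That frame therefore contributes only $\asymp(\#A)^2/\prod_{j\in S}p_j^2$, and all $2^{\omega}$ frames together give $(\#A)^2\prod_j(1+p_j^{-2})=O((\#A)^2)$. Taking unions of translates of different such pieces only dilutes this. A surplus of rectangles over $(\#A)^2$ has to come from summing over all primitive directions (the $\log N$ of Pach--Sharir), not from a few special ones.

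Your intermediate target $\max_k\#\triangle^k(A)\le C\#A$ is also far stronger than needed, and the mod-$p$ filtering you describe does not reach it. Your Step~3 also ignores lattice triangles with $k_1\equiv k_2\pmod p$. These are genuine in $\mathbb{Z}^2$ but degenerate mod $p$, and heuristically there are about $\#A\cdot\log N/p$ of them per apex, whatever $E$ is. Killing them would force $p\gtrsim\log N$. But lattice counts in residue classes are only governed by the residue counts when $p^2\ll\log N$. Per class, the main term is $\asymp \#E\,N^4\log N/p^6$ against an error $\#E\,N^4/p^4$ coming from short directions $w$. So taking $p$ a power of $N$ removes any way to bound $\#\triangle^k(A)$ from data in $\mathbb{F}_p^2$. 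The same heuristic also puts the residue-degenerate rectangles at only $(\#A)^2\log N/p$. A random $E$ goes the wrong way, since it thins four-point configurations more than three-point ones.

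The paper does not suppress triangles at all. It takes $A_N(q)=\{k\in[-N,N]^2: |k|^2\equiv1\pmod q\}$ with $q$ a product of primes $\equiv3\pmod4$. This is a circle mod $q$: the fourth rectangle vertex is automatic, and $z\cdot z\ne0$ for $z\ne0$, the opposite of your isotropic idea. It counts parametrised configurations $(x,w,r,s)$ mod $p$, getting $4(p-1)p(p+1)$ rectangles against $3(p-1)p(p+1)$ triangles. It then lifts both to counts of order $N^4\log N$ times these local densities when $N\ge e^{C_0q^2}$. Both counts are thus far above $(\#A)^2$, and their ratio is $(4/3)^{\#P}$ with $\#P\asymp\log\log N/\log\log\log N$.
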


The lower estimates from Theorem~\ref{thm:combinatorics} are close to optimal because they are complemented by easy upper bounds. Namely, combining \eqref{pachsharir} with the trivial observation $\#\triangle(A)\geq(\#A)^2$, we obtain
\[ \widetilde{\mathfrak{R}}^{\mathrm{pin}}(n) \leq \widetilde{\mathfrak{R}}(n) \lesssim \log n. \]
Applying the same to a subset $A\subseteq\mathbb{Z}^2\cap[-N,N]^2$, which necessarily has cardinality $\#A\lesssim N^2$, gives
\[ \mathfrak{R}^{\mathrm{pin}}(N) \leq \mathfrak{R}(N) \lesssim \log N. \]

The construction of the sets that achieve the bounds stated in Theorem~\ref{thm:combinatorics} was found by OpenAI's ChatGPT 5.5 Pro; for details see the section declaring the AI usage. The authors ran it numerous times shortly after the announcement of the breakthrough by an internal OpenAI model on the Erd\H{o}s unit distance conjecture \cite{OpenAIErdos,UnitErdos}. Even though the two problems are very different, there is a certain similarity between the proofs, both in the local-to-global transference from simpler fields to the reals, and in the use of certain sieving when counting occurrences of (un)desired patterns. This vague analogy is even more apparent in relation to subsequent papers \cite{SumProduct,SplitPrimes,CMPSY} enabled by the aforementioned breakthrough, which appeared after our initial discovery, while we were writing up the results. Related local-to-global arguments may be found elsewhere in harmonic analysis, with \cite{HW18} being a natural example to mention.

\subsection{Strichartz estimates for the Schr\"{o}dinger equation}
Now we turn to the analytic side of the paper.

The classical \emph{Strichartz estimates} for a solution $u\colon\mathbb{R}^d\times\mathbb{R}\rightarrow\mathbb{C}$ of the \emph{free Schr\"{o}dinger equation}
\begin{equation*}%\label{SchrodingerEuc}
2\pi i\frac{\partial}{\partial t} u(x,t) = \Delta_x u(x,t); \quad (x,t)\in\mathbb{R}^d\times \mathbb{R},
\end{equation*}
with initial data $u_0\in \mathrm{L}^2(\mathbb{R}^d)$ are space-time norm inequalities of the form
\begin{equation}\label{cts}
\|u(x,t)\|_{\mathrm{L}^q_t\mathrm{L}^p_x(\mathbb{R}^d\times\mathbb{R})}\lesssim\|u_0\|_{\mathrm{L}^2(\mathbb{R}^d)}.
\end{equation}
The time-integrability captured by such estimates is a reflection of the dispersive nature of the Schr\"{o}dinger equation. These inequalities and their variants have had many applications, notably in the well-posedness theory of nonlinear Schr\"{o}dinger equations; see for example \cite{TaoDisp}. 
The Lebesgue exponents $p,q$ for which \eqref{cts} holds have been known for some time, beginning with the original work of Strichartz \cite{Stric}, and culminating in \cite{KT}. From the point of view of harmonic analysis, these are oscillatory integral estimates arising from the validity of the explicit formula
\begin{equation}\label{eq:Eucl-solution} 
u(x,t)=\int_{\mathbb{R}^d}\widehat{u_0}(\xi) e^{2\pi i(x\cdot \xi+t|\xi|^2)}\dd \xi
\end{equation}
for suitable initial data $u_0$.

It is also natural to consider Strichartz inequalities in periodic settings. For a function $u_0\in \mathrm{L}^2(\mathbb{T}^d)$, where $\mathbb{T}^d$ denotes the $d$-dimensional torus, the Strichartz estimates for the \textit{periodic Schr\"{o}dinger equation}
\begin{equation*}%\label{Schrodingerper}
2\pi i\frac{\partial}{\partial t} u(x,t) = \Delta_x u(x,t); \quad (x,t)\in\mathbb{T}^d\times \mathbb{T},
\end{equation*}
take a similar form
\begin{equation}\label{disc}
\|u(x,t)\|_{\mathrm{L}^q_t\mathrm{L}^p_x(\mathbb{T}^d\times\mathbb{T})}\lesssim\|u_0\|_{\mathrm{L}^2(\mathbb{T}^d)}.
\end{equation}
However, these are now exponential sum estimates, with
the solution $u$ given by the formula
\begin{equation}\label{eq:per-solution}
u(x,t)=\sum_{k\in\mathbb{Z}^d}\widehat{u_0}(k)e^{2\pi i(x\cdot k+t|k|^2)},
\quad (x,t)\in\mathbb{T}^d\times\mathbb{T}
\end{equation}
for suitable initial data $u_0$.
The family of inequalities \eqref{disc} appears to be considerably more subtle than its Euclidean counterpart \eqref{cts}, and the full range of admissible exponents $p,q$ remains to be understood.
Much is known, however, beginning with \cite{B1} and culminating recently in \cite{BD} in the pure-norm ($p=q$) case; see \cite{Dem,Nah}.
Characterisation of the mixed-norm ($p\neq q$) case is still open. In this direction, we mention \cite{BGT}, in which they established \eqref{disc} for a wide range of $p,q$ by allowing a certain loss of regularity. In \cite{KNS}, motivated by a problem in nonlinear dispersive PDEs, the authors proposed the question of the validity of \eqref{disc} at the Keel--Tao endpoint with an $\varepsilon$-loss of regularity; as far as we know, this remains open as well.
Related to this problem, it was recently observed in \cite{DJLM} that the corresponding mixed-norm decoupling estimates turn out to be false. 
As may therefore be expected, the admissible exponents in \eqref{cts} and \eqref{disc} are well known to be quite different, reflecting common distinctions between oscillatory integrals and exponential sums, the latter relating to aspects of combinatorial geometry and number theory. In particular, it is not in general possible to pass between the continuous and discrete inequalities \eqref{cts} and \eqref{disc} for given exponents $p,q$. 

\subsection{Weighted Strichartz estimates on \texorpdfstring{$\mathbb{R}^{d+1}$}{Euclidean space}}
In weighted $\mathrm{L}^2$ settings, however, such distinctions may be less apparent. 
There is a considerable literature on weighted Strichartz estimates of various forms, often involving specific classes of weights; see, for example, \cite{BRV} and the references there. In certain situations, weighted Strichartz estimates are referred to as \textit{Morawetz estimates}, having their origins in \cite{Mor}; see \cite{TaoDisp}. 
Here we are interested in $\mathrm{L}^2$ quantities of the form
\begin{equation}\label{weightednorm}
\int_{\mathbb{R}} \int_{\mathbb{R}^d} |u(x,t)|^2w(x,t)\dd x\dd t,
\end{equation}
where $w$ is an arbitrary weight, by which we mean a nonnegative locally integrable function. There is a growing literature on $\mathrm{L}^2$ estimates for such expressions, partly due to their relation with problems in geometric measure theory; see, for example, \cite{Du} and the references there.
%The weight indicates which parts of space-time are emphasised. %The natural endpoint control would come from integrating the weight along the straight rays followed by Schr\"{o}dinger wave packets.

One way to generate conjectural estimates for the general weighted norm \eqref{weightednorm} is to first reformulate it as a \textit{phase-space integral}, as recently described in \cite{DMV, BGNO}. The starting point is the \emph{Wigner transform} $W\colon \mathrm{L}^2(\mathbb{R}^d)\times \mathrm{L}^2(\mathbb{R}^d)\rightarrow \mathrm{L}^\infty(\mathbb{R}^d\times\mathbb{R}^d)$ given by
\begin{equation}\label{eucwig}
W(g_1,g_2)(x,v):=\int_{\mathbb{R}^d}g_1\Bigl(x+\frac{y}{2}\Bigr)\overline{g_2\Bigl(x-\frac{y}{2}\Bigr)} e^{-2\pi i v\cdot y}\dd y; \quad (x,v)\in\mathbb{R}^d\times\mathbb{R}^d,
\end{equation} 
and the well-known observation \cite{Wigner} that the function 
\[ f(x,v,t) := W\bigl(u(\cdot,t), u(\cdot,t)\bigr)(x,v); \quad (x,v,t)\in\mathbb{R}^d\times\mathbb{R}^d\times\mathbb{R} \]
satisfies the \emph{kinetic transport equation} 
\begin{equation}\label{kineuc}
\frac{\partial}{\partial t}f(x,v,t) = 2v\cdot\nabla_x f(x,v,t),
\end{equation}
meaning that 
\[ W(u(\cdot,t), u(\cdot,t))(x,v)=W(u_0,u_0)(x+2tv,v). \]
It then follows by an application of the classical spatial marginal property
\[ \int_{\mathbb{R}^d}W(g,g)(x,v)\dd v=|g(x)|^2 \]
that
\begin{equation}\label{PSrepcont}
|u(x,t)|^2=\rho W(u_0,u_0)(x,t),
\end{equation}
where the operator $\rho$, acting on a suitable phase-space function $f_0$, is given by
\[ \rho f_0(x,t):=\int_{\mathbb{R}^d}f_0(x+2tv,v)\dd v. \]
By duality this leads to the desired phase-space integral representation
\[ \int_{\mathbb{R}}\int_{\mathbb{R}^d}|u(x,t)|^2w(x,t)\dd x\dd t=\int_{\mathbb{R}^d}\int_{\mathbb{R}^d}W(u_0,u_0)(x,v)\rho^{\ast}w(x,v)\dd x\dd v, \]
where we identify
the operator 
\[ \rho^{\ast}w(x,v)=\int_{\mathbb{R}}w(x-2tv,t)\dd t \]
as a certain \emph{parametrised space-time X-ray transform}: for each $x,v\in\mathbb{R}^d$ it computes an integral of the weight $w$ along the space-time line through $(x,0)$ in the direction $(-2v,1)$.
Finally, noting the frequency marginal property 
\[ \int_{\mathbb{R}^d}W(g,g)(x,v)\dd x=|\widehat{g}(v)|^2, \]
one is then tentatively led to the following conjectures.

\begin{claim}[{Parabolic Stein-type estimate, motivated by \cite{SteinProb}}]
For each $d\geq1$ there is a constant $C$ such that
\begin{equation}\label{eq:euc-Stein}
\int_0^1\int_{\mathbb{R}^d}|u(x,t)|^2w(x,t)\dd x\dd t
\le C\int_{\mathbb{R}^d}|\widehat{u_0}(v)|^2\sup_{x\in\mathbb{R}^d}\rho^{\ast}w(x,v)\dd v
\end{equation}
for all nonnegative weights $w$ and all $u_0$ with compact Fourier support.
\end{claim}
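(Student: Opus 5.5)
The Claim is stated only tentatively, and I expect it to be \emph{false}. My plan is therefore to disprove \eqref{eq:euc-Stein} by reducing it to Theorem~\ref{thm:combinatorics}. First I would formulate the periodic analogue of \eqref{eq:euc-Stein} on $\mathbb{T}^d\times\mathbb{T}$ with $d=2$. In it, $u$ is given by \eqref{eq:per-solution}, $\rho^\ast w(x,v)=\int_{\mathbb{T}} w(x-2tv,t)\dd t$ for $v\in\mathbb{Z}^2$, and the right-hand side becomes $\sum_k |\widehat{u_0}(k)|^2\sup_x\rho^\ast w(x,k)$. I would show that this periodic estimate fails, with constant at least of order $\mathfrak{R}(N)$ when the frequencies lie in $[-N,N]^2$. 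Then I would transfer the failure to $\mathbb{R}^2\times[0,1]$.

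\emph{Periodic counterexample.} Fix $A\subseteq\mathbb{Z}^2\cap[-N,N]^2$, set $\widehat{u_0}=\mathbbm{1}_A$, and choose the weight $w=|u|^2$, which is nonnegative.
\begin{itemize}
\item The left-hand side is then $\int_{\mathbb{T}^3}|u|^4$. By orthogonality it counts quadruples in $A^4$ with $k_1+k_3=k_2+k_4$ and $|k_1|^2+|k_3|^2=|k_2|^2+|k_4|^2$. Substituting $k_3=k_2+k_4-k_1$ shows this system is equivalent to $(k_2-k_1)\cdot(k_4-k_1)=0$, so the count is exactly $\#\Box(A)$.
\item On the right, for $v=k\in A$, I would expand
\[ |u(x-2tk,t)|^2=\sum_{k_1,k_2\in A}e^{2\pi i[(x-2tk)\cdot(k_1-k_2)+t(|k_1|^2-|k_2|^2)]}. \]
Integrating in $t$ keeps only the pairs with $|k_1|^2-|k_2|^2=2k\cdot(k_1-k_2)$, that is, $(k_1+k_2-2k)\cdot(k_1-k_2)=0$.
\item Bounding the remaining phases $e^{2\pi i x\cdot(k_1-k_2)}$ by $1$ gives $\sup_x\rho^\ast w(x,k)\le\#\triangle^k(A)$. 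Hence the right-hand side is at most $\#\triangle(A)$.
\end{itemize}
So the periodic constant is at least $\#\Box(A)/\#\triangle(A)$. By Theorem~\ref{thm:combinatorics} this is at least $(\log N)^{c/\log\log\log N}$ for a suitable $A$, which is unbounded.

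\emph{Transference to $\mathbb{R}^2$.} Take $\widehat{u_0}=\sum_{k\in A}\varphi_\delta(\xi-\lambda k)$, where $\varphi_\delta$ is a fixed bump of width $\delta$ and $\lambda$ is a large scale. Parabolic rescaling then converts the unit time interval into many periods of an approximately periodic solution. Over a spatial box of side $\sim1/\delta$, the solution $u$ agrees up to modulation with a rescaled copy of the periodic solution. I would take $w=|u_{\mathrm{per}}|^2$ in rescaled variables, multiplied by a cutoff to that box.
\begin{itemize}
\item The left-hand side is then comparable to $\#\Box(A)$ times the normalising volume.
\item Each line integral in $\rho^\ast w$ runs over a segment of length $O(1)$ in time. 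In rescaled variables this covers $\sim\lambda^2$ periods, so it reproduces the periodic X-ray average. Frequencies $v$ within $\delta$ of $\lambda k$ only shift the line by a negligible amount, provided $\delta\lambda\ll1$ relative to the box.
\item The cutoff contributes only boundary terms.
\end{itemize}

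The main obstacle is this transference step. The scales must be chosen in the right order: $N$ fixed, then $\lambda$ large, then $\delta$ small, with the box size $1/\delta$. The supremum over $x\in\mathbb{R}^2$ must be controlled uniformly, including lines that leave the box, and the tails of $u$ outside the box must be shown to be negligible in the left-hand side. I would first try Schwartz-tail bounds on the Gaussian-like wave packets together with a pigeonholing or averaging argument over translates, to pass from the smooth cutoff to the exact periodic counts.
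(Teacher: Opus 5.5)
Your proposal follows the paper's route: Lemma~\ref{lem:counts} and Theorem~\ref{thm:config-lower-intro}, with $\widehat{u_0}=\mathbbm{1}_A$ and $w=|u_A|^2$, combined with Theorem~\ref{thm:combinatorics}, make $\mathfrak{S}_{\mathbb{T}^2}(N)$ unbounded, and the wave-packet thickening of Theorem~\ref{thm:quant-transference-intro} transfers this to $\mathfrak{S}_{\mathbb{R}^2}(N+1)$.

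The paper's transference is simpler than your sketch. No rescaling $\lambda$ is needed, since $[0,1]$ is already one time period. No spatial cutoff is needed either: the paper applies the Euclidean estimate with the spatially periodic weight $w(x,t)\mathbbm{1}_{[0,1]}(t)$, where the time indicator is what keeps $\rho^{\ast}w$ finite. Periodising $|u_L|^2$ in $x$ and using Parseval then gives the left-hand side exactly in the limit, and uniform continuity of $X_w$ handles the right-hand side. The obstacles you flag (lines leaving the box, tails of $u$) therefore never arise. Even with a cutoff $0\le\chi\le1$, the bound on the supremum over lines would be trivial, since $\rho^{\ast}(w\chi)\le\rho^{\ast}w$.
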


\begin{claim}[{Parabolic Mizohata--Takeuchi estimate, motivated by \cite{Tak74,Tak80,Miz85,BRV}}]
For each $d\geq1$ there is a constant $C$ such that
\begin{equation}\label{eq:euc-MT}
\int_0^1\int_{\mathbb{R}^d}|u(x,t)|^2w(x,t)\dd x\dd t
\le C\sup_{\substack{x\in\mathbb{R}^d\\ v\in\supp\widehat{u_0}}}\!\!\!\!\rho^{\ast}w(x,v)\ \|u_0\|_{\mathrm{L}^2(\mathbb{R}^d)}^2
\end{equation}
for all $w$ and $u_0$ as before. 
\end{claim}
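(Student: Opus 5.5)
The plan is to work on the phase-space side, through the representation
\[
\int_{\mathbb{R}}\int_{\mathbb{R}^d}|u|^2w=\int\!\!\int W(u_0,u_0)\,\rho^{\ast}w .
\]
If $W(u_0,u_0)$ were nonnegative, the estimate \eqref{eq:euc-MT} would be immediate. Indeed, one would bound $\rho^{\ast}w(x,v)$ by its supremum over $x\in\mathbb{R}^d$ and $v\in\supp\widehat{u_0}$, and then integrate $W(u_0,u_0)$ over all of phase space, which returns $\|u_0\|_2^2$ by the marginal properties. The whole proof is therefore about controlling the negative part of the Wigner transform.

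The first step is to localise. Since $\widehat{u_0}$ has compact support and the time integral runs over $[0,1]$, I decompose $\widehat{u_0}$ into pieces adapted to $R^{-1/2}$-caps and $u_0$ into the associated wave packets. Each packet $\phi_T$ is concentrated on a space-time tube $T$ of width $R^{1/2}$ and direction $(-2v_T,1)$, and on the unit time interval $|\phi_T|^2$ is essentially comparable to an averaged line integral of $w$. For the diagonal part of $\int|u|^2w$ this should give
\[
\sum_T |c_T|^2\int|\phi_T|^2 w\lesssim \sup\rho^{\ast}w\ \sum_T|c_T|^2\sim \sup\rho^{\ast}w\ \|u_0\|_2^2,
\]
using that an average of line integrals is at most their supremum.

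The off-diagonal interactions $\int\phi_T\overline{\phi_{T'}}\,w$ are the main obstacle, and they are exactly where the Wigner transform fails to be positive. Here I would use an orthogonality argument. Two packets interact only if their tubes intersect and their frequencies are close. The plan is to bound the resulting interaction matrix by a Schur test, with row sums controlled by $\sup\rho^{\ast}w$. For $d=1$ I would instead pass to the Fourier side. There, $\int|u|^2w$ becomes a bilinear form in $\widehat{u_0}$ with kernel $\widehat{w}(\xi-\eta,|\xi|^2-|\eta|^2)$, and after the change of variables $\tau=\xi^2$ along the parabola one can hope to compare this directly with line integrals of $w$.

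I expect the Schur-test step to be the genuine difficulty. For $d\ge2$ the tubes through a point form a $(d-1)$-parameter family. The row sums then count incidences between tubes whose frequencies lie on the paraboloid, and this count is where arithmetic structure enters. These are the configurations that, in the periodic model, are recorded by rectangles and isosceles triangles. Controlling those sums uniformly in $R$, without the logarithmic loss that the trivial bound gives, is the crux.

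My first attempt would be an induction on scales that absorbs the loss at each scale. Given the behaviour exhibited in Theorem~\ref{thm:combinatorics}, however, I anticipate that this step cannot be closed in full generality.
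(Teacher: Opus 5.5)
There is a genuine gap, and it cannot be repaired, because the statement you are trying to prove is false. The paper does not prove it. The claim is recorded only as a conjecture suggested by the Wigner-transform heuristic, which is exactly your positivity observation, and the rest of the paper quantifies its failure. Cairo's counterexample already gives $\mathfrak{M}_{\mathbb{R}^d}(N)\gtrsim\log N$. Combining Theorem~\ref{thm:quant-transference-intro} with Theorem~\ref{thm:d1-log-intro}, the paper obtains $\mathfrak{M}_{\mathbb{R}^d}(N)\gtrsim_d(\log N)^d$ for every $d\ge1$; see \eqref{eq:justlog2}. So no wave-packet decomposition, Schur test or induction on scales can give a constant independent of the frequency support. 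Your last paragraph half-concedes this, but then what you have is not a proof strategy. The correct response to this statement is a counterexample.

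The step that fails is your assertion that two packets interact only when their frequencies are close, which is what a Schur test with row sums $\lesssim\sup\rho^{\ast}w$ needs. For an arbitrary weight this is false: $w$ may oscillate exactly at the space-time differences $(k-k',|k|^2-|k'|^2)$ of distant frequencies.

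The example of Section~\ref{sec:d1-log-weight} with $d=1$ makes this explicit. Take $u=\sum_{j=1}^m e^{2\pi i(2^jx+4^jt)}$ and $w=|f|^2$, where $f=\sum_{\#J=r}e^{2\pi i(X_Jx+Y_Jt)}$ and $r=\lfloor m/2\rfloor$. The $2$-adic argument gives $\rho^{\ast}_{\mathbb{Z}}w(x,2^q)=\binom mr$ for every $x$ and every tested velocity $2^q$. Now consider the interaction matrix $\bigl(\int_{\mathbb{T}^2}e^{2\pi i((2^j-2^{j'})x+(4^j-4^{j'})t)}w\bigr)_{j,j'}$. Its entries are nonnegative integers. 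Each off-diagonal entry is at least $\binom{m-2}{r-1}\approx\frac14\binom mr$, coming from the pairs $J=I\cup\{j'\}$, $J'=I\cup\{j\}$. Its row sums and its norm are therefore $\sim m\binom mr\sim\log N\cdot\sup\rho^{\ast}w$, with $N=2^m$. The thickening argument of Theorem~\ref{thm:quant-transference-intro} transfers this to $\mathbb{R}$.

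Several of your other expectations fail for the same reason.
\begin{itemize}
\item Your hope that $d=1$ can be handled on the Fourier side fails too.
\item The crux is not the arithmetic of rectangles and isosceles triangles in $d\ge2$. That mechanism gives only the weaker bound \eqref{eq:logtologloglog}, whereas the lacunary mechanism above works in every dimension, and Cairo's construction works on every non-flat $\mathrm{C}^2$ hypersurface.
\item There is no known bound with merely logarithmic loss for an induction on scales to refine. The best upper bounds on $\mathfrak{M}_{\mathbb{R}^d}(N)$ are of power type, $N^{d/(d+2)+\varepsilon}$, due to Carbery--Iliopoulou--Wang.
\end{itemize}
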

The reader may have noticed that the time localisations on the left-hand sides of \eqref{eq:euc-Stein} and \eqref{eq:euc-MT} do not feature in our Wigner-function heuristics. However, an elementary scaling and limiting argument reveals that \eqref{eq:euc-Stein} and \eqref{eq:euc-MT} are equivalent to the fully global and scale-invariant inequalities
\begin{equation}\label{eq:euc-Stein-glo}
\int_{\mathbb{R}}\int_{\mathbb{R}^d}|u(x,t)|^2w(x,t)\dd x\dd t
\le C\int_{\mathbb{R}^d}|\widehat{u_0}(v)|^2\sup_{x\in\mathbb{R}^d}\rho^{\ast}w(x,v)\dd v,
\end{equation}
and 
\begin{equation}\label{eq:euc-MT-glo}
\int_{\mathbb{R}}\int_{\mathbb{R}^d}|u(x,t)|^2w(x,t)\dd x\dd t
\le C\sup_{\substack{x\in\mathbb{R}^d\\ v\in\supp\widehat{u_0}}}\!\!\!\!\rho^{\ast}w(x,v)\ \|u_0\|_{\mathrm{L}^2(\mathbb{R}^d)}^2
\end{equation}
respectively.
Indeed, if one wishes one may also spatially localise the left-hand sides of \eqref{eq:euc-Stein} and \eqref{eq:euc-MT}, replacing $[0,1]\times\mathbb{R}^d$ with $[0,1]^{d+1}$, without penalty; one simply applies a parabolic scaling to such an estimate to deduce that
\[ \int_{[0,M]^{d}\times [0,M^2]}|u(x,t)|^2w(x,t)\dd x\dd t\leq C\int_{\mathbb{R}^d}|\widehat{u_0}(v)|^2\sup_{x\in\mathbb{R}^d}\rho^{\ast}w_M(x,Mv)\dd v, \]
uniformly in $M\geq 1$, where $w_M(x,t)=M^2w(Mx,M^2t)$, and notices that $\rho^{\ast}w_M(x,Mv)=\rho^{\ast}w(Mx,v)$.
Alternatively, it is straightforward to observe that one may harmlessly impose a frequency localisation in the global inequalities \eqref{eq:euc-Stein-glo} and \eqref{eq:euc-MT-glo}, restricting attention to initial data $u_0$ with Fourier support in the unit ball.

Clearly, \eqref{eq:euc-MT} is weaker than \eqref{eq:euc-Stein}.
These classical conjectures have often been formulated by replacing the underlying paraboloid $\{(\xi,|\xi|^2):\xi\in\mathbb{R}^d\}$ either with a more general hypersurface, or simply with the sphere. We refer to Sections 1 and 2 of \cite{BGNO} for some discussion on the origins of these problems and how the specific inequalities \eqref{eq:euc-Stein} and \eqref{eq:euc-MT} fit into this wider geometric framework. We note that one may pass back and forth between these parabolic forms and the more familiar spherical forms using parabolic scaling and tensoring arguments; see \cite{Ferrante}.

Very recently, these two claims were shown to be false by Cairo, who constructed a counterexample to the Mizohata--Takeuchi conjecture for every non-flat $\mathrm{C}^2$ hypersurface \cite{Cairo}. (We note that there is nonetheless some evidence in their favour; see for example \cite{Mul,BGNO} and the references there.)
It is then natural to seek to quantify the failure of these estimates. 
For $N\ge1$, define $\mathfrak{S}_{\mathbb{R}^d}(N)$ to be the least constant $C$ in \eqref{eq:euc-Stein} under the additional hypothesis
\[ \supp\widehat{u_0}\subseteq[-N,N]^d, \]
and define $\mathfrak{M}_{\mathbb{R}^d}(N)$ similarly using \eqref{eq:euc-MT}. Thus, the fact that estimates \eqref{eq:euc-Stein} and \eqref{eq:euc-MT} do not hold can be restated as 
\[ \sup_N\mathfrak{S}_{\mathbb{R}^d}(N)=\infty \quad\text{and}\quad \sup_N\mathfrak{M}_{\mathbb{R}^d}(N)=\infty. \]
Cairo's counterexample \cite{Cairo} actually gives $\mathfrak{S}_{\mathbb{R}^d}(N) \geq \mathfrak{M}_{\mathbb{R}^d}(N) \gtrsim \log N$, while Cairo and Zhang \cite{CairoZhang} (also see \cite{Fenves}) later obtained counterexamples with a power loss, $N^{d/(d+k)-\varepsilon}$, for certain families of $\mathrm{C}^k$ convex hypersurfaces in $\mathbb{R}^{d+1}$; we refer to \cite{Carbery09} for earlier indications of the relevance of combinatorial geometry to Mizohata--Takeuchi-type inequalities.
Complementary upper bounds of order $N^{d/(d+2)+\varepsilon}$ for all strictly convex $\mathrm{C}^2$ hypersurfaces were provided earlier by the work of Carbery, Iliopoulou and Wang \cite{CIW}; see also Guth \cite{Guthtalk}. 

\subsection{Weighted Strichartz estimates on \texorpdfstring{$\mathbb{T}^{d+1}$}{torus}}%\label{Sect:introStrich}
The phase-space point of view developed in \cite{BGNO} remains relevant, and recent exponential-sum work such as \cite{Yang} suggests that periodic and arithmetic models may deserve separate attention.

In order to formulate our periodic conjectures, we mimic the heuristics from the previous section, beginning with a Wigner transform on the torus,
\begin{equation*}%\label{perwig}
W_{\mathbb{T}^{d}}(g_1,g_2)(x,\kappa):=\int_{\mathbb{T}^d}g_1(x+y)\overline{g_2(x-y)} e^{-2\pi i \kappa\cdot y}\dd y; \quad (x,\kappa)\in\mathbb{T}^d\times\mathbb{Z}^d.
\end{equation*}
We note that our normalisation of $W_{\mathbb{T}^d}$ is slightly different from \eqref{eucwig}, since the factors $1/2$ are missing from the input functions.
We do this ``oversampling'' in order to avoid an inconvenient aliasing phenomenon (see \cite{CM}) that would otherwise arise in the frequency marginal, interfering with the desired periodic analogue of the phase-space representation \eqref{PSrepcont}. 
%This does, however, have the effect of interfering with the frequency marginal, as we will see in a moment.

As in the Euclidean setting, for a periodic Schr\"{o}dinger solution
\eqref{eq:per-solution}, the function
\[ f(x,\kappa,t) := W_{\mathbb{T}^d}\bigl(u(\cdot,t),u(\cdot,t)\bigr)(x,\kappa);\quad (x,\kappa,t)\in\mathbb{T}^d\times\mathbb{Z}^d\times\mathbb{T} \]
satisfies a kinetic transport equation: this time the discrete-velocity equation
\begin{equation*}%\label{kte}
\frac{\partial}{\partial t}f(x,\kappa,t) = \kappa\cdot\nabla_x f(x,\kappa,t).
\end{equation*}
Again, we note a difference between this equation and its Euclidean counterpart \eqref{kineuc}, arising from the slight difference in the definitions of the Euclidean and periodic Wigner distributions. 
From the differential equation, or directly, we obtain the following identity: if $u$ is the periodic Schr\"{o}dinger solution \eqref{eq:per-solution}, then
\[ W_{\mathbb{T}^d}(u(\cdot,t),u(\cdot,t))(x,\kappa)
=W_{\mathbb{T}^d}(u_0,u_0)(x+t\kappa,\kappa). \]
Using the marginal property
\[ \sum_{\kappa\in\mathbb{Z}^d}W_{\mathbb{T}^d}(g,g)(x,\kappa)=|g(x)|^2 \]
and Fubini's theorem,
this leads to the phase-space representation
\begin{equation}\label{discpsrep}
|u(x,t)|^2=\widetilde{\rho}_{\mathbb{Z}^{d}}W_{\mathbb{T}^d}(u_0,u_0)(x,t),
\end{equation}
where $\widetilde{\rho}_{\mathbb{Z}^d}$ is the discrete phase-space operator
\[ \widetilde{\rho}_{\mathbb{Z}^d}f_0(x,t):=\sum_{\kappa\in\mathbb{Z}^d}f_0(x+t\kappa,\kappa). \]
Observe that the adjoint of $\widetilde{\rho}_{\mathbb{Z}^d}$, defined on $1$-periodic space-time functions $w$ by
\begin{equation*}%\label{eq:per-xray}
\widetilde{\rho}^{\ast}_{\mathbb{Z}^d}w(x,\kappa)=\int_0^1w(x-t\kappa,t)\dd t,
\end{equation*}
is now a (normalised) geodesic X-ray transform on the space-time torus $\mathbb{T}^{d+1}$ with integer velocities.
In particular, for a weight function $w\geq0$ on $\mathbb{T}^d\times\mathbb{T}$, by \eqref{discpsrep} we have the phase-space integral representation
\[ \int_{\mathbb{T}} \int_{\mathbb{T}^d} |u(x,t)|^2w(x,t)\dd x\dd t
= \sum_{\kappa\in\mathbb{Z}^d}\int_{\mathbb{T}^d} W_{\mathbb{T}^d}(u_0,u_0)(x,\kappa)\widetilde{\rho}^{\ast}_{\mathbb{Z}^d}w(x,\kappa)\dd x. \]
This representation is the periodic analogue of the Euclidean phase-space identity mentioned in the previous subsection and used in \cite{BGNO}.
Noting the other marginal property,
\[ \int_{\mathbb{T}^d}W_{\mathbb{T}^d}(g,g)(x,\kappa)\dd x
= \begin{cases}
|\widehat{g}(\kappa/2)|^2 & \text{if } \kappa\in 2\mathbb{Z}^d, \\ 
0 & \text{if } \kappa\not\in 2\mathbb{Z}^d,
\end{cases} \]
one might tentatively formulate \emph{periodic} (or \emph{discrete}) \emph{Stein} and \emph{Mizohata--Takeuchi conjectures} as follows.
Denote
\[ \rho^{\ast}_{\mathbb{Z}^d}w(x,k) := \widetilde{\rho}^{\ast}_{\mathbb{Z}^d}w(x,2k) = \int_0^1 w(x-2tk,t)\dd t; \quad (x,k)\in\mathbb{T}^d\times\mathbb{Z}^d. \]

\begin{claim}[Periodic Stein-type estimate]%\label{Conj:PStein}
For each $d\geq 1$ there exists a constant $C$ such that
\begin{equation}\label{eq:per-Stein}
\int_{\mathbb{T}} \int_{\mathbb{T}^d} \left|u(x,t)\right|^2w(x,t)\dd x\dd t\leq C\sum_{k\in\mathbb{Z}^d}|\widehat{u_0}(k)|^2\sup_{x\in\mathbb{T}^d}\rho^{\ast}_{\mathbb{Z}^{d}}w(x,k),
\end{equation}
for all weight functions $w\geq0$ and suitable $u_0\in \mathrm{L}^2(\mathbb{T}^d)$. 
\end{claim}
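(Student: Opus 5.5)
The claim as stated cannot be proved: the plan is to show it is \emph{false} in dimension $d=2$, and to deduce from Theorem~\ref{thm:combinatorics} a quantitative lower bound on the best constant in \eqref{eq:per-Stein}. The idea is to choose the initial datum and the weight so that both sides of \eqref{eq:per-Stein} become exactly the counts in Question~\ref{quest:combinatorics}.

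Let $A\subseteq\mathbb{Z}^2$ be finite. Take $\widehat{u_0}=\mathbbm{1}_A$, so that
\[ u(x,t)=\sum_{k\in A}e^{2\pi i(x\cdot k+t|k|^2)}, \]
and take the self-dual weight $w=|u|^2$, which is a nonnegative trigonometric polynomial on $\mathbb{T}^3$.

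\textbf{Left-hand side.} The left-hand side is $\int_{\mathbb{T}^3}|u|^4$. By orthogonality it counts quadruples $(k_1,k_2,k_3,k_4)\in A^4$ with
\[ k_1+k_3=k_2+k_4,\qquad |k_1|^2+|k_3|^2=|k_2|^2+|k_4|^2 . \]
Substitute $k_3=k_2+k_4-k_1$ and expand. The quadratic condition becomes $(k_2-k_1)\cdot(k_4-k_1)=0$. Hence the left-hand side equals exactly $\#\Box(A)$.

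\textbf{Right-hand side.} Fix $k\in A$. Expanding $|u|^2$ gives
\[ \rho^\ast_{\mathbb{Z}^2}w(x,k)=\sum_{k_1,k_2\in A}e^{2\pi i(k_1-k_2)\cdot x}\int_0^1 e^{2\pi i t\,(|k_1|^2-|k_2|^2-2k\cdot(k_1-k_2))}\dd t . \]
The exponent is an integer, so the $t$-integral is the indicator of
\[ (k_1-k_2)\cdot(k_1+k_2-2k)=0, \]
which is precisely the isosceles condition defining $\triangle^k(A)$. Bounding each unimodular factor by $1$ gives
\[ \sup_x\rho^\ast_{\mathbb{Z}^2}w(x,k)\le\#\triangle^k(A), \]
with equality at $x=0$. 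Therefore the right-hand side equals $C\sum_{k\in A}\#\triangle^k(A)=C\,\#\triangle(A)$.

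\textbf{Conclusion.} Estimate \eqref{eq:per-Stein} would imply $\#\Box(A)\le C\,\#\triangle(A)$ for every finite $A$. This is inequality \eqref{boxtriangle}, which Theorem~\ref{thm:combinatorics} refutes. More precisely, restricting to $A\subseteq[-N,N]^2$, the best constant in \eqref{eq:per-Stein} for Fourier support in $[-N,N]^2$ is at least $\mathfrak{R}(N)\ge(\log N)^{c/\log\log\log N}$. The same weight also handles the periodic Mizohata--Takeuchi analogue. There $\|u_0\|_2^2=\#A$ and the supremum over $k\in A$ is $\max_k\#\triangle^k(A)$, so that estimate would give \eqref{pinnedboxtriangle}, and its failure is measured by $\mathfrak{R}^{\mathrm{pin}}(N)$.

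For $d\ge 3$ I would tensor the two-dimensional example with a single trivial frequency in the extra variables. Such a frequency contributes a factor $1$ to both sides, so the failure persists.

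\textbf{Main obstacle.} All of the difficulty lies in Theorem~\ref{thm:combinatorics}, which the argument takes as given: one must construct sets with many rectangles but few isosceles triangles. The reduction above is pure orthogonality. The only point needing care is matching the normalisation $\rho^\ast_{\mathbb{Z}^d}w(x,k)=\widetilde\rho^\ast_{\mathbb{Z}^d}w(x,2k)$, so that the resonance condition reads $2k\cdot(k_1-k_2)$ rather than $k\cdot(k_1-k_2)$.
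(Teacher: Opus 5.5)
Your disproof is correct and follows the paper's own route in Section~\ref{sec:config-lower}: the choice $\widehat{u_0}=\mathbbm{1}_A$, $w=|u_A|^2$ turns the two sides into $\#\Box(A)$ and $\#\triangle(A)$ (Lemma~\ref{lem:counts}), and Theorem~\ref{thm:combinatorics} then gives $\mathfrak{S}_{\mathbb{T}^2}(N)\ge\mathfrak{R}(N)\to\infty$ (Theorem~\ref{thm:config-lower-intro}). Your tensoring step for $d\ge3$ is also valid, but the paper refutes the estimate differently in every dimension: it uses the lacunary subset-sum weights of Theorem~\ref{thm:d1-log-intro}, which give the stronger bound $(\log N)^d$ and also cover $d=1$, where weights of the form $|u_A|^2$ provably cannot work.
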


\begin{claim}[Periodic Mizohata--Takeuchi estimate]%\label{Conj:PMT}
For each $d\geq 1$ there exists a constant $C$ such that
\begin{equation}\label{eq:per-MT}
\int_{\mathbb{T}} \int_{\mathbb{T}^d} \left|u(x,t)\right|^2w(x,t)\dd x\dd t\leq C \sup_{\substack{x\in\mathbb{T}^d\\ k\in\supp\widehat{u_0}}}\rho^{\ast}_{\mathbb{Z}^{d}}w(x,k) \,\|u_0\|_{\mathrm{L}^2(\mathbb{T}^d)}^2,
\end{equation}
for $w$ and $u_0$ as before. 
\end{claim}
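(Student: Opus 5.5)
As stated, the claim cannot be proved: I expect it to be false for $d\geq2$, with failure quantified through Theorem~\ref{thm:combinatorics}. The plan is therefore to test \eqref{eq:per-MT} on data and weights built from a finite set $A\subseteq\mathbb{Z}^2$. The aim is to show that the best constant $C$ in \eqref{eq:per-MT}, restricted to $\supp\widehat{u_0}\subseteq[-N,N]^2$, is at least $\mathfrak{R}^{\mathrm{pin}}(N)$. That quantity is unbounded by Theorem~\ref{thm:combinatorics}.

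\textbf{The data and the weight.} Take $d=2$, fix $A$ and set
\[
u_0(x)=\sum_{k\in A}e^{2\pi i k\cdot x},
\]
so that $\|u_0\|_{\mathrm{L}^2(\mathbb{T}^2)}^2=\#A$ and $\supp\widehat{u_0}=A$. Choose the weight $w=|u|^2\geq0$, which is $1$-periodic in space-time. Expanding
\[
|u(x,t)|^2=\sum_{k_1,k_2\in A}e^{2\pi i((k_1-k_2)\cdot x+(|k_1|^2-|k_2|^2)t)}
\]
and using orthogonality on $\mathbb{T}^3$, the left-hand side $\int|u|^4$ equals the number of quadruples in $A^4$ with
\[
k_1+k_3=k_2+k_4,\qquad |k_1|^2+|k_3|^2=|k_2|^2+|k_4|^2 .
\]
A parallelogram with equal sums of squares of opposite vertices has equal diagonals, so this is a rectangle. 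Equivalently, substitute $k_3=k_2+k_4-k_1$ and expand to get $(k_2-k_1)\cdot(k_4-k_1)=0$. Hence the left-hand side is exactly $\#\Box(A)$.

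\textbf{The X-ray side.} Compute
\[
\rho^{\ast}_{\mathbb{Z}^2}w(x,k)=\sum_{k_1,k_2\in A}e^{2\pi i(k_1-k_2)\cdot x}\int_0^1e^{2\pi i t(|k_1|^2-|k_2|^2-2k\cdot(k_1-k_2))}\dd t .
\]
The $t$-integral is $1$ when $(k_1+k_2-2k)\cdot(k_1-k_2)=0$ and vanishes otherwise. This is precisely the condition $(k_1,k_2,k)\in\triangle^k(A)$. By the triangle inequality, $\sup_x\rho^{\ast}_{\mathbb{Z}^2}w(x,k)\leq\#\triangle^k(A)$.

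\textbf{Conclusion and higher dimensions.} Putting the two computations together, \eqref{eq:per-MT} would force
\[
\#\Box(A)\leq C\,\#A\,\max_{k\in A}\#\triangle^k(A).
\]
Maximising over $A\subseteq[-N,N]^2$ gives $C\geq\mathfrak{R}^{\mathrm{pin}}(N)\geq(\log N)^{c/\log\log\log N}$, so no finite $C$ works. For $d>2$ I would embed $A$ in $\mathbb{Z}^2\times\{0\}$ and take $u_0,w$ independent of the extra variables; the same identities then hold verbatim. In $d=1$ this construction degenerates, so that case I would leave separately.

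\textbf{Main obstacle.} The analytic steps are routine orthogonality computations, and the only care needed is matching the factor $2$ in $\rho^{\ast}_{\mathbb{Z}^d}$ to the isosceles condition. The real difficulty is the combinatorial input, namely producing sets with many rectangles and few pinned isosceles triangles. That is supplied by Theorem~\ref{thm:combinatorics}, which is taken as given here.
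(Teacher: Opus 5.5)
Your refutation is correct and is exactly the paper's rectangle--triangle argument: Lemma~\ref{lem:counts} with the weight $w=|u_A|^2$ yields $\mathfrak{M}_{\mathbb{T}^2}(N)\geq\mathfrak{R}^{\mathrm{pin}}(N)$ as in Theorem~\ref{thm:config-lower-intro}, which combined with Theorem~\ref{thm:combinatorics} refutes the claim, and your embedding of $A$ into $\mathbb{Z}^2\times\{0\}^{d-2}$ validly carries this over to every $d\geq3$. You rightly observe that this construction cannot reach $d=1$ (the paper's closing remark shows the relevant ratios stay below $2$ there); the paper covers that case with a different, lacunary weight $w_{\mathbf{m}}=|f_{\mathbf{m}}|^2$ built from subset sums of the parabola points $(2^j,4^j)$, which gives failure in every dimension with the stronger rate $(\log N)^d$ (Theorem~\ref{thm:d1-log-intro}).
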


It will turn out that these two estimates do not hold either, just as their Euclidean counterparts do not -- a fact that we establish with Theorems~\ref{thm:config-lower-intro} and \ref{thm:d1-log-intro} below; see also Section~\ref{Sect:pertoEuc}. It is therefore interesting to quantify their failure as well.
%For a finite set of frequencies $A\subseteq\mathbb{Z}^d$ and a tuple of complex coefficients $a=(a_k)_{k\in A}$, write
%\[ \mathrm{E}_A a(x,t):=\sum_{k\in A} a_k e^{2\pi i(x\cdot k+t|k|^2)}; \quad (x,t)\in\mathbb{T}^d\times\mathbb{T}. \]
For $N\ge1$, let $\mathfrak{S}_{\mathbb{T}^d}(N)$ and $\mathfrak{M}_{\mathbb{T}^d}(N)$ respectively be the least constants $C$ such that the Stein-type estimate \eqref{eq:per-Stein} and the weaker Mizohata--Takeuchi-type estimate \eqref{eq:per-MT} hold for all nonnegative weights $w$ and all trigonometric polynomials $u_0\in\mathrm{L}^2(\mathbb{T}^d)$ such that
\[ \supp\widehat{u_0} \subseteq \mathbb{Z}^d \cap [-N,N]^d. \]
Similar numerical quantities can be defined by fixing the cardinality of the frequency support of $u_0$, which is a specific feature of the periodic setting. Namely, for a positive integer $n$, let $\widetilde{\mathfrak{S}}_{\mathbb{T}^d}(n)$ and $\widetilde{\mathfrak{M}}_{\mathbb{T}^d}(n)$ respectively be the least constants $C$ such that \eqref{eq:per-Stein} and \eqref{eq:per-MT} hold for weights $w\geq0$ and trigonometric polynomials $u_0\in\mathrm{L}^2(\mathbb{T}^d)$ such that
\[ \#\supp\widehat{u_0} = n. \]
Clearly,
\begin{equation}\label{eq:per-MT_vs_St}
\mathfrak{M}_{\mathbb{T}^d}(N) \leq \mathfrak{S}_{\mathbb{T}^d}(N) \quad\text{and}\quad \widetilde{\mathfrak{M}}_{\mathbb{T}^d}(n) \leq \widetilde{\mathfrak{S}}_{\mathbb{T}^d}(n).
\end{equation}

The following quantitative transference result from the local Euclidean model to the periodic model relates this subsection to the previous one.

\begin{theorem}\label{thm:quant-transference-intro}
For every $d\ge1$ and $N\ge1$ we have
\begin{equation}\label{eq:St-transference}
\mathfrak{S}_{\mathbb{T}^d}(N)\le \mathfrak{S}_{\mathbb{R}^d}(N+1),
\end{equation}
and
\begin{equation}\label{eq:MT-transference}
\mathfrak{M}_{\mathbb{T}^d}(N)\le \mathfrak{M}_{\mathbb{R}^d}(N+1).
\end{equation}
\end{theorem}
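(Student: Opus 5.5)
The plan is to prove both inequalities by a single approximation argument. Fix periodic data $u_0=\sum_{k\in A}a_k e^{2\pi i x\cdot k}$ with $A\subseteq\mathbb{Z}^d\cap[-N,N]^d$ and a weight $w\ge0$ on $\mathbb{T}^{d+1}$. From these we build Euclidean data whose frequency support lies in $[-N-1,N+1]^d$, then apply the Euclidean estimate \eqref{eq:euc-Stein} (respectively \eqref{eq:euc-MT}) and let a scale parameter $R\to\infty$.

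\emph{Step 1: reduction to spatially smooth weights.} Replace $w$ by $w_\varepsilon:=w*_x\varphi_\varepsilon$, a mollification in $x$ only. Then
\[
\rho^{\ast}_{\mathbb{Z}^d}w_\varepsilon(\cdot,k)=\rho^{\ast}_{\mathbb{Z}^d}w(\cdot,k)*\varphi_\varepsilon,
\]
so the suprema on the right-hand sides of \eqref{eq:per-Stein} and \eqref{eq:per-MT} do not increase. On the left-hand side, $|u|^2$ is a smooth trigonometric polynomial, so $\int|u|^2w_\varepsilon\to\int|u|^2w$ as $\varepsilon\to0$. Hence we may assume $w$ is smooth in $x$ (with $x$-derivatives in $\mathrm{L}^1_t$). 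Its periodic X-ray transform $\int_0^1 w(x-2tv,t)\dd t$ then makes sense for all real velocities $v$ and is continuous in $v$, uniformly in $x$.

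\emph{Step 2: the Euclidean test objects.} Fix a Schwartz function $\psi$ with $\widehat\psi$ supported in the unit ball. For $R\ge1$ set $g_R(x):=u_0(x)\psi(x/R)$; then $\widehat{g_R}(v)=\sum_k a_kR^d\widehat\psi(R(v-k))$ is supported in the $1/R$-neighbourhood of $A$, hence in $[-N-1,N+1]^d$. For the weight, take $W(x,t):=w(x,t)\mathbbm{1}_{[0,1]}(t)$ with $w$ extended periodically in $x$. Then
\[
\rho^{\ast}W(x,v)=\int_0^1 w(x-2tv,t)\dd t,
\]
which is exactly the periodic transform evaluated at the real velocity $v$, and $\sup_{x\in\mathbb{R}^d}=\sup_{x\in\mathbb{T}^d}$. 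Galilean invariance gives the Euclidean solution explicitly:
\[
u_R(x,t)=\sum_k a_k e^{2\pi i(x\cdot k+t|k|^2)}\,(e^{it\Delta}\psi_R)(x+2tk),\qquad \psi_R:=\psi(\cdot/R).
\]
For $t\in[0,1]$ and $|k|\lesssim N\ll R$ we have $(e^{it\Delta}\psi_R)(x+2tk)=\psi(x/R)+O(N/R)$, uniformly and with Schwartz tails. Hence $u_R(x,t)=u(x,t)\psi(x/R)+O(\|a\|_{\ell^1}N/R)$ in an appropriately weighted sense.

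\emph{Step 3: comparing both sides as $R\to\infty$.} On the left, we have $\int_0^1\int|u_R|^2W$. Since $|u|^2w$ is periodic in $x$ and $|\psi(\cdot/R)|^2$ varies slowly, this equals
\[
R^d\|\psi\|_2^2\int_{\mathbb{T}}\int_{\mathbb{T}^d}|u|^2w+o(R^d).
\]
On the right, for $R$ large the bumps $\widehat\psi(R(\cdot-k))$ have disjoint supports. So
\[
\int|\widehat{g_R}(v)|^2\sup_x\rho^{\ast}W(x,v)\dd v=R^d\|\psi\|_2^2\sum_k|a_k|^2\Bigl(\sup_x\rho^{\ast}_{\mathbb{Z}^d}w(x,k)+o(1)\Bigr),
\]
where the $o(1)$ comes from the continuity in $v$ established in Step 1. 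Similarly $\|g_R\|_2^2=R^d\|\psi\|_2^2\|u_0\|_2^2+o(R^d)$. In the Mizohata--Takeuchi case, $\supp\widehat{g_R}$ lies within $1/R$ of $A$, so the supremum over velocities again converges to the supremum over $k\in A$. Dividing by $R^d\|\psi\|_2^2$ and letting $R\to\infty$, then $\varepsilon\to0$, yields \eqref{eq:St-transference} and \eqref{eq:MT-transference}.

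The main obstacle I expect is making Step 3 rigorous on the left-hand side. The weight $W$ is merely integrable in $t$ and unbounded in $x$-extent, so the error term $u_R-u\psi(\cdot/R)$ must be controlled against $W$ in a way that is $o(R^d)$. I would handle this by splitting space into $|x|\le R^{1+\delta}$, where the uniform $O(N/R)$ error times $\|w\|_{\mathrm{L}^1(\mathbb{T}^{d+1})}R^{d(1+\delta)}$ must be made small by choosing $\delta$ small (or by using the sharper fact that the error is itself $O(N/R)\,\langle x/R\rangle^{-M}$), and the exterior region, which is handled by Schwartz decay. The spatial smoothing from Step 1 also lets us bound $w$ pointwise in $x$ for almost every $t$, which simplifies these estimates.
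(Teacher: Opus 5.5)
Your proposal is correct and is built the same way as the paper's proof. Your $g_R=u_0\,\psi(\cdot/R)$ is exactly the paper's $F_L=\sum_k a_k\widehat{\varphi_L}(\cdot-k)$ on the Fourier side. The Euclidean weight is the same $w\mathbbm{1}_{[0,1]}(t)$. The right-hand sides are handled identically: Plancherel, disjoint bumps around $k\in A$, and continuity in $v$ of $\sup_x\int_0^1 w(x-2tv,t)\dd t$.

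The differences are technical. You mollify in $x$ only and get Lipschitz continuity in $v$ from $|\nabla_x w_\varepsilon(x,t)|\le\|\nabla\varphi_\varepsilon\|_{\infty}\|w(\cdot,t)\|_{\mathrm{L}^1(\mathbb{T}^d)}$. The paper instead uses a space-time approximate identity to make $w$ continuous. Both work.

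More substantially, the paper never meets the left-hand-side obstacle you flag. For fixed $(x,t)\in\mathbb{T}^{d+1}$, the values $u_L(x+j,t)$, $j\in\mathbb{Z}^d$, are the Fourier coefficients on $[-\tfrac12,\tfrac12]^d$ of $\eta\mapsto\sum_k a_k e^{2\pi i(x\cdot(\eta+k)+t|\eta+k|^2)}\widehat{\varphi_L}(\eta)$. Parseval therefore writes $L^{-d}\sum_j|u_L(x+j,t)|^2$ exactly as an average of frequency-shifted copies of $|E_Aa(x,t)|^2$ against the approximate identity $L^{-d}|\widehat{\varphi_L}|^2$. This converges uniformly in $(x,t)$, so integrating against $w\in\mathrm{L}^1(\mathbb{T}^{d+1})$ is immediate, with no Galilean formula and no decay bookkeeping.

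Your Galilean route also closes, and your second option suffices without any spatial splitting. The ingredients are:
\begin{itemize}
\item the bound $|u_R-\psi(\cdot/R)u|\lesssim\|a\|_{\ell^1}(N/R)\langle x/R\rangle^{-M}$ for $t\in[0,1]$;
\item the periodisation estimate $\int_{\mathbb{R}^d}\langle x/R\rangle^{-M}w(x,t)\dd x\lesssim R^d\|w(\cdot,t)\|_{\mathrm{L}^1(\mathbb{T}^d)}$ for $M>d$;
\item Poisson summation, $\sum_j|\psi((x+j)/R)|^2=R^d\|\psi\|_2^2+O_M(R^{d-M})$.
\end{itemize}
Together these give the left-hand side with an explicit $O(N/R)$ relative error. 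Your version buys a quantitative wave-packet picture with a rate; the paper's buys a shorter, exact argument.
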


Consequently, every lower bound in the periodic Stein-type and Mizohata--Takeuchi estimates is also a ``local'' Euclidean lower bound.
In Section~\ref{sec:transference} we will show directly that \eqref{eq:euc-Stein} and \eqref{eq:per-Stein} (and also \eqref{eq:euc-MT} and \eqref{eq:per-MT}) are mutually equivalent, even though we now know that they are both false. However, the reverse implication will only be qualitative, unlike Theorem~\ref{thm:quant-transference-intro}.

\subsection{Consequences of the combinatorial problem}
We now explain how the rectangle--triangle comparison influences the local constants. The following theorem translates the configuration problem from the beginning of the introduction into lower bounds for these constants.

\begin{theorem}\label{thm:config-lower-intro}
In dimension $d=2$, for every $N\geq1$ and every positive integer $n$, we have
\begin{equation}\label{eq:Stein-config-lower-intro}
\mathfrak{S}_{\mathbb{T}^2}(N)\geq\mathfrak{R}(N), \quad \widetilde{\mathfrak{S}}_{\mathbb{T}^2}(n)\geq\widetilde{\mathfrak{R}}(n)
\end{equation}
and
\begin{equation}\label{eq:MT-config-lower-intro}
\mathfrak{M}_{\mathbb{T}^2}(N)\geq\mathfrak{R}^{\mathrm{pin}}(N), \quad \widetilde{\mathfrak{M}}_{\mathbb{T}^2}(n)\geq\widetilde{\mathfrak{R}}^{\mathrm{pin}}(n).
\end{equation}
\end{theorem}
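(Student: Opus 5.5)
The plan is to test the periodic estimates \eqref{eq:per-Stein} and \eqref{eq:per-MT} on a single natural pair: the indicator-coefficient initial datum of a finite set $A$, and the weight equal to the density of the corresponding solution. Fix a finite nonempty $A\subseteq\mathbb{Z}^2$, with $A\subseteq[-N,N]^2$ for the first bounds and $\#A=n$ for the second. Set $\widehat{u_0}=\mathbbm{1}_A$, so that
\[ u(x,t)=\sum_{k\in A}e^{2\pi i(x\cdot k+t|k|^2)}, \]
and choose $w:=|u|^2$. This $w$ is a nonnegative, $1$-periodic trigonometric polynomial on $\mathbb{T}^2\times\mathbb{T}$. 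Then $\|u_0\|_{\mathrm{L}^2}^2=\#A$ and $\supp\widehat{u_0}=A$. The theorem will follow once we show that the left-hand side equals $\#\Box(A)$ and that $\sup_x\rho^\ast_{\mathbb{Z}^2}w(x,k)=\#\triangle^k(A)$.

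\emph{Left-hand side.} Expanding $\int_{\mathbb{T}}\int_{\mathbb{T}^2}|u|^4$ by orthogonality counts quadruples $(k_1,k_2,k_3,k_4)\in A^4$ with
\[ k_1+k_3=k_2+k_4 \quad\text{and}\quad |k_1|^2+|k_3|^2=|k_2|^2+|k_4|^2. \]
We substitute $k_3=k_2+k_4-k_1$ into the second equation. It reduces to $2(k_2-k_1)\cdot(k_4-k_1)=0$ after expanding, which is exactly the rectangle condition. Hence the left-hand side equals $\#\Box(A)$.

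\emph{Right-hand side.} For each $k\in\mathbb{Z}^2$,
\[ \rho^\ast_{\mathbb{Z}^2}w(x,k)=\int_0^1|u(x-2tk,t)|^2\dd t=\sum_{k_1,k_2\in A}e^{2\pi i x\cdot(k_1-k_2)}\int_0^1 e^{2\pi i t\,(|k_1|^2-|k_2|^2-2k\cdot(k_1-k_2))}\dd t. \]
The phase in $t$ is an integer multiple of $2\pi t$, so each time integral is $1$ or $0$. Since $|k_1|^2-|k_2|^2-2k\cdot(k_1-k_2)=(k_1-k_2)\cdot(k_1+k_2-2k)$, it equals $1$ exactly when $(2k-k_1-k_2)\cdot(k_2-k_1)=0$. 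The surviving coefficients are all $1$, so the sum is bounded by its value at $x=0$. Therefore
\[ \sup_x\rho^\ast_{\mathbb{Z}^2}w(x,k)=\#\triangle^k(A)\quad\text{for } k\in A. \]

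\emph{Conclusion.} Summing over $k\in A$ with weights $|\widehat{u_0}(k)|^2=\mathbbm{1}_A(k)$, the right-hand side of \eqref{eq:per-Stein} is $C\,\#\triangle(A)$. Hence $\mathfrak{S}_{\mathbb{T}^2}\ge\#\Box(A)/\#\triangle(A)$. The right-hand side of \eqref{eq:per-MT} is $C\max_{k\in A}\#\triangle^k(A)\,\#A$, which gives the pinned ratio. Taking the maximum over admissible $A$, either $A\subseteq[-N,N]^2$ or $\#A=n$, yields \eqref{eq:Stein-config-lower-intro} and \eqref{eq:MT-config-lower-intro}.

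There is no serious obstacle; the argument is a direct computation. The only points needing care are the normalisation of $\rho^\ast_{\mathbb{Z}^2}$, in particular the factor $2$ in $2k$, which is what makes the resonance condition land exactly on the isosceles condition, and the observation that all coefficients are nonnegative so that the supremum is attained at $x=0$.
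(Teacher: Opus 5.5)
Your proposal is correct and follows the paper's proof essentially verbatim. The paper also tests \eqref{eq:per-Stein} and \eqref{eq:per-MT} on $u_A$ with $\widehat{u_0}=\mathbbm{1}_A$ and weight $w=|u_A|^2$. It identifies the left-hand side with $\#\Box(A)$ via the $\mathrm{L}^4$ expansion and $\sup_x\rho^\ast_{\mathbb{Z}^2}(|u_A|^2)(x,k)$ with $\#\triangle^k(A)$ (Lemma~\ref{lem:counts}), then takes suprema over the admissible sets $A$.
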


Combining Theorems~\ref{thm:combinatorics},~\ref{thm:quant-transference-intro} and~\ref{thm:config-lower-intro} gives the following lower bounds in dimension $d=2$: there is a constant $c>0$ such that, for all sufficiently large $N$,
\begin{equation}\label{eq:logtologloglog}
\mathfrak{S}_{\mathbb{T}^2}(N),\ \mathfrak{M}_{\mathbb{T}^2}(N),\ \mathfrak{S}_{\mathbb{R}^2}(N),\ \mathfrak{M}_{\mathbb{R}^2}(N) \ge (\log N)^{c/\log\log\log N}.
\end{equation}
These bounds are weaker than the logarithmic lower bound by Cairo mentioned earlier and revisited\footnote{Here we are glossing over a technical difference arising from the nature of the space-time truncations used in formulating the local problems; here our localisations are temporal, whereas in \cite{Cairo} they are space-time.} below, but they have a different feature: the examples on $\mathbb{T}^2\times\mathbb{T}$ are witnessed by explicit weights of the form $w=|u|^2$.
From Theorem~\ref{thm:config-lower-intro} one also gets nontrivial lower bounds on $\widetilde{\mathfrak{S}}_{\mathbb{T}^2}(n)$ and $\widetilde{\mathfrak{M}}_{\mathbb{T}^2}(n)$, but these will be significantly improved in \eqref{eq:justlinear} below.

The strongest lower bounds in the paper actually come from different trigonometric weights.
These are more in the spirit of the counterexample by Cairo \cite{Cairo}, although they have the virtue that they are of a purely combinatorial nature.
We discovered this construction by modifying the previously described approach, again with the help of ChatGPT 5.5 Pro.

\begin{theorem}\label{thm:d1-log-intro}
For every $d\ge1$ and all sufficiently large $N$,
\begin{equation}\label{eq:justlog1}
\mathfrak{S}_{\mathbb{T}^d}(N),\ \mathfrak{M}_{\mathbb{T}^d}(N)\gtrsim_d (\log N)^d.
\end{equation}
Also, for every $d\ge1$ and all sufficiently large positive integers $n$ we have
\begin{equation}\label{eq:justlinear}
\widetilde{\mathfrak{S}}_{\mathbb{T}^d}(n),\ \widetilde{\mathfrak{M}}_{\mathbb{T}^d}(n)\sim_d n.
\end{equation}
\end{theorem}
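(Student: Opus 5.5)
The statement has three parts: an upper bound $\widetilde{\mathfrak{S}}_{\mathbb{T}^d}(n)\lesssim n$, a matching lower bound for $\widetilde{\mathfrak{M}}_{\mathbb{T}^d}(n)$, and the bound \eqref{eq:justlog1}. By \eqref{eq:per-MT_vs_St} it suffices to prove the upper bound for $\widetilde{\mathfrak{S}}$ and the lower bounds for $\mathfrak{M}$. Only the upper bound is worked out below; both lower-bound constructions are uncertain, and I do not yet see how to get \eqref{eq:justlinear}.

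\emph{Upper bound.} Let $\#\supp\widehat{u_0}=n$. By Cauchy--Schwarz, $|u(x,t)|^2\le n\sum_k|\widehat{u_0}(k)|^2$ pointwise. For every $k\in\mathbb{Z}^d$, Fubini and translation invariance on $\mathbb{T}^d$ give
\[ \int_{\mathbb{T}}\int_{\mathbb{T}^d}w\dd x\dd t=\int_{\mathbb{T}^d}\rho^{\ast}_{\mathbb{Z}^d}w(x,k)\dd x\le\sup_{x}\rho^{\ast}_{\mathbb{Z}^d}w(x,k). \]
Multiplying these and distributing the sum over $k$ gives $\int\!\!\int|u|^2w\le n\sum_k|\widehat{u_0}(k)|^2\sup_x\rho^{\ast}_{\mathbb{Z}^d}w(x,k)$. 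Hence $\widetilde{\mathfrak{M}}_{\mathbb{T}^d}(n)\le\widetilde{\mathfrak{S}}_{\mathbb{T}^d}(n)\le n$.

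\emph{Lower bounds.} The general aim is a weight supported where $|u|^2$ constructively refocuses, but which no single line of integer velocity $2k$, $k\in\supp\widehat{u_0}$, meets often. Testing $w=|u|^2$ gives exactly the lower bounds of Theorem~\ref{thm:config-lower-intro}, which are too weak, so the weight has to be chosen differently.

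For \eqref{eq:justlog1} with $d=1$, I would take $\widehat{u_0}=\mathbbm{1}_{[1,N]}$. At rational times $t=p/q$ with $q\le Q\approx N$ and suitable parity, Gauss sums (the Talbot effect) make $|u|^2\approx N^2/q$ on parabolic boxes of size $N^{-1}\times N^{-2}$ around the $q$ points $x=a/q$. Put $w=\sum_q c_q\mathbbm{1}_{B_q}$, where $B_q$ is the union of these boxes over $a$ and $p$.
\begin{itemize}
\item The left-hand side is roughly $\sum_q c_q\varphi(q)\,q\,(N^2/q)N^{-3}$.
\item A line $x=x_0-2kt$ meets a box at $t=p/q$ only if $x_0$ is within $N^{-1}$ of a multiple of $1/q$ (after the shift by $2kp/q$). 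Summing over $q$ then involves a sieve and averaging over $x_0$.
\end{itemize}
Unweighted, both sides are about $Q^2/N$, so the ratio is only $O(1)$. I would choose $c_q\approx 1/q$ and use the divisor structure of the denominators a line can meet. The hope is that lines through a point with small denominator hit a thinner share of the denominators, so the ratio gains $\sum_{q\le N}1/q\approx\log N$. I am not sure this sieve actually gives the gain.

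For general $d$, I would use product data $\prod_i\mathbbm{1}_{[1,N]}(k_i)$ and a weight adapted to common rational times, aiming at $(\log N)^d$. Tensorisation is not automatic because the time variable is shared, so the $d$ one-dimensional gains must come from independent denominators in the separate coordinates.

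For the linear lower bound in \eqref{eq:justlinear}, I would look for $n$ frequencies $k_j$ in general position such that $|u|^2\approx n^2$ on a set whose one-dimensional sections along lines of velocity $2k_j$ are smaller by a factor of about $n$. A naive Kronecker-type count of such a set loses this factor, so this construction is still missing.

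\emph{Main obstacle.} The hardest step is controlling $\sup_x\rho^{\ast}_{\mathbb{Z}^d}w$. The difficulty lies in the worst-placed lines, such as those through the origin, which meet rational points of every denominator, and this is precisely where the gain has to come from.
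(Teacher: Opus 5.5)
Your upper bound $\widetilde{\mathfrak{M}}_{\mathbb{T}^d}(n)\le\widetilde{\mathfrak{S}}_{\mathbb{T}^d}(n)\le n$ is correct and is exactly the paper's argument: pointwise Cauchy--Schwarz combined with $\int_{\mathbb{T}^d}\rho^{\ast}_{\mathbb{Z}^d}w(x,k)\dd x=\int_{\mathbb{T}^{d+1}}w$. The lower bounds, however, are the real content of the theorem, and they are not proved.

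The Talbot-effect plan for \eqref{eq:justlog1} stops exactly where it would have to work. By your own count the unweighted ratio is $O(1)$. The gain from $c_q\approx1/q$ is only hoped for. Nothing in the plan controls $\sup_x\rho^{\ast}_{\mathbb{Z}}w(x,k)$ along the badly placed lines you identify. The tensorised version inherits this and also has the shared-time problem. For $\widetilde{\mathfrak{M}}_{\mathbb{T}^d}(n)\gtrsim n$ there is no construction at all. So the proposal establishes only the upper bounds in \eqref{eq:justlinear}.

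The missing idea is to choose the weight so that the supremum in $x$ becomes trivial. Let $w=|f|^2$, where $f$ is a trigonometric polynomial with distinct space-time frequencies $(X_J,Y_J)$. The $(J,J')$ cross term of $\rho^{\ast}_{\mathbb{Z}^d}w(x,k)$ survives the $t$-integration only if $Y_J-Y_{J'}=2k\cdot(X_J-X_{J'})$. If this never happens for $J\ne J'$ and $k\in A=\supp\widehat{u_0}$, then $\rho^{\ast}_{\mathbb{Z}^d}w(x,k)=\|f\|_{\mathrm{L}^2}^2=\int w$ for every $x$. Your averaging bound is then attained with equality, and the ratio reduces to $\|uf\|_{\mathrm{L}^2}^2/(\#A\,\|f\|_{\mathrm{L}^2}^2)$.

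The paper realises this in $d=1$ as follows:
\begin{itemize}
\item Take lacunary data $A=\{2,4,\ldots,2^m\}$ with all coefficients $1$, and $f=\sum_{\#J=r}e^{2\pi i(X_Jx+Y_Jt)}$, where $X_J=\sum_{j\in J}2^j$, $Y_J=\sum_{j\in J}4^j$ and $r=\lfloor m/2\rfloor$.
\item At $k=2^q$ the resonance condition reads $\sum_jc_j(4^j-2^{j+q+1})=0$, with $c_j=\mathbbm{1}_J(j)-\mathbbm{1}_{J'}(j)\in\{-1,0,1\}$ and $\sum_jc_j=0$. The $j=q+1$ term vanishes. The other terms have pairwise distinct $2$-adic valuations, namely $2j$ for $j\le q$ and $j+q+1$ for $j\ge q+2$. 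This forces $c_j=0$ for $j\ne q+1$, and then $\sum_jc_j=0$ kills $c_{q+1}$. Hence $\rho^{\ast}_{\mathbb{Z}}w\equiv\binom{m}{r}$.
\item The product $uf$ has coefficient at least $r+1$ at each of the $\binom{m}{r+1}$ frequencies $(X_J,Y_J)$ with $\#J=r+1$. This gives $\int|u|^2w\ge(r+1)^2\binom{m}{r+1}$ and a ratio $\ge(r+1)(m-r)/m\gtrsim m$.
\end{itemize}
Since $\#A=m$ and $A\subseteq[0,2^m]$, this single example gives both $\widetilde{\mathfrak{M}}_{\mathbb{T}^1}(n)\gtrsim n$ and $\mathfrak{M}_{\mathbb{T}^1}(N)\gtrsim\log N$, with no Gauss sums or sieving.

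Your shared-time concern in higher dimensions is handled by disjoint dyadic blocks. Take $k_l\in\{2^{M_l+1},\ldots,2^{M_l+m_l}\}$ with $M_l=m_1+\cdots+m_{l-1}$, set $X_{l,J}=\sum_{j\in J}2^{M_l+j}$ and $Y_{l,J}=\sum_{j\in J}4^{M_l+j}$, and let $f=\prod_l\sum_{\#J_l=r_l}e^{2\pi i(X_{l,J_l}x_l+Y_{l,J_l}t)}$. The single resonance equation is then a sum of terms $2^{2M_l}(4^j-2^{j+q_l+1})$ over all pairs $(l,j)$, and these still have pairwise distinct valuations. The same argument gives a ratio $\ge\prod_l(r_l+1)(m_l-r_l)/m_l$. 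Choosing $m_l\approx(\log_2N)/d$ gives $(\log N)^d$, and choosing $\mathbf{m}=(n,1,\ldots,1)$ gives $\gtrsim n$.
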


The proof of Theorem~\ref{thm:d1-log-intro} will use nonnegative lacunary trigonometric polynomial weights, rather than weights of the form $|u|^2$. 

As an immediate consequence of Theorems~\ref{thm:quant-transference-intro} and~\ref{thm:d1-log-intro}, for every $d\ge1$ and all sufficiently large $N$,
\begin{equation}\label{eq:justlog2}
\mathfrak{S}_{\mathbb{R}^d}(N),\ \mathfrak{M}_{\mathbb{R}^d}(N)\gtrsim_d (\log N)^d.
\end{equation}
In words, both the sharp periodic and the sharp Euclidean constants have at least polylogarithmic growth in every dimension. As we have said, the $\log N$ lower bound has already been observed by Cairo, and here we improve it slightly to $(\log N)^d$, largely because the paraboloid is more symmetric than a general surface.
Conversely, the only known upper bounds on $\mathfrak{M}_{\mathbb{R}^d}(N)$ are still of power type in $N$; see \cite{CIW}.

The quantities $\widetilde{\mathfrak{S}}_{\mathbb{T}^d}(n)$ and $\widetilde{\mathfrak{M}}_{\mathbb{T}^d}(n)$ are less subtle, and we observe that Theorem~\ref{thm:d1-log-intro} determines their growth very precisely. These are, however, specific to the periodic setting (where we have discrete frequencies) and do not have analogues in the Euclidean model.

\subsection{Summary, notation and paper organisation}
In light of the existing counterexamples to the Stein-type and Mizohata--Takeuchi conjectures \eqref{eq:euc-Stein} and \eqref{eq:euc-MT}, the contributions of this paper are:
\begin{itemize}
\item the identification of the periodic analogues of these conjectures;
\item a transference principle for lower bounds from the periodic setting to the Euclidean one;
\item a reduction of lower bounds for best constants to solvable geometric problems; and
\item solutions of these problems with tools from combinatorics and number theory.
\end{itemize}
Unlike the general counterexamples of Cairo \cite{Cairo}, Cairo and Zhang \cite{CairoZhang}, and Fenves \cite{Fenves}, the rectangle--triangle construction is not designed to give universal examples or examples with power loss, but rather aims to explain the failure of \eqref{eq:euc-Stein} and \eqref{eq:euc-MT} in an elementary way.

For nonnegative quantities $X$ and $Y$, the estimate $X\lesssim Y$ means that $X\leq CY$ for a constant $C$ independent of the variables being quantified in the relevant statement. Similarly, $X\gtrsim Y$ means $Y\lesssim X$, and $X\sim Y$ means that both $X\lesssim Y$ and $Y\lesssim X$ hold. If a subscript is attached, as in $\lesssim_d$, $\gtrsim_d$ or $\sim_d$, then the implicit constant is allowed to depend on the subscripted parameters, and on no other varying quantities. 

The notation $X=O(Y)$, or simply $O(Y)$ inside an expression, means that a real quantity $X$ is bounded in absolute value by $CY$ for an implicit constant $C$. A subscript has the same meaning as above: for example, $O_{\alpha,\beta,\gamma}(Y)$ denotes an error whose absolute value is at most $C_{\alpha,\beta,\gamma}Y$, where the constant may depend on $\alpha,\beta,\gamma$ but is independent of the other variables under consideration.

We use $\#E$ for the cardinality of a finite set $E$, and $\supp f$ for the support of a function or sequence $f$. We write $\mathbb{T}^d := \mathbb{R}^d/\mathbb{Z}^d$ for the $d$-dimensional torus, $\mathbb{F}_p$ for the field of $p$ elements, and $\mathbb{Z}_q := \mathbb{Z}/q\mathbb{Z}$ for the cyclic group of order $q\geq1$. The notation $\norm{x}$ denotes the $\ell^\infty$ norm of a vector $x\in\mathbb{R}^d$.

The rest of the paper is organised as follows. Section~\ref{sec:rectangle-triangle} proves Theorem~\ref{thm:combinatorics} by constructing finite lattice sets with many rectangles and comparatively few isosceles triangles. Section~\ref{sec:transference} proves the quantitative Euclidean-to-periodic transference Theorem~\ref{thm:quant-transference-intro} and its purely qualitative converse. Section~\ref{sec:config-lower} translates the rectangle--triangle counts into lower bounds for the periodic constants $\mathfrak{S}_{\mathbb{T}^2}(N)$ and $\mathfrak{M}_{\mathbb{T}^2}(N)$ in the form of Theorem~\ref{thm:config-lower-intro}. Finally, Section~\ref{sec:d1-log-weight} alters the canonical choice of weight $|u|^2$ to further improve the lower bounds on $\mathfrak{S}_{\mathbb{T}^d}(N)$ and $\mathfrak{M}_{\mathbb{T}^d}(N)$ in any ambient dimension $d$ and establishes Theorem~\ref{thm:d1-log-intro}.

%%%%

\section{The rectangle--triangle construction}
\label{sec:rectangle-triangle}

In this section we answer Question~\ref{quest:combinatorics} in the negative and prove Theorem~\ref{thm:combinatorics}.

For a prime $p\equiv3\pmod4$, consider the finite-field ``unit circle'' defined as
\begin{equation*}%\label{eq:Bp_circle}
B_p:=\{(\alpha,\beta)\in\mathbb{F}_p^2 \,:\, \alpha^2+\beta^2=1\};
\end{equation*}
see Figure~\ref{fig:unit_circle}.
If $P$ is a nonempty finite set of primes congruent to $3$ modulo $4$, put $q=\prod_{p\in P}p$.
Using the Chinese remainder theorem, the group $\mathbb{Z}_q$ can be identified with the finite group product $\prod_{p\in P}\mathbb{F}_p$. Under the same identification, define $B_q\subseteq\mathbb{Z}_q^2$ by
\[ x\in B_q \quad\Longleftrightarrow\quad
x\bmod p\in B_p\text{ for every }p\in P. \]
The lattice set used in the proof is then defined as
\[ A_N(q) := \{k\in\mathbb{Z}^2\cap[-N,N]^2 \,:\, k\bmod q\in B_q\}, \]
or, more explicitly,
\[ A_N(q) = \{ (\alpha,\beta)\in\mathbb{Z}^2\cap[-N,N]^2 \,:\, \alpha^2+\beta^2 \equiv 1 \!\!\!\pmod q \}. \]
A special case is illustrated in Figure~\ref{fig:rec_tri_set_A}. We will, of course, need to consider arbitrarily large $q$ and $N$.

\begin{figure}
\begin{center}
\begin{tikzpicture}[scale=0.4]
    \draw[step=1cm, gray!50, thin] (0,0) grid (18,18);
    \draw[thick, ->] (0,0) -- (19,0) node[anchor=north west] {$\alpha$};
    \draw[thick, ->] (0,0) -- (0,19) node[anchor=south east] {$\beta$};
    \foreach \i in {0, 2, 4, 6, 8, 10, 12, 14, 16, 18} {
        \draw (\i, 3pt) -- (\i, -3pt) node[anchor=north, font=\scriptsize] {\i};
        \draw (3pt, \i) -- (-3pt, \i) node[anchor=east, font=\scriptsize] {\i};
    }
    \foreach \i in {1, 3, 5, 7, 9, 11, 13, 15, 17} {
        \draw (\i, 2pt) -- (\i, -2pt);
        \draw (2pt, \i) -- (-2pt, \i);
    }
    \foreach \x/\y in {
        0/1, 0/18, 
        1/0, 18/0, 
        2/4, 2/15, 17/4, 17/15, 
        4/2, 4/17, 15/2, 15/17, 
        3/7, 3/12, 16/7, 16/12, 
        7/3, 7/16, 12/3, 12/16%
    } {
        \fill[black] (\x, \y) circle (0.15);
    }
\end{tikzpicture}
\end{center}
\caption{The unit circle $B_{19}$ in $\mathbb{F}_{19}^2$.}
\label{fig:unit_circle}
\end{figure}

\begin{figure}
\begin{center}
\hspace*{7mm}\begin{tikzpicture}[scale=0.145]
    \def\N{50} % Bounding box [-N,N]^2
    \def\q{21} % Product of primes p = 3 (mod 4)
    \draw[step=1, gray!40, thin] (-\N.5, -\N.5) grid (\N.5, \N.5);
    \draw[->, thick, darkgray] (-\N.5, 0) -- (\N.5+0.5, 0) node[right] {$x$};
    \draw[->, thick, darkgray] (0, -\N.5) -- (0, \N.5+0.5) node[above] {$y$};
    \draw[thick, dashed, gray] (-\N, -\N) rectangle (\N, \N);
    \foreach \x in {-\N,...,\N} {
        \foreach \y in {-\N,...,\N} {
            \pgfmathparse{int(mod((\x)*(\x) + (\y)*(\y), \q))}
            \ifnum\pgfmathresult=1
                \fill[black] (\x, \y) circle (0.3);
            \fi
        }
    }
\end{tikzpicture}
\end{center}
\caption{The set $A_{50}(21)$. Here we have $P=\{3,7\}$.}
\label{fig:rec_tri_set_A}
\end{figure}

For a nonempty finite $A\subseteq\mathbb{Z}^2$, write
\[ \mathfrak{Q}(A):=\frac{\#\Box(A)}{\#\triangle(A)},
\quad \mathfrak{Q}^{\mathrm{pin}}(A):=
\frac{\#\Box(A)}{\#A\,\max_{k\in A}\limits \#\triangle^k(A)}. \]

The following proposition is a more concrete variant of Theorem~\ref{thm:combinatorics}.

\begin{proposition}\label{prop:fixedmodulus}
There exist absolute constants $c_0$ and $C_0$ such that the following holds. Let $P\neq\emptyset$ be a finite set of primes congruent to $3$ modulo $4$. Define $q=\prod_{p\in P}p$. If
\begin{equation}\label{eq:fixeducondition}
N\geq e^{C_0 q^2},
\end{equation}
then
\begin{equation}\label{eq:fixeduniform}
\mathfrak{Q}(A_N(q)),\ \mathfrak{Q}^{\mathrm{pin}}(A_N(q))
\geq c_0 \Bigl(\frac{4}{3}\Bigr)^{\#P}.
\end{equation}
\end{proposition}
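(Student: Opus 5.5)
The plan is to count $\#\Box(A_N(q))$ and $\#\triangle(A_N(q))$ asymptotically. Both counts should have a main term of size $N^4\log N$ times a local density that factorises over $p\in P$. The per-prime ratio of the two densities should then produce the factor $\frac43$. Since $p\equiv3\pmod4$, $-1$ is a nonresidue, so $\#B_p=p+1$ and $\#A_N(q)\sim (2N)^2\prod_{p\in P}(p+1)/p^2$.

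For rectangles, I would parametrise $k_2=k_1+mv$ and $k_4=k_1+nv^\perp$, where $v$ ranges over primitive vectors up to sign and $m,n\in\mathbb{Z}$. The configurations with $m=0$ or $n=0$ contain all the degenerate ones and contribute only $O(N^4)$ together, so they are negligible against the main term. For fixed $v$, the quadruple $(k_1,m,n)$ ranges over a region of volume $\asymp N^4/|v|^2$, subject to a congruence condition modulo $q$ that depends only on $v\bmod q$. That condition is that $k_1$, $k_1+mv$, $k_1+nv^\perp$ and $k_1+mv+nv^\perp$ all reduce into $B_q$.

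The triangles are handled the same way. Write $k_2-k_1=mv$; the apex then satisfies $2k=k_1+k_2+ s v^\perp$ with a parity condition, which is harmless modulo odd $q$. For $q\le |v|\le N$, splitting $k_1,m,n$ into residue classes modulo $q$ and counting lattice points in boxes gives each such $v$ a contribution $\frac{cN^4}{|v|^2}\delta_\Box(v\bmod q)(1+O(q|v|/N))$. Summing over primitive $v$ equidistributed modulo $q$ then gives
\[ \#\Box(A_N(q)) = c N^4\log N\;\mathbb{E}_{v}\,\delta_\Box(v)\;+\;O(q^{O(1)}N^4), \]
where $\delta_\Box(v)=\prod_p\delta_{\Box,p}(v\bmod p)$ by the Chinese remainder theorem. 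The analogous formula should hold for triangles with densities $\delta_{\triangle,p}$.

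The heart of the proof is the finite-field computation of $\mathbb{E}_v\delta_{\Box,p}$ and $\mathbb{E}_v\delta_{\triangle,p}$. On the circle $B_p$, the nondegenerate structure is rigid. A rectangle inscribed in $B_p$ has diagonals through the origin, that is $k_3=-k_1$ and $k_4=-k_2$. Separately, for fixed $k_1\ne k_2$ the perpendicular bisector is a line through the origin, so it meets $B_p$ in at most two points. I would count exactly, over $(k_1,m,n)\in\mathbb{F}_p^2\times\mathbb{F}_p^2$ with $v$ averaged over $\mathbb{F}_p^2\setminus\{0\}$, including the solutions with $m$ or $n$ zero modulo $p$. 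I expect the rectangle count to be about $4(p+1)^2$, coming from each pair of antipodal pairs read in its orderings plus the partially degenerate modular solutions. I expect the triangle count to be about $3(p+1)^2$, coming from two apex points plus the $k_1=k_2$ class. The aim is to show that the ratio per prime is at least $\frac43$ exactly, rather than $\frac43(1-O(1/p))$. A loss of the second kind would not be uniformly bounded over $P$, since $\prod_p(1-C/p)$ can tend to zero. If an exact inequality fails, I would instead restrict to large primes or enlarge $c_0$ via a convergent product; the latter requires the error per prime to be $O(1/p^2)$. With the local ratio in hand, multiplicativity gives $\mathfrak{Q}(A_N(q))\ge c_0(\frac43)^{\#P}$, provided the main terms dominate.

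The dominance of the main terms is where \eqref{eq:fixeducondition} enters. The $\log N$ comes from $\sum_{|v|\le N}|v|^{-2}$, but equidistribution of $v\bmod q$ and the box-counting errors only take effect once $|v|\gg q$. Moreover, the densities are as small as roughly $q^{-O(1)}$ relative to trivial bounds, so the errors $O(q^{O(1)}N^4)$ and the range $|v|\le q$ must be beaten by $N^4\log N\cdot\prod_p p^{-O(1)}$. This requires $\log N\ge q^{O(1)}$, and I would aim to make $C_0q^2$ suffice. For the pinned quantity, I would show that $\#\triangle^k(A_N(q))$ is essentially independent of $k$ up to constants. The group of rotations $\{(a,b):a^2+b^2=1\}$ acts transitively on $B_p$ and preserves the triangle condition modulo $p$, and the archimedean factor for $k\in[-N,N]^2$ is bounded above by the factor at the centre. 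Hence $\max_k\#\triangle^k\lesssim \#\triangle/\#A$, and the same ratio bound follows for $\mathfrak{Q}^{\mathrm{pin}}$. I expect the exact local count, and its uniformity in $p$, to be the main obstacle.
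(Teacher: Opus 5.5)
Your architecture matches the paper's. You parametrise configurations by a direction $w$ and two scalars, compute a local density over $\mathbb{F}_p$ with $w$ running over $\mathbb{F}_p^2\setminus\{0\}$ (primitive vectors never vanish mod $p$), multiply over $p\in P$ by the Chinese remainder theorem, and lift to the lattice. There each class of primitive $w$ modulo $q$ contributes $\asymp q^{-2}\log N$ through $\sum\lVert w\rVert^{-2}$.

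\textbf{The gap.} The step that actually produces $(4/3)^{\#P}$ is only conjectured. You ``expect'' totals of about $4(p+1)^2$ and $3(p+1)^2$, but these are not the true values: with $v$ averaged over $\mathbb{F}_p^2\setminus\{0\}$ as you propose, the counts are exactly $4p$ and $3p$. You also leave open that the per-prime ratio might only be $\frac43(1-O(1/p))$. Your fallback of restricting to large primes would not prove the proposition for an arbitrary $P$ with absolute constants. So the proposal is incomplete precisely at its crux.

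A related, secondary gap concerns the threshold $e^{C_0q^2}$. It requires errors proportional to the number of admissible residue classes, namely $O(\#E\,N^4/q^4)$ against a main term $\asymp\#E\,q^{-6}N^4\log N$. An absolute error $O(q^{O(1)}N^4)$ does not suffice. The relative error $O(q^2/\log N)$ comes from comparing the $O(1)$ contributed by the smallest representatives of a class of primitive $w$ (your range $\lVert v\rVert\le q$) with the main term $\eta_q\log N$, where $\eta_q\asymp q^{-2}$.

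\textbf{The missing computation.} It is short and exact.
\begin{itemize}
\item \emph{Normalisation.} For $x\in B_p$, the map $\Phi_x(y)=(y\cdot x,\,y\cdot Jx)$ preserves the dot product, commutes with $J$ and sends $x$ to $e=(1,0)$. So one may take $x=e$ and $z=\Phi_x(w)=(a,b)\neq0$, with $z\cdot z\neq0$ because $-1$ is a nonresidue.
\item \emph{Rectangles.} The side conditions read $r(2a+r\,z\cdot z)=0$ and $s(-2b+s\,z\cdot z)=0$. The fourth vertex is automatic: since $z\cdot Jz=0$, the quantity $|e+rz+sJz|^2-1$ is the sum of the two side defects. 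This gives $(1+\mathbbm{1}_{a\neq0})(1+\mathbbm{1}_{b\neq0})$ choices, hence $4p(p-1)$ per $x$.
\item \emph{Triangles.} The case $r=0$ gives $1+\mathbbm{1}_{b\neq0}$ choices of $s$. The root $r=-2a/(z\cdot z)$, available only if $a\neq0$, leaves a quadratic in $s$ with discriminant $16(a^2+b^2)$, hence $1+\chi(a^2+b^2)$ choices. Summing with $\sum_t\chi(1+t^2)=-1$ gives $(p^2-1)+p(p-1)+(p-1)^2=3p(p-1)$ per $x$.
\end{itemize}
Thus the local ratio is exactly $4/3$, with totals $4(p-1)p(p+1)$ and $3(p-1)p(p+1)$. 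These totals include the fully degenerate class $r\equiv s\equiv0$, which is counted once for every $w$ and must be kept.

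\textbf{The pinned quantity.} You only need a uniform upper bound, not near-independence in $k$. Every apex in $[-N,N]^2$ forces $\lVert rw\rVert_\infty\le2N$ and $\lVert sw\rVert_\infty\le4N$. Rotation by $u\in B_q$ (the matrix with columns $u,Ju$) shows that $\#\triangle^{k_0}_{\mathbb{Z}_q}(B_q)=\#\triangle_{\mathbb{Z}_q}(B_q)/\#B_q$ for every $k_0\in B_q$. Your route via $\max_k\#\triangle^k\lesssim\#\triangle/\#A$ would additionally need a lower bound for $\#\triangle(A_N(q))$, parity constraint included, which is unnecessary.
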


Note that Proposition~\ref{prop:fixedmodulus} already gives a negative answer to Question~\ref{quest:combinatorics}, simply by taking arbitrarily large sets $P$ of primes congruent to $3$ modulo $4$. This is certainly possible by Dirichlet's theorem on primes in arithmetic progressions. Both quantities in \eqref{eq:fixeduniform} become arbitrarily large as soon as $N$ is sufficiently large that \eqref{eq:fixeducondition} holds.

The rest of the section proves Proposition~\ref{prop:fixedmodulus} and then derives Theorem~\ref{thm:combinatorics}.

\subsection{Finite-field and cyclic-group constructions}
Write $J(\alpha,\beta):=(-\beta,\alpha)$. We use the same symbol on $\mathbb{Z}^2$, $\mathbb{F}_p^2$ and $\mathbb{Z}_q^2$. Likewise, $\cdot$ denotes the standard dot product on $\mathbb{Z}^2$ and the analogous bi-additive form on the finite rings.
It will be useful to rewrite $B_p$ as
\[ B_p = \{x\in\mathbb{F}_p^2 \,:\, x\cdot x=1\}. \]

For $S\subseteq\mathbb{F}_p^2$, where $p$ is an odd prime, define
\begin{align*}
\Box_{\mathbb{F}_p}(S):=\{(x,w,r,s) \,:\ & x\in\mathbb{F}_p^2,
\ w\in \mathbb{F}_p^2\setminus\{(0,0)\},\ r,s\in\mathbb{F}_p,\\
&x,\ x+rw,\ x+sJw,\ x+rw+sJw\in S\},\\
\triangle_{\mathbb{F}_p}(S):=\{(x,w,r,s) \,:\ & x\in\mathbb{F}_p^2,
\ w\in \mathbb{F}_p^2\setminus\{(0,0)\},\ r,s\in\mathbb{F}_p,\\
&x,\ x+rw,\ x+(rw+sJw)/2\in S\}.
\end{align*}
Here $1/2$ denotes the inverse of $2$ in $\mathbb{F}_p$.
The parametrisation clearly overcounts: different choices of $w,r,s$ can give the same geometric configuration. These parametrised counts are the ones that lift cleanly to the lattice $\mathbb{Z}^2$.
For $k\in \mathbb{F}_p^2$, we also define the pinned parametrised triangle set
\begin{align*}
\triangle^k_{\mathbb{F}_p}(S) := \{(w,r,s) \,:\ & w\in\mathbb{F}_p^2\setminus\{(0,0)\}, \ r,s\in\mathbb{F}_p,\\
& k-(rw+sJw)/2\in S,
\ k+(rw-sJw)/2\in S\}.
\end{align*}
The two displayed points are the base points of a triangle whose apex is fixed at $k$.

%From basic facts on quadratic residues we know that the set \eqref{eq:Bp_circle} has $\#B_p=p+1$ points. 
We need to count the elements of $B_p$, $\Box_{\mathbb{F}_p}(B_p)$, $\triangle_{\mathbb{F}_p}(B_p)$ and $\triangle^k_{\mathbb{F}_p}(B_p)$.

\begin{lemma}\label{lm:field}
For every prime $p\equiv3\pmod4$ we have
\begin{align} 
\#B_p & = p+1, \label{eq:Fpcount1} \\
\#\Box_{\mathbb{F}_p}(B_p) & = 4(p-1)p(p+1), \label{eq:Fpcount2} \\
\#\triangle_{\mathbb{F}_p}(B_p) & = 3(p-1)p(p+1), \label{eq:Fpcount3}
\end{align}
and, for every $k\in B_p$,
\begin{equation}
\#\triangle^k_{\mathbb{F}_p}(B_p) = 3(p-1)p. \label{eq:Fpcount4}
\end{equation}
\end{lemma}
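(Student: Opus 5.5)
The plan is to exploit that $p\equiv 3\pmod 4$ makes the form $x\cdot x$ anisotropic on $\mathbb{F}_p^2$: since $-1$ is a non-residue, $w\cdot w=0$ forces $w=0$. Also $w\cdot Jw=0$ and $Jw\cdot Jw=w\cdot w$. Every membership condition then becomes a low-degree polynomial equation in $r,s$, which I solve explicitly and count over $w$.

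For \eqref{eq:Fpcount1}, identify $\mathbb{F}_p^2$ with $\mathbb{F}_{p^2}=\mathbb{F}_p(i)$. Then $B_p$ is the kernel of the norm map $\mathbb{F}_{p^2}^\times\to\mathbb{F}_p^\times$. This map is surjective, so $\#B_p=(p^2-1)/(p-1)=p+1$. The same argument shows that for each $c\neq0$ exactly $p+1$ vectors $w$ satisfy $w\cdot w=c$; I record this for later.

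For \eqref{eq:Fpcount2}, fix $x\in B_p$ and $w\neq0$.
\begin{itemize}
\item The condition $x+rw\in B_p$ reads $r(2x\cdot w+r\,w\cdot w)=0$, so $r\in\{0,-2x\cdot w/w\cdot w\}$.
\item Likewise $x+sJw\in B_p$ gives $s\in\{0,-2x\cdot Jw/w\cdot w\}$.
\item Expanding $|x+rw+sJw|^2$ and using $w\cdot Jw=0$, the fourth condition is the sum of the first two, so it is automatic.
\end{itemize}
Hence the count for fixed $(x,w)$ is $(\#r)(\#s)$. The set $\{x\cdot w=0\}$ and the set $\{x\cdot Jw=0\}$ each contain $p-1$ nonzero $w$. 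They are disjoint, since $x$ and $Jx$ form a basis when $x\cdot x=1$. Summing over $w$ gives $4(p^2-1)-2\cdot2(p-1)=4p(p-1)$, and multiplying by $p+1$ choices of $x$ gives \eqref{eq:Fpcount2}.

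For \eqref{eq:Fpcount3}, $r$ is constrained exactly as before. Using $r\,x\cdot w=-r^2 w\cdot w/2$, the apex condition $x+(rw+sJw)/2\in B_p$ becomes
\[ s^2\, w\cdot w+4s\, x\cdot Jw-r^2\, w\cdot w=0. \]
\begin{itemize}
\item If $r=0$, then $s\in\{0,-4x\cdot Jw/w\cdot w\}$, giving $2$ solutions unless $x\cdot Jw=0$. Summing over $w$ gives $2(p^2-1)-(p-1)$.
\item If $r\neq0$ (possible only when $x\cdot w\ne0$), the discriminant is $16\bigl((x\cdot Jw)^2+r^2(w\cdot w)^2/4\bigr)=16\bigl((x\cdot Jw)^2+(x\cdot w)^2\bigr)=16\,w\cdot w$. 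Here I use the Lagrange-type identity $(x\cdot w)^2+(x\cdot Jw)^2=(x\cdot x)(w\cdot w)$. So there are $1+\chi(w\cdot w)$ roots, with $\chi$ the Legendre symbol.
\end{itemize}
By the level-set count above, $\sum_{w\ne0}\chi(w\cdot w)=0$. The excluded $w=tJx$ with $x\cdot w=0$ have $w\cdot w=t^2$ a square, so they contribute $2(p-1)$. The $r\ne0$ part is therefore $(p^2-1)-2(p-1)=(p-1)^2$. The total is $2(p^2-1)-(p-1)+(p-1)^2=3p(p-1)$ per $x$, which gives \eqref{eq:Fpcount3}.

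For \eqref{eq:Fpcount4}, the map $(x,w,r,s)\mapsto(w,r,s)$ is a bijection from the elements of $\triangle_{\mathbb{F}_p}(B_p)$ with apex $x+(rw+sJw)/2=k$ onto $\triangle^k_{\mathbb{F}_p}(B_p)$. Indeed, the base points are $x=k-(rw+sJw)/2$ and $x+rw=k+(rw-sJw)/2$. So $\sum_{k\in B_p}\#\triangle^k_{\mathbb{F}_p}(B_p)=\#\triangle_{\mathbb{F}_p}(B_p)$. Multiplication by $z\in B_p\subseteq\mathbb{F}_{p^2}$ is a rotation that preserves the dot product and commutes with $J$. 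It acts transitively on $B_p$, and it carries $\triangle^k$ bijectively to $\triangle^{zk}$ via $w\mapsto zw$. Hence all pinned counts are equal to $3(p-1)p(p+1)/(p+1)=3(p-1)p$.

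The main obstacle is the character-sum bookkeeping in the $r\ne0$ triangle case: establishing the identity $(x\cdot w)^2+(x\cdot Jw)^2=w\cdot w$ and correctly removing the $x\cdot w=0$ slice. Everything else is a direct count.
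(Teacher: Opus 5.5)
Your proposal is correct and follows essentially the same route as the paper: for fixed $x$ you solve the membership conditions as equations in $r,s$, note that the fourth rectangle vertex is automatic, find discriminant $16\,w\cdot w$ in the $r\neq0$ triangle case, and finish the pinned count by rotational symmetry and averaging over the apex. The differences are minor. You work with $x\cdot w$ and $x\cdot Jw$ directly, using the Lagrange identity, instead of normalising $x$ to $e$. You also obtain both $\#B_p=p+1$ and the character sum $\sum_{w\neq0}\chi(w\cdot w)=0$ from the norm map $\mathbb{F}_{p^2}^\times\to\mathbb{F}_p^\times$, whereas the paper uses the rational parametrisation of the circle and the identity $\sum_t\chi(1+t^2)=-1$.
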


\begin{proof}
We first count the points of $B_p$. Since $p\equiv3\pmod4$, the element $-1$ is not a square in $\mathbb{F}_p$ (i.e., it is a quadratic non-residue), so $1+t^2\ne0$ for every $t\in\mathbb{F}_p$. The $p$ points
\[ (\alpha,\beta) = \left(\frac{1-t^2}{1+t^2},\frac{2t}{1+t^2}\right); \quad t\in\mathbb{F}_p, \]
lie in $B_p$, and so does $(-1,0)$. For each of these parametrised points $(\alpha,\beta)$, one has $1+\alpha=2/(1+t^2)$ and hence $t=\beta/(1+\alpha)$, so they are distinct and none of them is $(-1,0)$. Conversely, if $(\alpha,\beta)\in B_p$ and $\alpha\ne-1$, then, with $t=\beta/(1+\alpha)$,
\[ t^2=\frac{1-\alpha}{1+\alpha},\quad \alpha=\frac{1-t^2}{1+t^2},\quad \beta=\frac{2t}{1+t^2}. \]
The remaining case $\alpha=-1$ forces $\beta=0$. This verifies \eqref{eq:Fpcount1}.

Let us count $\#\Box_{\mathbb{F}_p}(B_p)$. Denote $e=(1,0)$. For $x\in B_p$, define an $\mathbb{F}_p$-linear operator $\Phi_x\colon\mathbb{F}_p^2\to\mathbb{F}_p^2$ by
\[ \Phi_x(y):=(y\cdot x,\,y\cdot Jx). \]
The vectors $x$ and $Jx$ satisfy
\[ x\cdot x=Jx\cdot Jx=1, \quad x\cdot Jx=0. \]
Thus, $\Phi_x$ is invertible, preserves the dot product, maps $x$ to $e$ and commutes with $J$. Consequently, for each fixed $x\in B_p$, we may replace $x$ by $e$ and $w$ by $z=\Phi_x(w)$; as $w$ runs through $\mathbb{F}_p^2\setminus\{(0,0)\}$, so does $z$.
Write $z=(a,b)\neq(0,0)$. Since $p\equiv3\pmod4$, we have $z\cdot z=a^2+b^2\ne0$. For $r,s\in\mathbb{F}_p$,
\[ e+rz\in B_p \quad\Longleftrightarrow\quad r\bigl(2a+r(z\cdot z)\bigr)=0 \]
and
\[ e+sJz\in B_p \quad\Longleftrightarrow\quad s\bigl(-2b+s(z\cdot z)\bigr)=0. \]
Thus, the first condition has $1+\mathbbm{1}_{a\ne0}$ solutions for $r$, while the second has $1+\mathbbm{1}_{b\ne0}$ solutions for $s$. Moreover, because $z\cdot Jz=0$, the fourth vertex condition follows from these two side conditions:
\begin{align*} 
& (e+rz+sJz)\cdot(e+rz+sJz)-1 \\
& = \bigl((e+rz)\cdot(e+rz)-1\bigr) + \bigl((e+sJz)\cdot(e+sJz)-1\bigr) = 0,
\end{align*}
so $e+rz+sJz\in B_p$.
For fixed $x$, the number of rectangle parameters is therefore
\[ \sum_{z=(a,b)\neq(0,0)} (1+\mathbbm{1}_{a\ne0})(1+\mathbbm{1}_{b\ne0}) =4(p-1)^2+4(p-1)=4(p-1)p. \]
Multiplying by $\#B_p=p+1$ gives \eqref{eq:Fpcount2}.

Let us recall an elementary identity from number theory before proceeding. Let $\chi$ be the quadratic character on $\mathbb{F}_p$, extended by $\chi(0)=0$. Thus, $\chi(x)=1$ when $x$ is a nonzero quadratic residue and $\chi(x)=-1$ when $x$ is a non-residue. Then it holds that
\begin{equation}\label{eq:equals_minus1} 
\sum_{t\in\mathbb{F}_p}\chi(1+t^2)=-1. 
\end{equation}
Indeed, $\sum_{t\in\mathbb{F}_p}(1+\chi(1+t^2))$ counts the pairs $(u,t)\in\mathbb{F}_p^2$ satisfying $u^2-t^2=1$. Since $p$ is odd, the change of variables $(u,t)\mapsto(u-t,u+t)=(\alpha,\beta)$ is bijective, and the equation becomes $\alpha \beta=1$, which clearly has $p-1$ solutions. This proves \eqref{eq:equals_minus1}.

We now count $\#\triangle_{\mathbb{F}_p}(B_p)$. For fixed $x$ and $z=(a,b)\neq(0,0)$, the two conditions are
\[ e+rz\in B_p, \quad e+\frac{rz+sJz}{2}\in B_p. \]
The first condition is again $r(2a+r(z\cdot z))=0$, while the second is
\begin{equation}\label{eq:circletri}
(z\cdot z)(r^2+s^2)+4ar-4bs=0.
\end{equation}
If $r=0$, then \eqref{eq:circletri} becomes
\[ s\bigl((z\cdot z)s-4b\bigr)=0, \]
which gives $1+\mathbbm{1}_{b\ne0}$ choices for $s$. If $r\neq 0$, then $a\neq 0$ and the first condition has the additional root $r=-2a/(z\cdot z)$; substituting it into \eqref{eq:circletri} gives
\[ (z\cdot z)s^2-4bs-\frac{4a^2}{z\cdot z}=0. \]
This quadratic equation has discriminant $16(a^2+b^2)=16(z\cdot z)$, and hence it has $1+\chi(z\cdot z)$ solutions. Therefore, for each fixed $x$, the number of triangle parameters is
\[ \sum_{z=(a,b)\neq(0,0)} \left(1+\mathbbm{1}_{b\ne0} + \mathbbm{1}_{a\ne0}(1+\chi(a^2+b^2))\right). \]
The first two terms contribute $(p^2-1)+p(p-1)$. For the remaining term,
\[ \sum_{\substack{a\ne0\\ b\in\mathbb{F}_p}} \bigl(1+\chi(a^2+b^2)\bigr)
= p(p-1)+(p-1)\sum_{t\in\mathbb{F}_p}\chi(1+t^2) = (p-1)^2, \]
where we used \eqref{eq:equals_minus1} in the last equality.
Thus the total count for fixed $x$ is
\[ (p^2-1)+p(p-1)+(p-1)^2=3(p-1)p, \]
and multiplying by $\#B_p=p+1$ gives \eqref{eq:Fpcount3}.

It remains to compute the pinned counts. For $u=(u_1,u_2)\in B_p$ define an $\mathbb{F}_p$-linear operator $\Psi_u\colon\mathbb{F}_p^2\to\mathbb{F}_p^2$,
\[ \Psi_u(v_1,v_2):=(u_1v_1-u_2v_2,\,u_2v_1+u_1v_2). \]
The columns of $\Psi_u$ are $u$ and $Ju$, so $\Psi_u$ preserves the dot product, preserves $B_p$, commutes with $J$ and maps $e$ to $u$. Hence $(w,r,s)\mapsto(\Psi_u w,r,s)$ is a bijection from $\triangle^e_{\mathbb{F}_p}(B_p)$ to $\triangle^u_{\mathbb{F}_p}(B_p)$. The cardinality $\#\triangle^k_{\mathbb{F}_p}(B_p)$ is therefore independent of $k\in B_p$.
Every tuple $(x,w,r,s)\in\triangle_{\mathbb{F}_p}(B_p)$ has apex
\[ k=x+\frac{rw+sJw}{2}\in B_p. \]
Conversely, once this apex $k$ is fixed, the two base-point conditions are exactly those in the definition of $\triangle^k_{\mathbb{F}_p}(B_p)$. Hence
\[ \#\triangle_{\mathbb{F}_p}(B_p) = \sum_{k\in B_p}\#\triangle^k_{\mathbb{F}_p}(B_p). \]
Since the summands are equal and $\#B_p=p+1$, each of them is $3(p-1)p$, which proves \eqref{eq:Fpcount4}.
\end{proof}

For a square-free odd positive integer $q$, write
\[ W_q:=\{w\in\mathbb{Z}_q^2:
w\bmod p\ne(0,0)\text{ for each prime }p\mid q\}. \]
For $S\subseteq\mathbb{Z}_q^2$, define
\begin{align*}
\Box_{\mathbb{Z}_q}(S):=\{(x,w,r,s) \,:\ &x\in\mathbb{Z}_q^2,
\ w\in W_q,
\ r,s\in\mathbb{Z}_q,\\
&x,\ x+rw,\ x+sJw,\ x+rw+sJw\in S\},\\
\triangle_{\mathbb{Z}_q}(S):=\{(x,w,r,s) \,:\ &x\in\mathbb{Z}_q^2,
\ w\in W_q,
\ r,s\in\mathbb{Z}_q,\\
&x,\ x+rw,\ x+(rw+sJw)/2\in S\}.
\end{align*}
For $k\in\mathbb{Z}_q^2$ we also define
\begin{align*}
\triangle^k_{\mathbb{Z}_q}(S) := \{(w,r,s) \,:\ &w\in W_q,
\ r,s\in\mathbb{Z}_q,\\
& k-(rw+sJw)/2\in S, \ k+(rw-sJw)/2\in S\}.
\end{align*}
Here $1/2$ is, again, the inverse of $2$ modulo $q$.

\begin{lemma}\label{lm:crt}
Let $P$ be a nonempty finite set of primes congruent to $3$ modulo $4$. Put $q=\prod_{p\in P}p$. Then
\begin{align*}
\#B_q & = \prod_{p\in P} (p+1), \\
\#\Box_{\mathbb{Z}_q}(B_q) & = \prod_{p\in P} 4(p-1)p(p+1), \\
\#\triangle_{\mathbb{Z}_q}(B_q) & = \prod_{p\in P} 3(p-1)p(p+1),
\end{align*}
and, for every $k\in B_q$,
\[ \#\triangle^k_{\mathbb{Z}_q}(B_q) = \prod_{p\in P} 3(p-1)p. \]
Consequently,
\[ \frac{\#\Box_{\mathbb{Z}_q}(B_q)}{\#\triangle_{\mathbb{Z}_q}(B_q)} = \Bigl(\frac{4}{3}\Bigr)^{\#P},
\quad \frac{\#\Box_{\mathbb{Z}_q}(B_q)}{\#B_q\,\max_{k\in B_q}\#\triangle^k_{\mathbb{Z}_q}(B_q)} = \Bigl(\frac{4}{3}\Bigr)^{\#P}. \]
\end{lemma}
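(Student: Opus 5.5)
The plan is to reduce everything to Lemma~\ref{lm:field} via the Chinese remainder theorem. Since $q=\prod_{p\in P}p$ is square-free and odd, the reduction maps give a ring isomorphism $\mathbb{Z}_q\cong\prod_{p\in P}\mathbb{F}_p$. This isomorphism is coordinatewise on $\mathbb{Z}_q^2\cong\prod_{p}\mathbb{F}_p^2$, and it commutes with the dot product, with $J$, with scalar multiplication by $r,s$, and with multiplication by $1/2$, because the inverse of $2$ modulo $q$ reduces to the inverse of $2$ modulo each $p$.

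First, membership in $B_q$ is by definition equivalent to membership of each reduction in $B_p$. Hence $B_q$ corresponds to $\prod_p B_p$, and $\#B_q=\prod_p(p+1)$ follows from \eqref{eq:Fpcount1}.

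Next, consider the parameter tuples $(x,w,r,s)\in\mathbb{Z}_q^2\times W_q\times\mathbb{Z}_q\times\mathbb{Z}_q$. Under CRT these correspond bijectively to families $(x_p,w_p,r_p,s_p)_{p\in P}$ with $x_p\in\mathbb{F}_p^2$, $w_p\in\mathbb{F}_p^2\setminus\{(0,0)\}$ and $r_p,s_p\in\mathbb{F}_p$. The definition of $W_q$ is chosen precisely so that its CRT image is $\prod_p(\mathbb{F}_p^2\setminus\{(0,0)\})$.

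For rectangles, the vertex conditions are $x,\ x+rw,\ x+sJw,\ x+rw+sJw\in B_q$. Since these points are built by ring operations, each one reduces modulo $p$ to the corresponding expression in $(x_p,w_p,r_p,s_p)$. So the conditions hold if and only if, for every $p$, the tuple $(x_p,w_p,r_p,s_p)$ lies in $\Box_{\mathbb{F}_p}(B_p)$. This gives a bijection $\Box_{\mathbb{Z}_q}(B_q)\cong\prod_p\Box_{\mathbb{F}_p}(B_p)$, and the count follows from \eqref{eq:Fpcount2}. The same argument, now using the compatibility of $1/2$, gives $\triangle_{\mathbb{Z}_q}(B_q)\cong\prod_p\triangle_{\mathbb{F}_p}(B_p)$ and the count from \eqref{eq:Fpcount3}.

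For the pinned count with $k\in B_q$, each reduction $k_p$ lies in $B_p$. The same reasoning gives $\triangle^k_{\mathbb{Z}_q}(B_q)\cong\prod_p\triangle^{k_p}_{\mathbb{F}_p}(B_p)$, which has cardinality $\prod_p 3(p-1)p$ by \eqref{eq:Fpcount4}. This count is independent of $k$, so the maximum over $k\in B_q$ equals this common value.

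The two ratios then follow by dividing the products factor by factor. For the full counts, the ratio in each factor is $4(p-1)p(p+1)/(3(p-1)p(p+1))=4/3$. For the pinned version, the ratio in each factor is $4(p-1)p(p+1)/\bigl((p+1)\cdot 3(p-1)p\bigr)=4/3$.

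There is no real obstacle in this argument. The only point needing care is that the parametrised sets are products of sets rather than merely products of counts. That requires checking two things: that the constraint $w\in W_q$ factors exactly as $\prod_p(\mathbb{F}_p^2\setminus\{(0,0)\})$, which is the reason for defining $W_q$ this way instead of as $w\neq 0$; and that every defining condition is a conjunction of conditions modulo each $p$.
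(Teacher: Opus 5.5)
Your proposal is correct and follows essentially the same route as the paper. Both use the CRT ring isomorphism $\mathbb{Z}_q\cong\prod_{p\in P}\mathbb{F}_p$, check that it respects the dot product, $J$, the inverse of $2$, and $W_q\mapsto\prod_p(\mathbb{F}_p^2\setminus\{(0,0)\})$, identify each parametrised set with the corresponding Cartesian product, and then read off the cardinalities from Lemma~\ref{lm:field}.
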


\begin{proof}
Since the primes in $P$ are pairwise coprime, the Chinese
remainder theorem gives a ring isomorphism
\[ \pi\colon\mathbb{Z}_q\to \prod_{p\in P}\mathbb{F}_p,
\quad \alpha\mapsto (\alpha\!\!\!\!\mod p)_{p\in P}. \]
This isomorphism also maps the inverse of $2$ in $\mathbb{Z}_q$ to the tuple of inverses of $2$ in each $\mathbb{F}_p$.
We also extend $\pi$ to the Cartesian powers of $\mathbb{Z}_q$ as
\begin{align*} 
& \pi\colon\mathbb{Z}_q^m\to \prod_{p\in P}\mathbb{F}_p^m \cong \Bigl(\prod_{p\in P}\mathbb{F}_p\Bigr)^m, \\
& \pi(\alpha_1,\ldots,\alpha_m) := \bigl((\alpha_1,\ldots,\alpha_m)\!\!\!\!\mod p\bigr)_{p\in P} \equiv \big(\pi(\alpha_1),\ldots,\pi(\alpha_m)\big)
\end{align*}
for every positive integer $m$.
These maps are still bijective. In particular, $\pi$ maps $W_q$ onto
\[ \pi(W_q) = \prod_{p\in P}\bigl(\mathbb{F}_p^2\setminus\{(0,0)\}\bigr), \]
preserves the dot product,
\[ \pi(x\cdot y) = \pi(x) \cdot \pi(y); \quad x,y\in\mathbb{Z}_q^2, \]
and commutes with $J$,
\[ \pi(Jx) = J\pi(x); \quad x\in\mathbb{Z}_q^2. \]
By our definitions,
\begin{align*}
\pi(B_q) & = \prod_{p\in P} B_p, \\
\pi\bigl(\Box_{\mathbb{Z}_q}(B_q)\bigr) & = \prod_{p\in P}\Box_{\mathbb{F}_p}(B_p), \\
\pi\bigl(\triangle_{\mathbb{Z}_q}(B_q)\bigr) & = \prod_{p\in P}\triangle_{\mathbb{F}_p}(B_p),
\end{align*}
and
\[ \pi\bigl(\triangle^k_{\mathbb{Z}_q}(B_q)\bigr) = \prod_{p\in P}\triangle^{k_p}_{\mathbb{F}_p}(B_p) \]
for every $k\in B_q$ such that $\pi(k)=(k_p)_{p\in P}$.
Computing the cardinalities of these four Cartesian products using Lemma~\ref{lm:field} immediately proves all four claimed identities.
\end{proof}

\subsection{Lattice lifting}
%\label{sec:lifting}

A vector $w=(w_1,w_2)\in\mathbb{Z}^2$ is called \emph{primitive} if $w_1$ and $w_2$ are coprime. The following lemma converts modular ``orthogonality'' into exact lattice orthogonality. We state both the unpinned and pinned counts, and keep the dependence on $q$ explicit, so that the later proofs can refer to a single uniform counting result.

\begin{lemma}\label{lm:lifting}
Let $q$ be an odd square-free positive integer and put
\[ \eta_q:=\frac8{q^2}\prod_{p\nmid q}\Bigl(1-\frac1{p^2}\Bigr)>0, \]
where the product is over primes $p$ not dividing $q$.

\emph{(i)} Let
$E\subseteq\mathbb{Z}_q^2\times W_q\times\mathbb{Z}_q\times\mathbb{Z}_q$. For $\alpha,\beta,\gamma>0$, let $\mathcal{M}_E^{\alpha,\beta,\gamma}(N)$ be the number of quadruples $(x,w,r,s)\in\mathbb{Z}^2\times\mathbb{Z}^2\times\mathbb{Z}\times\mathbb{Z}$ such that
\begin{align*}
& w\text{ is primitive},\quad (r,s)\ne(0,0),\quad \norm{x}\le\alpha N,\\
& \norm{rw}\le\beta N,
\quad \norm{sw}\le\gamma N,
\quad (x,w,r,s)\bmod q\in E.
\end{align*}
Then, for $N\ge q$, uniformly in $E$,
\begin{equation}\label{eq:unifiedlifting}
\mathcal{M}_E^{\alpha,\beta,\gamma}(N)
=\frac{16\eta_q}{q^4}\alpha^2\beta\gamma\,\#E\ N^4\log N
+O_{\alpha,\beta,\gamma}\!\left(\frac{\#E}{q^4}N^4\right).
\end{equation}

\emph{(ii)} Let $F\subseteq W_q\times\mathbb{Z}_q\times\mathbb{Z}_q$. For $\beta,\gamma>0$, let $\mathcal{N}_F^{\beta,\gamma}(N)$ be the number of triples $(w,r,s)\in\mathbb{Z}^2\times\mathbb{Z}\times\mathbb{Z}$ such that
\[ w\text{ is primitive},\quad (r,s)\ne(0,0),\quad
\norm{rw}\le\beta N,
\quad \norm{sw}\le\gamma N,
\quad (w,r,s)\bmod q\in F. \]
Then, for $N\ge q$, uniformly in $F$,
\begin{equation}\label{eq:pinnedlifting}
\mathcal{N}_F^{\beta,\gamma}(N)
=\frac{4\eta_q}{q^2}\beta\gamma\,\#F\ N^2\log N +O_{\beta,\gamma}\left(\frac{\#F}{q^2}N^2\right).
\end{equation}
Moreover,
\begin{equation}\label{eq:ANuniform}
\#A_N(q)=\frac{4\#B_q}{q^2}N^2 + O\biggl(\frac{\#B_q}{q}N\biggr).
\end{equation}
\end{lemma}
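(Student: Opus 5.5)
The plan is to reduce everything to one basic count, that of primitive vectors $w$ in a fixed residue class modulo $q$ inside $\ell^\infty$-shells. For each shell, the admissible $r,s$ (and, in part (i), $x$) are then counted by elementary one-dimensional lattice counts in arithmetic progressions. Since all error terms are per residue class, it suffices to prove (ii) and (i) for a single element of $F$ or $E$ and then sum; this gives uniformity in $F,E$ automatically.

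\emph{Step 1 (primitive vectors in a class).} Fix $w_0\in W_q$. I would show that
\[ \#\{w\in\mathbb{Z}^2 \text{ primitive}: w\equiv w_0 \ (\mathrm{mod}\ q),\ \norm{w}\le M\} = \frac{\eta_q}{2}M^2 + O(M\log(2M)) \]
for $M\ge1$. The argument is Möbius inversion over $d$ coprime to $q$. Because $w_0\bmod p\neq(0,0)$ for $p\mid q$, divisibility by primes $p\mid q$ is automatically excluded. For $(d,q)=1$, the conditions $d\mid w$ and $w\equiv w_0$ define a single class modulo $dq$, with $(2M/(dq))^2+O(M/(dq)+1)$ points. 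Summing $\mu(d)$ over $d\le M$ gives the main term $\frac{4M^2}{q^2}\prod_{p\nmid q}(1-p^{-2})=\frac{\eta_q}{2}M^2$. The error is $O(M\log M)$, plus a tail $O(M^2/(q^2 M))$.

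\emph{Step 2 (pinned count (ii)).} Fix $(w_0,r_0,s_0)\in F$. For primitive $w$ with $\norm{w}=m$, the conditions are $|r|\le\beta N/m$ with $r\equiv r_0$, $|s|\le\gamma N/m$ with $s\equiv s_0$, and $(r,s)\ne(0,0)$. This gives $\bigl(\tfrac{2\beta N}{mq}+O(1)\bigr)\bigl(\tfrac{2\gamma N}{mq}+O(1)\bigr)-O(1)$ pairs when $m\le\min(\beta,\gamma)N$. For larger $m$, one of $r,s$ is forced to be $0$, and the pair count is $O(N/(mq)+1)$, nonzero only for $m\lesssim N$. I would weight these counts against the shell counts $dP(m)$ from Step 1 and use partial summation. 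The main term becomes
\[ \frac{4\beta\gamma N^2}{q^2}\sum_{m\le cN}\frac{dP(m)}{m^2}=\frac{4\beta\gamma N^2}{q^2}\,\eta_q\log N+O_{\beta,\gamma}(N^2/q^2). \]
Here the $O(M\log M)$ error of Step 1 contributes $\sum N^2\log m/(q^2m^2)=O(N^2/q^2)$. The cross terms contribute $\sum_{m\lesssim N}(N/(mq))\cdot(m/q^2)=O(N^2/q^3)$. The $O(1)$ terms contribute $O(N^2/q^2)$ via $\sum_{m\lesssim N}m/q^2$. Here $N\ge q$ is used: it makes $\#\{w\equiv w_0, \norm{w}\lesssim N\}\lesssim N^2/q^2$ rather than $O(1)$. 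Summing over $F$ gives \eqref{eq:pinnedlifting}.

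\emph{Step 3 (part (i) and \eqref{eq:ANuniform}).} In (i), $x$ is independent of $(w,r,s)$. Each class $x_0$ modulo $q$ contains $(2\alpha N/q+O(1))^2=4\alpha^2N^2/q^2+O_\alpha(N/q)$ points with $\norm{x}\le\alpha N$, using $N\ge q$. Writing $E$ as a union of fibres over $x_0$ and multiplying by Step 2 yields the main term $\frac{16\eta_q}{q^4}\alpha^2\beta\gamma\#E\,N^4\log N$. The same lattice count, summed over $B_q$, gives \eqref{eq:ANuniform} directly.

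\emph{Main obstacle.} The delicate point is the error term in Step 3. The $O(N/q)$ error in the $x$-count, multiplied by the pinned main term $\asymp N^2\log N/q^2$, gives $N^3\log N/q^3$. This is $\le N^4/q^4$ only when $N\gtrsim q\log N$, not merely when $N\ge q$. To fix this, I would either avoid factorising and count $x$ exactly per class, or absorb the error using that $\log N\lesssim N/q$ in the range where the lemma is later applied (namely $N\ge e^{C_0q^2}$). Failing that, I would check whether the $x$-count error can be arranged to carry a factor making it $O(N/q)\cdot O(N^2/q^2)$ without the logarithm.
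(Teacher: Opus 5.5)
Your plan is essentially the paper's proof: counting one residue class at a time, using M\"obius inversion over $d$ coprime to $q$ for primitivity, taking products of progression counts for $(r,s)$, multiplying by the $x$-count, and summing over $E$ or $F$. (The paper applies M\"obius directly to $\sum\norm{w}^{-2}$ rather than first counting primitive vectors and then using partial summation, but that difference is cosmetic.)

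The ``main obstacle'' you flag is not real. It comes from mis-sizing the pinned main term. Per residue class that term is $\frac{4\eta_q}{q^2}\beta\gamma N^2\log N$. Since $\eta_q\le 8/q^2$ (your own Step 1 shows $\eta_q/2=4q^{-2}\prod_{p\nmid q}(1-p^{-2})$), the term is $\asymp N^2\log N/q^4$, not $N^2\log N/q^2$. The $x$-error times this main term is therefore $O(N/q)\cdot O(N^2\log N/q^4)=O(N^3\log N/q^5)$. This is $O(N^4/q^4)$ because $\log N\le N\le Nq$, for every $N\ge q$. This is exactly how the paper closes the argument.

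None of your proposed workarounds is needed. Appealing to $N\ge e^{C_0q^2}$ would prove a strictly weaker lemma than the one stated. So as written the proposal does not establish the statement; the one-line observation $\eta_q\le 8/q^2$ completes it.

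There is also a smaller inaccuracy in Step 2: the cross terms are not $O(N^2/q^3)$ in general. The representative $\widetilde w_0$ of $w_0$ with $\norm{\widetilde w_0}\le q/2$ may have norm $1$, and then it alone contributes $O(N/q)$. The correct bound, as in the paper by writing $w=\widetilde w_0+qz$, is $\sum_{w\equiv w_0,\ 0<\norm{w}\le CN}\norm{w}^{-1}\lesssim 1+N/q^2$. This gives $O(N/q+N^2/q^3)=O(N^2/q^2)$ since $N\ge q$, so your per-class error survives.

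These cross terms must be bounded by dropping primitivity and counting the whole residue class, as your shell heuristic implicitly does. Feeding the $O(M\log 2M)$ error of Step 1 into partial summation here would only give $O(N\log^2 N/q)$, which is too weak when $N$ is close to $q$.
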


\begin{proof}
We first carry out the calculation for one residue class
$e=(x_0,w_0,r_0,s_0)\in\mathbb{Z}_q^2\times W_q\times
\mathbb{Z}_q\times\mathbb{Z}_q$. The number of choices for $x$ is
\[ \frac{4\alpha^2}{q^2}N^2+O_{\alpha}\left(\frac{N}{q}\right), \]
where we used $N\ge q$. For $a\in\mathbb{Z}_q$, write
\[ \mathcal{A}_a(Y):=\#\{n\in\mathbb{Z} \,:\, n\equiv a\!\!\!\pmod q,
\ |n|\le Y\}=\frac{2Y}{q}+O(1), \]
which holds uniformly in $a$ and $Y\ge0$. Since $(r,s)\ne(0,0)$, every contributing $w$ has $\norm{w}\le CN$, where $C:=\max\{\beta,\gamma\}$. For a fixed primitive $w\equiv w_0\pmod q$, the choices for $(r,s)$ are counted by
\[ \mathcal{A}_{r_0}\!\Bigl(\frac{\beta N}{\norm{w}}\Bigr)
\mathcal{A}_{s_0}\!\Bigl(\frac{\gamma N}{\norm{w}}\Bigr), \]
with the forbidden pair $(r,s)=(0,0)$ subtracted when $r_0=s_0=0$. Also
\[ \mathcal{A}_{r_0}(Y)\mathcal{A}_{s_0}(Z)
=\frac{4YZ}{q^2}+O\left(\frac{Y}{q}\right)+O\left(\frac{Z}{q}\right)+O(1). \]
We now check the error terms in this summation. Choose a representative $\widetilde w_0\in\mathbb{Z}^2$ of $w_0$ with $\norm{\widetilde w_0}\le q/2$. Every $w\equiv w_0\pmod q$ is of the form $\widetilde w_0+qz$. Hence, for $N\ge q$,
\[ \#\{w \,:\, 0<\norm{w}\le CN,\ w\equiv w_0\!\!\!\pmod q\}=O_{\beta,\gamma}\left(\frac{N^2}{q^2}\right). \]
Thus the forbidden pair $(r,s)=(0,0)$, when it occurs, and the summed contribution of the $O(1)$ term in the product formula are both $O_{\beta,\gamma}(N^2/q^2)$. Also, separating the possible term $z=0$, and for a constant $C'$ depending only on $C$,
\[ \sum_{\substack{0<\norm{w}\le CN\\ w\equiv w_0\!\!\!\pmod q}}\frac1{\norm{w}}
\lesssim_{\beta,\gamma} 1 + \frac1q\sum_{0<\norm{z}\le C'(N/q+1)}\frac1{\norm{z}}
\lesssim_{\beta,\gamma} 1 + \frac{N}{q^2}. \]
Therefore the terms $O(Y/q)+O(Z/q)$ contribute
\[ O_{\beta,\gamma}\biggl(\frac Nq \sum_{\substack{0<\norm{w}\le CN\\ w\equiv w_0\!\!\!\pmod q}}\frac1{\norm{w}}\biggr)
=O_{\beta,\gamma}\left(\frac Nq+\frac{N^2}{q^3}\right)
=O_{\beta,\gamma}\left(\frac{N^2}{q^2}\right), \]
where the last step uses $N\ge q$.
Thus the $(w,r,s)$ count in one residue class is
\begin{equation}\label{eq:wrsmain}
\frac{4\beta\gamma N^2}{q^2}
\sum_{\substack{0<\norm{w}\le CN\\ w\equiv w_0 \!\!\!\pmod q\\ w\text{ primitive}}} \frac1{\norm{w}^2} + O_{\beta,\gamma}\left(\frac{N^2}{q^2}\right).
\end{equation}

It remains to evaluate the primitive sum. Since $w_0\in W_q$, every common divisor $d$ of the two coordinates of a vector $w\equiv w_0\pmod q$ is coprime to $q$, and therefore $d^{-1}$ exists in $\mathbb{Z}_{q}$. By the M\"{o}bius inversion formula we have
\[ \mathbbm{1}_{w\,\mathrm{primitive}} = \sum_{d\mid \mathrm{gcd}(w_1,w_2)}\mu(d), \]
so that
\[ \sum_{\substack{0<\norm{w}\le CN\\ w\equiv w_0\!\!\!\pmod q \\ w\text{ primitive}}} \frac1{\norm{w}^2}
= \sum_{\substack{0<\norm{w}\leq CN\\ w\equiv w_0\!\!\!\pmod q}}\sum_{d\mid \mathrm{gcd}(w_1,w_2)}
\frac{\mu(d)}{\norm{w}^{2}}. \]
Note that the latter sum is over all (not necessarily primitive) $w$ divisible by $d$. By interchanging the sums and performing the substitution $w=dz$, we obtain
\[ \sum_{\substack{d\ge1\\d,q\text{ coprime}}}\frac{\mu(d)}{d^2} \sum_{\substack{0<\norm{z}\le CN/d\\ z\equiv d^{-1}w_0\!\!\!\pmod q}} \frac1{\norm{z}^2}. \]
For any residue class $a\in\mathbb{Z}_q^2$ and $T\ge0$,
\[ \#\{z\in\mathbb{Z}^2 \,:\, 0<\norm{z}\le T, \ z\equiv a\!\!\!\pmod q\}
=\frac{4T^2}{q^2}+O\left(\frac{T}{q}\right)+O(1), \]
uniformly in $a$ and $q$. Partial summation therefore gives, uniformly in $a$ and $q$,
\[ \sum_{\substack{0<\norm{z}\le X\\ z\equiv a \!\!\!\pmod q}} \frac1{\norm{z}^2}
= \frac8{q^2}\log X+O(1) \]
for $X\ge2$: the $O(T/q)+O(1)$ error contributes
\[ O\left(\frac1q\right)+O(1)+O\left(\frac1{qX}\right)+O\left(\frac1{X^2}\right)=O(1). \]
If $CN<2$, then the primitive sum and $\eta_q\log N$ are both $O_{\beta,\gamma}(1)$, so the desired estimate for the primitive sum is immediate. We may therefore assume $CN\ge2$. Put $D=\lfloor CN/2\rfloor$ and split the sum in $d$ at the number $D$.
In the range $d\le D$ we have $CN/d\ge2$, so this part of the sum contributes
\[ \frac8{q^2} \sum_{\substack{d\le D\\d,q\text{ coprime}}}\frac{\mu(d)}{d^2} (\log N+\log C-\log d)+O_{\beta,\gamma}(1). \]
The terms involving $\log C$, $\log d$ and the displayed error are $O_{\beta,\gamma}(1)$, since $\sum_{d\geq1}(1+\log d)/d^2<\infty$. Extending the truncated coefficient of $\log N$ to the infinite Euler product costs
\[ O_{\beta,\gamma}\biggl(\frac{\log N}{q^2}\sum_{d>D}\frac1{d^2}\biggr)=O_{\beta,\gamma}(1), \]
and the range $d>D$ contributes only $O_{\beta,\gamma}(1)$. Therefore
\begin{equation}\label{eq:primitivesum}
\sum_{\substack{0<\norm{w}\le CN\\ w\equiv w_0\!\!\!\pmod q\\ w\text{ primitive}}} \frac1{\norm{w}^2}
=\eta_q\log N+O_{\beta,\gamma}(1).
\end{equation}
Combining \eqref{eq:wrsmain} and \eqref{eq:primitivesum} gives
\[ \frac{4\eta_q}{q^2}\beta\gamma N^2\log N + O_{\beta,\gamma}(N^2/q^2) \]
for the $(w,r,s)$ count in one residue class $(w_0,r_0,s_0)$. Summing over the residue classes in $F$ proves \eqref{eq:pinnedlifting}.

Multiplying the one-residue-class $(w,r,s)$ count by the $x$ count gives the stated main term. The product of the main $x$ term with the $(w,r,s)$ error is $O_{\alpha,\beta,\gamma}(N^4/q^4)$, while the product of the $x$ error with the $(w,r,s)$ main term is
\[ O_{\alpha,\beta,\gamma}\!\left(\frac{\eta_q N^3\log N}{q^3}\right)
=O_{\alpha,\beta,\gamma}\!\left(\frac{N^4}{q^4}\right), \]
since $\eta_q\le 8/q^2$ and $\log N\le Nq$. The remaining product of errors is also $O_{\alpha,\beta,\gamma}(N^4/q^4)$ because $N\ge q$. Thus the one-residue-class count is
\[ \frac{16\eta_q}{q^4}\alpha^2\beta\gamma N^4\log N + O_{\alpha,\beta,\gamma}\!\left(\frac{N^4}{q^4}\right). \]
Summing over $e\in E$ proves \eqref{eq:unifiedlifting}.

Finally, for each residue class $a\in\mathbb{Z}_q^2$,
\[ \#\{x\in\mathbb{Z}^2 \,:\, \norm{x}\le N,\ x\equiv a\!\!\!\pmod q\}
=\frac{4N^2}{q^2}+O\left(\frac{N}{q}\right), \]
because $N\ge q$. Summing over $a\in B_q$ proves \eqref{eq:ANuniform}.
\end{proof}

\subsection{The asymptotic lower bounds}
\begin{proof}[Proof of Proposition~\ref{prop:fixedmodulus}]
Fix $P$, $q$ and $B_q$ as in the proposition statement. 
We shall choose $C_0\ge1$ at the end. Throughout we also use the elementary lower bound
\begin{equation}\label{eq:etaqlower}
\eta_q=\frac8{q^2}\prod_{p\nmid q}\left(1-\frac1{p^2}\right) \ge \frac{8}{\zeta(2)q^2}.
\end{equation}

We first lift rectangles. Take a tuple counted by
$\mathcal{M}_{\Box_{\mathbb{Z}_q}(B_q)}^{1/4,1/4,1/4}(N)$. The residue condition says that
\[ x, \quad x+rw, \quad x+sJw, \quad x+rw+sJw \]
all have residues in $B_q$. The size restrictions give
\[ \norm{x}\le \frac{N}{4}, \quad \norm{rw}\le \frac{N}{4}, \quad \norm{sJw}=\norm{sw}\le \frac{N}{4}, \]
and hence all four displayed points lie in $[-N,N]^2$. They therefore lie in $A_N(q)$. Moreover,
\[ (x+rw)+(x+sJw)=x+(x+rw+sJw), \quad (rw)\cdot(sJw)=0, \]
so these four points form an ordered rectangle in the sense of the definition of $\Box(A_N(q))$. Since $(r,s)\ne(0,0)$ and $w$ is primitive, the two side vectors are not both zero. Conversely, a fixed ordered rectangle arising in this way has at most two primitive-direction representations of the form $rw$ and $sJw$, namely those corresponding to $w$ and $-w$: if $rw\ne0$, then $w$ is the primitive direction of $rw$ up to sign, while if $rw=0$ then $sJw\ne0$ and $w$ is determined up to sign by $sJw$. Hence Lemma~\ref{lm:lifting}, with $(\alpha,\beta,\gamma)=(1/4,1/4,1/4)$, gives the uniform lower bound
\[ \#\Box(A_N(q)) \ge \frac{\eta_q}{32q^4}\#\Box_{\mathbb{Z}_q}(B_q)N^4\log N - O\left(\frac{\#\Box_{\mathbb{Z}_q}(B_q)}{q^4}N^4\right). \]
Using \eqref{eq:etaqlower}, this can also be written as
\begin{equation}\label{eq:rectrelative}
\#\Box(A_N(q)) \ge \frac{\eta_q}{32q^4}\#\Box_{\mathbb{Z}_q}(B_q)N^4\log N \left(1-O\Bigl(\frac{q^2}{\log N}\Bigr)\right).
\end{equation}

We next control the unpinned triangle count. Let
$(k_1,k_2,k_3)\in\triangle(A_N(q))$ and put
\[ d=k_2-k_1, \quad h=2k_3-k_1-k_2. \]
Then $d\cdot h=0$. The case $d=h=0$ forces $k_1=k_2=k_3$ and contributes at most $\#A_N(q)\le (2N+1)^2$ triples. In every other case there is a primitive vector $w\in\mathbb{Z}^2$ and integers $r,s$, unique up to simultaneous sign change, such that
\[ d=rw, \quad h=sJw. \]
Indeed, if $d\ne0$ we take $w$ to be the primitive vector in the direction of $d$; since $h$ is an integer vector perpendicular to the primitive vector $w$, it is an integer multiple of $Jw$. The case $d=0$ and $h\ne0$ is the same argument applied to $h$ and $Jw$. Thus the possibilities $d=0$ and $h=0$ are included by allowing $r=0$ and $s=0$, respectively. The norm bounds are
\[ \norm{k_1}\le N, \quad \norm{rw}=\norm{k_2-k_1}\le2N, \quad \norm{sw}=\norm{h}\le4N. \]
Since $w$ is primitive, $w\bmod q\in W_q$. Also, modulo $q$,
\[ k_1, \quad k_1+rw=k_2, \quad k_1+\frac{rw+sJw}{2}=k_3 \]
all lie in $B_q$. Hence the two choices $(w,r,s)$ and $(-w,-r,-s)$ give two tuples counted by
$\mathcal{M}_{\triangle_{\mathbb{Z}_q}(B_q)}^{1,2,4}(N)$. The modular count may include additional tuples for which $rw+sJw$ is not divisible by $2$ in $\mathbb{Z}^2$, but that only enlarges the count. Therefore
\[ \#\triangle(A_N(q)) \le \frac12\mathcal{M}_{\triangle_{\mathbb{Z}_q}(B_q)}^{1,2,4}(N)+O(N^2). \]
Applying Lemma~\ref{lm:lifting} with $(\alpha,\beta,\gamma)=(1,2,4)$ yields
\[ \#\triangle(A_N(q)) \le \frac{64\eta_q}{q^4}\#\triangle_{\mathbb{Z}_q}(B_q)N^4\log N + O\left(\frac{\#\triangle_{\mathbb{Z}_q}(B_q)}{q^4}N^4\right) + O(N^2). \]
The first error term is the main term times $O(q^2/\log N)$, by \eqref{eq:etaqlower}. The degenerate contribution $O(N^2)$ is absorbed by the same relative error: Lemma~\ref{lm:crt} gives $\#\triangle_{\mathbb{Z}_q}(B_q)\gtrsim q^3$, and so, using \eqref{eq:etaqlower} and $N\ge q$,
\[ N^2=O\left(\frac{\eta_q}{q^4}\#\triangle_{\mathbb{Z}_q}(B_q)N^4\log N\cdot\frac{q^2}{\log N}\right). \]
Thus
\begin{equation}\label{eq:triasymp}
\#\triangle(A_N(q)) \le \frac{64\eta_q}{q^4}\#\triangle_{\mathbb{Z}_q}(B_q)N^4\log N \left(1+O\Bigl(\frac{q^2}{\log N}\Bigr)\right).
\end{equation}

It remains to estimate the pinned triangle count. Fix $k\in A_N(q)$ and put $k_0=k\bmod q\in B_q$. If $(k_1,k_2,k)\in\triangle^k(A_N(q))$, set
\[ d=k_2-k_1, \quad h=2k-k_1-k_2. \]
The case $d=h=0$ contributes only the pair $k_1=k_2=k$. In every other case the same primitive-direction argument gives $w\in\mathbb{Z}^2$ primitive and $r,s\in\mathbb{Z}$, unique up to simultaneous sign change, such that
\[ d=rw, \quad h=sJw, \quad \norm{rw}\le2N, \quad \norm{sw}\le4N. \]
Modulo $q$ the two base points can be recovered from the apex by
\[ k_1=k-\frac{rw+sJw}{2}, \quad k_2=k+\frac{rw-sJw}{2}, \]
and they lie in $B_q$. Thus, again allowing the harmless overcount from parity, we have
\[ \#\triangle^k(A_N(q)) \le \frac12\mathcal{N}_{\triangle^{k_0}_{\mathbb{Z}_q}(B_q)}^{2,4}(N)+1. \]
Lemma~\ref{lm:lifting}, now with $(\beta,\gamma)=(2,4)$, gives, uniformly in $k\in A_N(q)$,
\[ \#\triangle^k(A_N(q)) \le \frac{16\eta_q}{q^2}\#\triangle^{k_0}_{\mathbb{Z}_q}(B_q)N^2\log N + O\left(\frac{\#\triangle^{k_0}_{\mathbb{Z}_q}(B_q)}{q^2}N^2\right)+1. \]
The displayed error term is the main term times $O(q^2/\log N)$, by \eqref{eq:etaqlower}. The final $+1$ is also absorbed uniformly: Lemma~\ref{lm:crt} gives $\max_{b\in B_q}\#\triangle^b_{\mathbb{Z}_q}(B_q)\gtrsim q^2$, and hence, using \eqref{eq:etaqlower} and $N\ge q$,
\[ 1=O\left(\frac{\eta_q}{q^2}\max_{b\in B_q}\#\triangle^b_{\mathbb{Z}_q}(B_q)N^2\log N\cdot\frac{q^2}{\log N}\right). \]
Consequently, again uniformly in $k\in A_N(q)$,
\begin{equation}\label{eq:pinnedasymp}
\max_{k\in A_N(q)}\#\triangle^k(A_N(q)) \le \frac{16\eta_q}{q^2}\max_{b\in B_q}\#\triangle^b_{\mathbb{Z}_q}(B_q)N^2\log N \left(1+O\Bigl(\frac{q^2}{\log N}\Bigr)\right).
\end{equation}
Finally, Lemma~\ref{lm:lifting} also gives
\begin{equation}\label{eq:sizeANupper}
\#A_N(q) = \frac{4\#B_q}{q^2}N^2+O\left(\frac{\#B_q}{q}N\right)
= \frac{4\#B_q}{q^2}N^2\left(1+O\Bigl(\frac{q}{N}\Bigr)\right).
\end{equation}

Choosing $C_0$ sufficiently large in \eqref{eq:fixeducondition} makes the relative error terms $O(q^2/\log N)$ and $O(q/N)$ arbitrarily small, so that they can be absorbed into the corresponding dominant terms.
Combining the preceding estimates, we obtain
\[ \mathfrak{Q}(A_N(q)) \gtrsim \frac{\#\Box_{\mathbb{Z}_q}(B_q)}{\#\triangle_{\mathbb{Z}_q}(B_q)}, \]
and
\[ \mathfrak{Q}^{\mathrm{pin}}(A_N(q)) \gtrsim \frac{\#\Box_{\mathbb{Z}_q}(B_q)}{\#B_q\,\max_{b\in B_q}\#\triangle^b_{\mathbb{Z}_q}(B_q)}. \]
Applying Lemma~\ref{lm:crt} proves \eqref{eq:fixeduniform}.
\end{proof}

We can now finalise the proof of our main combinatorial result.

\begin{proof}[Proof of Theorem~\ref{thm:combinatorics}]
Bounds for $\mathfrak{R}(N)$ and $\mathfrak{R}^{\mathrm{pin}}(N)$ are now almost immediate.
Let $p_1<p_2<\cdots$ be the first $m$ primes congruent to $3$ modulo $4$, and put $q_m=p_1p_2\cdots p_m$. By the prime number theorem in arithmetic progressions \cite[\S20--22]{Davenport}, \cite[\S7.5]{Dujella}, there is an absolute constant $C_1$ such that
\[ q_m\le \exp(C_1m\log m) \]
for $m\geq2$.
Choosing
\[ m(N):=\left\lfloor c_1\frac{\log\log N}{\log\log\log N}\right\rfloor, \]
where $c_1>0$ is a sufficiently small absolute constant, gives $q_m^2 \leq (\log N)^{\theta}$ with $\theta < 1$. Then, for all sufficiently large $N$, the modulus $q_{m(N)}$ satisfies $N\geq \exp(C_0 q_{m(N)}^2)$, where $C_0$ is the constant from Proposition~\ref{prop:fixedmodulus}.
Applying that proposition with $q=q_{m(N)}$ gives
\[ \mathfrak{R}(N),\ \mathfrak{R}^{\mathrm{pin}}(N) \gtrsim \Bigl(\frac{4}{3}\Bigr)^{m(N)}, \]
which is the claimed bound.

Now we turn to the bounds for $\widetilde{\mathfrak{R}}(n)$ and $\widetilde{\mathfrak{R}}^{\mathrm{pin}}(n)$.
Again let $q=q_m=p_1p_2\cdots p_m$ be the product of the first $m$ primes congruent to $3$ modulo $4$. There is an absolute constant $C_2$ such that, whenever
\begin{equation}\label{eq:cardcondition}
n\ge e^{C_2q^2},
\end{equation}
there is an $n$-point set $A\subseteq\mathbb{Z}^2$ satisfying
\begin{equation}\label{eq:cardfixed}
\mathfrak{Q}(A),\ \mathfrak{Q}^{\mathrm{pin}}(A) \gtrsim \Bigl(\frac{4}{3}\Bigr)^m.
\end{equation}
Indeed, choose a nonnegative integer $N$ maximal subject to
$\#A_N(q)\le n$, and then choose an intermediate set
\[ A_N(q)\subseteq A\subseteq A_{N+1}(q), \quad \#A=n. \]
The individual configuration counts are monotone under inclusion. Thus
\[ \mathfrak{Q}(A) \ge \frac{\#\Box(A_N(q))}{\#\triangle(A_{N+1}(q))}, \]
and
\[ \mathfrak{Q}^{\mathrm{pin}}(A) \ge \frac{\#\Box(A_N(q))}{\#A_{N+1}(q)\,\max_{k\in A_{N+1}(q)}\limits \#\triangle^k(A_{N+1}(q))}. \]
We now spell out the uniform estimates needed for these two right-hand sides. The estimates \eqref{eq:rectrelative}, \eqref{eq:triasymp}, \eqref{eq:pinnedasymp} and \eqref{eq:sizeANupper}, applied with $M$ in place of $N$, imply the following: after increasing an absolute constant $C$ if necessary, whenever $M\ge \exp(C q^2)$,
\begin{align*}
\#\Box(A_M(q))
&\ge \frac{\eta_q}{64q^4}\#\Box_{\mathbb{Z}_q}(B_q)M^4\log M,\\
\#\triangle(A_M(q))
&\le \frac{128\eta_q}{q^4}\#\triangle_{\mathbb{Z}_q}(B_q)M^4\log M,\\
\max_{k\in A_M(q)}\#\triangle^k(A_M(q))
&\le \frac{32\eta_q}{q^2}\max_{b\in B_q}\#\triangle^b_{\mathbb{Z}_q}(B_q)M^2\log M,\\
\#A_M(q)
&\le \frac{8\#B_q}{q^2}M^2.
\end{align*}
The maximality of $N$ gives $n\le \#A_{N+1}(q)\le (2N+3)^2$, hence $N\ge \sqrt n/4$ for all large $n$. After increasing $C_2$, condition \eqref{eq:cardcondition} ensures $N\ge\exp(Cq^2)$, and the same bound with $N+1$ in place of $N$. Applying the displayed estimates with $M=N$ in the numerator and with $M=N+1$ in the denominator gives
\[ \mathfrak{Q}(A) \gtrsim \frac{\#\Box_{\mathbb{Z}_q}(B_q)N^4\log N}{\#\triangle_{\mathbb{Z}_q}(B_q)(N+1)^4\log(N+1)}
\gtrsim \frac{\#\Box_{\mathbb{Z}_q}(B_q)}{\#\triangle_{\mathbb{Z}_q}(B_q)}
=\Bigl(\frac43\Bigr)^m, \]
and
\begin{align*}
\mathfrak{Q}^{\mathrm{pin}}(A)
& \gtrsim \frac{\#\Box_{\mathbb{Z}_q}(B_q)N^4\log N}
{\#B_q (N+1)^2 \max_{b\in B_q}\#\triangle^b_{\mathbb{Z}_q}(B_q)(N+1)^2\log(N+1)} \\
&\gtrsim \frac{\#\Box_{\mathbb{Z}_q}(B_q)}{\#B_q\,\max_{b\in B_q}\#\triangle^b_{\mathbb{Z}_q}(B_q)}
=\Bigl(\frac43\Bigr)^m.
\end{align*}
This proves \eqref{eq:cardfixed}.

Now choose 
\[ m(n):=\left\lfloor c_2\frac{\log\log n}{\log\log\log n}\right\rfloor \]
with $c_2>0$ sufficiently small. Then, $n\geq\exp(C_2 q_{m(n)}^2)$ holds for all sufficiently large $n$.
Applying \eqref{eq:cardfixed} with $q=q_{m(n)}$ gives
\[ \widetilde{\mathfrak{R}}(n),\ \widetilde{\mathfrak{R}}^{\mathrm{pin}}(n) \gtrsim \Bigl(\frac{4}{3}\Bigr)^{m(n)}, \]
as desired.
\end{proof}

%%%%

\section{Equivalence of the Euclidean and periodic formulations}
\label{sec:transference}
This section proves the quantitative Euclidean-to-periodic transference stated in Theorem~\ref{thm:quant-transference-intro}. We also record a converse argument since it clarifies the close relationship of the periodic Stein-type and Mizohata--Takeuchi inequalities to their Euclidean counterparts.

Throughout this section, $d\geq1$ is fixed. In the Euclidean estimates, ``suitable'' initial data means that $u_0\in \mathrm{L}^2(\mathbb{R}^d)$ has compactly supported Fourier transform, so that the representation \eqref{eq:Eucl-solution} is absolutely convergent. In the periodic estimates, suitable data are trigonometric polynomials, equivalently $u_0\in\mathrm{L}^2(\mathbb{T}^d)$ with finite Fourier support, so that \eqref{eq:per-solution} is a finite sum. We identify functions on $\mathbb{T}^n$ with their $1$-periodic representatives on $\mathbb{R}^n$ whenever this causes no ambiguity.

For a finite set $A\subseteq\mathbb{Z}^d$ and coefficients $a=(a_k)_{k\in A}$, it will be convenient to write
\[ E_Aa(x,t):=\sum_{k\in A}a_k e^{2\pi i(x\cdot k+t|k|^2)}, \]
noting that $E_Aa$ is the general solution to the periodic Schr\"{o}dinger equation for the class of initial data under consideration.
We will use the following elementary reduction several times. For fixed $A$, it is enough to prove the periodic estimates for continuous weights. Indeed, let $P_\varepsilon$ be a nonnegative smooth approximate identity on $\mathbb{T}^{d+1}$. Since $E_Aa$ is bounded, $P_\varepsilon\ast w\to w$ in $\mathrm{L}^1$ implies convergence of the common left-hand side of the forthcoming \eqref{eq:periodic-MT-finite} and \eqref{eq:periodic-Stein-finite}. On the other hand,
\[ \rho_{\mathbb{Z}^d}^{\ast}(P_\varepsilon\ast w)(x,k)
=\int_{\mathbb{T}^{d+1}}P_\varepsilon(y,s)\rho_{\mathbb{Z}^d}^{\ast}w(x-y-2sk,k)\dd y\dd s, \]
and hence
\[ \sup_{x\in\mathbb{T}^d}\rho_{\mathbb{Z}^d}^{\ast}(P_\varepsilon\ast w)(x,k)
\leq \sup_{x\in\mathbb{T}^d}\rho_{\mathbb{Z}^d}^{\ast}w(x,k). \]
We note in passing that one may avoid approximating on the left-hand side here if one instead uses the local constancy property of $E_Aa$ that comes from its finite Fourier support. Local constancy is routinely used in this way in weighted extension theory; see \cite{CIW}, for example.

\subsection{From Euclidean to periodic}
Our first goal is to prove the direction claimed in Theorem~\ref{thm:quant-transference-intro}. The proof is a somewhat standard wave-packet ``thickening'' argument.

Fix $N\geq1$ and a finite set $A\subseteq\mathbb{Z}^d\cap[-N,N]^d$. We first prove
\begin{equation}\label{eq:periodic-MT-finite}
\int_{\mathbb{T}}\int_{\mathbb{T}^d}|E_Aa(x,t)|^2w(x,t)\dd x\dd t
\leq \mathfrak{M}_{\mathbb{R}^d}(N+1)\sup_{\substack{x\in\mathbb{T}^d\\ k\in A}}\rho^{\ast}_{\mathbb{Z}^d}w(x,k)\sum_{k\in A}|a_k|^2,
\end{equation}
for every nonnegative weight $w$ on $\mathbb{T}^{d+1}$. We then prove the corresponding Stein-type estimate
\begin{equation}\label{eq:periodic-Stein-finite}
\int_{\mathbb{T}}\int_{\mathbb{T}^d}|E_Aa(x,t)|^2w(x,t)\dd x\dd t
\leq \mathfrak{S}_{\mathbb{R}^d}(N+1)\sum_{k\in A}|a_k|^2\sup_{x\in\mathbb{T}^d}\rho^{\ast}_{\mathbb{Z}^d}w(x,k).
\end{equation}
Since $E_Aa$ is the periodic Schr\"{o}dinger solution with initial data $u_0(x)=\sum_{k\in A}a_ke^{2\pi ix\cdot k}$ and $\|u_0\|_{\mathrm{L}^2(\mathbb{T}^d)}=\|a\|_{\ell^2(A)}$, inequalities \eqref{eq:periodic-MT-finite} and \eqref{eq:periodic-Stein-finite} imply \eqref{eq:MT-transference} and \eqref{eq:St-transference}, respectively, after taking the least admissible constants.

By the reduction above we may suppose that $w$ is continuous. Define
\[ X_w(v):=\sup_{x\in\mathbb{R}^d}\int_0^1w(x-2tv,t)\dd t,\quad v\in\mathbb{R}^d. \]
Because $w$ is continuous and periodic in $x$, the function $X_w$ is uniformly continuous on bounded subsets of $\mathbb{R}^d$; indeed,
\[ |X_w(v_1)-X_w(v_2)|
\leq \sup_{x\in\mathbb{R}^d}\int_0^1|w(x-2tv_1,t)-w(x-2tv_2,t)|\dd t. \]
Choose a Schwartz function $\varphi$ on $\mathbb{R}^d$ such that $\widehat{\varphi}$ is compactly supported and $\|\varphi\|_{\mathrm{L}^2(\mathbb{R}^d)}=1$. For $L\geq1$, put $\varphi_L(x)=\varphi(L^{-1}x)$, so that $L^{-d}|\widehat{\varphi_L}|^2$ is an approximate identity at the origin. For $L$ sufficiently large, the sets $k+\supp\widehat{\varphi_L}$, $k\in A$, are pairwise disjoint and are contained in $[-N-1,N+1]^d$. Define
\[ F_L(\xi):=\sum_{k\in A}a_k\widehat{\varphi_L}(\xi-k) \]
and let
\[ u_L(x,t):=\int_{\mathbb{R}^d}F_L(\xi)e^{2\pi i(x\cdot\xi+t|\xi|^2)}\dd\xi. \]
We apply the Euclidean estimates to $u_L$ with the Euclidean weight equal to $w(x,t)\mathbbm{1}_{[0,1]}(t)$.

First,
\begin{align*}
L^{-d}\int_0^1\int_{\mathbb{R}^d}|u_L(x,t)|^2w(x,t)\dd x\dd t
&=\int_{\mathbb{T}}\int_{\mathbb{T}^d}L^{-d}\sum_{j\in\mathbb{Z}^d}|u_L(x+j,t)|^2w(x,t)\dd x\dd t.
\end{align*}
For fixed $(x,t)\in\mathbb{T}^{d+1}$, the quantity $u_L(x+j,t)$ is the $j$-th Fourier coefficient, in the variable $\eta\in[-\tfrac12,\tfrac12]^d$, of
\[ \eta\mapsto \sum_{k\in A}a_k e^{2\pi i(x\cdot(\eta+k)+t|\eta+k|^2)}\widehat{\varphi_L}(\eta), \]
provided $L$ is large enough that $\supp\widehat{\varphi_L}\subseteq[-\tfrac12,\tfrac12]^d$. By Parseval's identity,
\begin{align*}
L^{-d}\sum_{j\in\mathbb{Z}^d}|u_L(x+j,t)|^2
&=\int_{[-\tfrac12,\tfrac12]^d}\left|\sum_{k\in A}a_k e^{2\pi i(x\cdot(\eta+k)+t|\eta+k|^2)}\right|^2L^{-d}|\widehat{\varphi_L}(\eta)|^2\dd\eta.
\end{align*}
Since $L^{-d}|\widehat{\varphi_L}|^2$ is an approximate identity of mass $1$, this converges uniformly in $(x,t)$ to $|E_Aa(x,t)|^2$. Hence
\begin{equation}\label{eq:euc-per-left-limit}
L^{-d}\int_0^1\int_{\mathbb{R}^d}|u_L(x,t)|^2w(x,t)\dd x\dd t
\longrightarrow
\int_{\mathbb{T}}\int_{\mathbb{T}^d}|E_Aa(x,t)|^2w(x,t)\dd x\dd t.
\end{equation}
Also, by Plancherel's theorem and the disjointness of the translated supports,
\begin{equation}\label{eq:euc-per-norm-limit}
L^{-d}\|u_L(\cdot,0)\|_{\mathrm{L}^2(\mathbb{R}^d)}^2
=L^{-d}\|F_L\|_{\mathrm{L}^2(\mathbb{R}^d)}^2
=\sum_{k\in A}|a_k|^2.
\end{equation}
Finally, if
\[ A_L:=\bigcup_{k\in A}(k+\supp\widehat{\varphi_L}), \]
then the uniform continuity of $X_w$ gives
\begin{equation}\label{eq:euc-per-xray-limit}
\sup_{v\in A_L}X_w(v)
\longrightarrow \max_{k\in A}X_w(k)
=\sup_{\substack{x\in\mathbb{T}^d\\ k\in A}}\rho^{\ast}_{\mathbb{Z}^d}w(x,k).
\end{equation}
Applying the Euclidean Mizohata--Takeuchi inequality with constant $\mathfrak{M}_{\mathbb{R}^d}(N+1)$ gives
\[ L^{-d}\int_0^1\int_{\mathbb{R}^d}|u_L(x,t)|^2w(x,t)\dd x\dd t
\leq \mathfrak{M}_{\mathbb{R}^d}(N+1)\sup_{v\in A_L}X_w(v)
L^{-d}\|u_L(\cdot,0)\|_{\mathrm{L}^2(\mathbb{R}^d)}^2. \]
Letting $L\to\infty$ and using \eqref{eq:euc-per-left-limit}, \eqref{eq:euc-per-norm-limit} and \eqref{eq:euc-per-xray-limit} proves \eqref{eq:periodic-MT-finite}.

For the Stein-type estimate, applying \eqref{eq:euc-Stein} with constant $\mathfrak{S}_{\mathbb{R}^d}(N+1)$ gives
\[ L^{-d}\int_0^1\int_{\mathbb{R}^d}|u_L(x,t)|^2w(x,t)\dd x\dd t
\leq \mathfrak{S}_{\mathbb{R}^d}(N+1)L^{-d}\int_{\mathbb{R}^d}|F_L(v)|^2X_w(v)\dd v. \]
The right-hand side converges to
\[ \mathfrak{S}_{\mathbb{R}^d}(N+1)
\sum_{k\in A}|a_k|^2\sup_{x\in\mathbb{T}^d}\rho^{\ast}_{\mathbb{Z}^d}w(x,k), \]
because the translated supports of $\widehat{\varphi_L}$ are disjoint and $X_w$ is uniformly continuous near $A$. Together with \eqref{eq:euc-per-left-limit}, this proves \eqref{eq:periodic-Stein-finite} and completes the proof of Theorem~\ref{thm:quant-transference-intro}.

\subsection{From periodic to Euclidean}\label{Sect:pertoEuc}
Suppose that the periodic Stein-type and Mizohata--Takeuchi inequalities \eqref{eq:per-Stein} and \eqref{eq:per-MT} hold, or equivalently that  \eqref{eq:periodic-MT-finite} and \eqref{eq:periodic-Stein-finite} hold with a constant $C$ in place of $\mathfrak{M}_{\mathbb{R}^d}(N+1)$ and $\mathfrak{S}_{\mathbb{R}^d}(N+1)$, respectively. Here we show that the global Euclidean Stein-type and Mizohata--Takeuchi inequalities \eqref{eq:euc-Stein-glo} and \eqref{eq:euc-MT-glo} may be recovered. Since these are known to be false (disproved by Cairo in \cite{Cairo}), this provides an alternative way to conclude the failure of the global periodic estimates \eqref{eq:per-Stein} and \eqref{eq:per-MT}. As we shall see, our argument relies heavily on scaling and approximation, meaning that we are unable to conclude a reverse form of the finitary (band-limited) Theorem~\ref{thm:quant-transference-intro}. 

We first record the scaling of the periodic estimates. For $T>0$, let
\[ Q_T:=[-T/2,T/2]^d\times[-T^2/2,T^2/2]. \]

\begin{lemma}
Let $A\subseteq\mathbb{Z}^d$ be finite, and let $w$ be a nonnegative function on $\mathbb{R}^d\times\mathbb{R}$ that is periodic with spatial period $T$ in each coordinate and temporal period $T^2$. If the finite-frequency periodic Mizohata--Takeuchi estimate holds with constant $C$, then
\begin{align}\label{eq:scaled-periodic-MT}
\int_{Q_T}&\bigg|\frac{1}{T^d}\sum_{\xi\in T^{-1}A}a_\xi e^{2\pi i(x\cdot\xi+t|\xi|^2)}\bigg|^2w(x,t)\dd x\dd t\notag\\
&\leq C\sup_{\substack{x\in\mathbb{R}^d\\ \xi\in T^{-1}A}}\int_{-T^2/2}^{T^2/2}w(x-2t\xi,t)\dd t\,\frac{1}{T^d}\sum_{\xi\in T^{-1}A}|a_\xi|^2.
\end{align}
Similarly, if the finite-frequency periodic Stein-type estimate holds with constant $C$, then
\begin{align}\label{eq:scaled-periodic-Stein}
\int_{Q_T}&\bigg|\frac{1}{T^d}\sum_{\xi\in T^{-1}A}a_\xi e^{2\pi i(x\cdot\xi+t|\xi|^2)}\bigg|^2w(x,t)\dd x\dd t\notag\\
&\leq \frac{C}{T^d}\sum_{\xi\in T^{-1}A}|a_\xi|^2\sup_{x\in\mathbb{R}^d}\int_{-T^2/2}^{T^2/2}w(x-2t\xi,t)\dd t.
\end{align}
\end{lemma}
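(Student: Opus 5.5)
The plan is a direct change of variables reducing \eqref{eq:scaled-periodic-MT} and \eqref{eq:scaled-periodic-Stein} to the unscaled finite-frequency estimates \eqref{eq:periodic-MT-finite} and \eqref{eq:periodic-Stein-finite}, with $C$ in place of the Euclidean constants. This is the parabolic scaling $x=Ty$, $t=T^2s$, with $(y,s)$ ranging over $[-1/2,1/2]^d\times[-1/2,1/2]$, a fundamental domain of $\mathbb{T}^{d+1}$. We define the rescaled weight $\widetilde w(y,s):=w(Ty,T^2s)$. By the assumed periodicity of $w$ (period $T$ in each spatial coordinate, $T^2$ in time), $\widetilde w$ is $1$-periodic in all variables, so it is a nonnegative weight on $\mathbb{T}^{d+1}$. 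Writing $\xi=k/T$ with $k\in A$ and $a_\xi=:b_k$, the phase transforms as
\[ x\cdot\xi+t|\xi|^2 = Ty\cdot k/T+T^2s|k|^2/T^2 = y\cdot k+s|k|^2. \]
Hence the sum inside the modulus on the left equals $T^{-d}E_Ab(y,s)$.

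The left-hand side then becomes, with Jacobian $T^{d+2}$,
\[ T^{d+2}\cdot T^{-2d}\int_{\mathbb{T}}\int_{\mathbb{T}^d}|E_Ab(y,s)|^2\widetilde w(y,s)\dd y\dd s. \]
We apply the periodic estimate to $E_Ab$ and $\widetilde w$. The X-ray term must also be transformed. For $x=Ty_0$,
\[ \int_{-T^2/2}^{T^2/2}w(x-2t\xi,t)\dd t = T^2\int_{-1/2}^{1/2}w\bigl(T(y_0-2sk),T^2s\bigr)\dd s = T^2\int_0^1\widetilde w(y_0-2sk,s)\dd s = T^2\rho^\ast_{\mathbb{Z}^d}\widetilde w(y_0,k). \]
Here the shift of the $s$-interval to $[0,1]$ uses the $1$-periodicity of $\widetilde w$ in $s$, which holds jointly with the $y$-shift because $k\in\mathbb{Z}^d$. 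Taking suprema over $x\in\mathbb{R}^d$ corresponds to suprema over $y_0\in\mathbb{R}^d$, equivalently over $\mathbb{T}^d$.

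Combining these, the periodic estimate gives
\[ \text{LHS}\le T^{2-d}\,C\sup_{y_0,k}\rho^\ast_{\mathbb{Z}^d}\widetilde w(y_0,k)\sum_{k}|b_k|^2 = C\sup_{x,\xi}\Bigl(\int w\Bigr)\,T^{-d}\sum_{\xi}|a_\xi|^2. \]
The factor $T^{2}$ is absorbed into the X-ray integral, and the powers of $T$ match exactly. The Stein-type case is identical, except that the supremum sits inside the sum over $k$.

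The only delicate points are bookkeeping: matching the powers of $T$ as above, and the interval-shifting/periodicity step. We also need to ensure that the reduction to continuous weights is not required, since the periodic estimates are assumed for all nonnegative weights.
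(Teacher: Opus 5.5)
Your proposal is correct and is essentially the paper's proof: the parabolic change of variables $x=Ty$, $t=T^2s$, $k=T\xi$, then the unscaled periodic estimate, then rescaling the X-ray integral (using $1$-periodicity in $s$ to recentre the interval). The only difference is cosmetic: the paper puts the Jacobian factor into the weight, $w_T(y,s)=T^2w(Ty,T^2s)$, whereas you keep $\widetilde w(y,s)=w(Ty,T^2s)$ and carry $T^2$ separately, which is equivalent because both sides are linear in the weight.
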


\begin{proof}
We prove \eqref{eq:scaled-periodic-Stein}; the proof of \eqref{eq:scaled-periodic-MT} is the same with the supremum taken outside the sum. Define
\[ w_T(y,s):=T^2w(Ty,T^2s), \]
which is $1$-periodic in $(y,s)$. Changing variables $x=Ty$, $t=T^2s$ and writing $m=T\xi$, we obtain
\begin{align*}
& \int_{Q_T} \bigg|\frac{1}{T^d}\sum_{\xi\in T^{-1}A}a_\xi e^{2\pi i(x\cdot\xi+t|\xi|^2)}\bigg|^2 w(x,t)\dd x\dd t\\
& = \frac{1}{T^d} \int_{\mathbb{T}}\int_{\mathbb{T}^d} \bigg|\sum_{m\in A}a_{m/T}e^{2\pi i(y\cdot m+s|m|^2)}\bigg|^2 w_T(y,s)\dd y\dd s.
\end{align*}
Applying the finite-frequency periodic Stein-type estimate with constant $C$ to the last integral gives
\[ \frac{C}{T^d}\sum_{m\in A}|a_{m/T}|^2\sup_{y\in\mathbb{T}^d}\int_{\mathbb{T}}w_T(y-2sm,s)\dd s. \]
The last X-ray transform scales as
\[ \sup_{y\in\mathbb{T}^d}\int_{\mathbb{T}}w_T(y-2sm,s)\dd s
=\sup_{x\in\mathbb{R}^d}\int_{-T^2/2}^{T^2/2}w(x-2t(m/T),t)\dd t, \]
where the interval of integration may be centred because the integrand is $1$-periodic in $s$. This proves \eqref{eq:scaled-periodic-Stein}.
\end{proof}

We now pass to the Euclidean limit. 
Evidently it suffices to establish the manifestly stronger global inequalities \eqref{eq:euc-Stein-glo} and \eqref{eq:euc-MT-glo} from their periodic counterparts \eqref{eq:per-Stein} and \eqref{eq:per-MT}. By the parabolic scale invariance of \eqref{eq:euc-Stein-glo} and \eqref{eq:euc-MT-glo} it suffices to consider initial data with $\supp\widehat{u_0}\subseteq B(0,1)$.
We may also restrict attention to continuous compactly supported (Euclidean) weights $w$, as the general case then follows by standard approximation arguments.

Next we observe that we may further assume that $\widehat{u_0}$ is smooth. If $\widehat{u_{n,0}}\to\widehat{u_0}$ in $\mathrm{L}^2$ with $\supp\widehat{u_{n,0}}\subseteq B(0,1)$ for all $n$, and
\[ u_n(x,t):=\int_{\mathbb{R}^d}\widehat{u_{n,0}}(\xi)e^{2\pi i(x\cdot\xi+t|\xi|^2)}\dd\xi, \]
then $u_n\to u$ uniformly, and so
\[ \int_{\mathbb{R}}\int_{\mathbb{R}^d}|u_n(x,t)|^2w(x,t)\dd x\dd t
\longrightarrow
\int_{\mathbb{R}}\int_{\mathbb{R}^d}|u(x,t)|^2w(x,t)\dd x\dd t \]
by the assumed regularity of $w$.
Similarly, the right-hand side of \eqref{eq:euc-Stein-glo} also converges suitably by the uniform continuity of the map
\begin{equation}\label{eq:Xraydef}
X_w(v):= \sup_{x\in\mathbb{R}^d}\int_{\mathbb{R}}w(x-2tv,t)\dd t,
\end{equation} 
recalling that $w$ is continuous and compactly supported. For the right-hand side of \eqref{eq:euc-MT-glo} we make the additional assumption, as we may, that the support of $\widehat{u_{n,0}}$ is decreasing to that of $\widehat{u_0}$ as $n\rightarrow\infty$, so that
\[ \sup_{\substack{x\in\mathbb{R}^d\\ v\in\supp\widehat{u_{n,0}}}}\!\!\!\!\rho^{\ast}w(x,v)\rightarrow \sup_{\substack{x\in\mathbb{R}^d\\ v\in\supp\widehat{u_0}}}\!\!\!\!\rho^{\ast}w(x,v), \]
again by the uniform continuity of $X_w$.
We may therefore assume that $\widehat{u_0}$ is supported in $B(0,1)$ and smooth.

Fix a continuous nonnegative $w$ supported in $[-R,R]^{d+1}$, for some $R>0$.
For $T>0$ define the full space-time periodisation
\[ w^T(x,t):=\sum_{j\in\mathbb{Z}^d}\sum_{l\in\mathbb{Z}}w(x+Tj,t+T^2l). \]
For $T$ sufficiently large, $w^T=w$ on $Q_T$. Moreover, if $v\in B(0,1)\cap T^{-1}\mathbb{Z}^d$, then the compatibility of the velocity $v=m/T$ with the periods $T$ and $T^2$, together with the support of $w$, gives
\begin{equation}\label{eq:remaining}
\sup_{x\in\mathbb{R}^d} \int_{-T^2/2}^{T^2/2} w^T(x-2tv,t)\dd t=X_w(v)
\end{equation}
for all sufficiently large $T$, uniformly in such $v$.
This follows from the support hypothesis on $w$, which means that the sum
\[ \int_{-T^2/2}^{T^2/2} w^T(x-2tv,t) \dd t=\int_{-R}^{R}w^T(x-2tv,t)\dd t=\sum_{j\in\mathbb{Z}^d}\int_{-R}^{R}w(x+Tj-2tv,t)\dd t \]
has at most one nonzero term for each $x\in\mathbb{R}^d$, $|v|\leq 1$ and $T$ sufficiently large compared with $R$. This reduces \eqref{eq:remaining} to the definition \eqref{eq:Xraydef}.
%to showing that
%\[ \sup_{x\in\mathbb{R}^d} \int_{-R}^{R}w(x-2tv,t)\dd t= X_w(v), \]
%which is, of course, evident.

The Riemann sums
\[ U_T(x,t):=\frac{1}{T^d}\sum_{\xi\in T^{-1}\mathbb{Z}^d}\widehat{u_0}(\xi)e^{2\pi i(x\cdot\xi+t|\xi|^2)} \]
are finite and converge uniformly to
\[ u(x,t)=\int_{\mathbb{R}^d}\widehat{u_0}(\xi)e^{2\pi i(x\cdot\xi+t|\xi|^2)}\dd\xi \]
on the compact support of $w$. Hence
\begin{equation}\label{eq:r-sum-left}
\int_{\mathbb{R}}\int_{\mathbb{R}^d}|u(x,t)|^2w(x,t)\dd x\dd t
=\lim_{T\to\infty}\int_{Q_T}|U_T(x,t)|^2w^T(x,t)\dd x\dd t.
\end{equation}
Applying \eqref{eq:scaled-periodic-Stein} to the right-hand side and using \eqref{eq:remaining}, we get
\begin{align*}
\int_{\mathbb{R}}\int_{\mathbb{R}^d}|u(x,t)|^2w(x,t)\dd x\dd t
&\leq C\limsup_{T\to\infty}\frac{1}{T^d}\sum_{\xi\in T^{-1}\mathbb{Z}^d}|\widehat{u_0}(\xi)|^2X_w(\xi)\\
&= C\int_{\mathbb{R}^d}|\widehat{u_0}(v)|^2\sup_{x\in\mathbb{R}^d}\int_{\mathbb{R}}w(x-2tv,t)\dd t\dd v,
\end{align*}
establishing \eqref{eq:euc-Stein-glo}.

The Mizohata--Takeuchi estimate \eqref{eq:euc-MT-glo} follows in a similar way from \eqref{eq:scaled-periodic-MT}. Indeed,
\[ \frac{1}{T^d}\sum_{\xi\in T^{-1}\mathbb{Z}^d}|\widehat{u_0}(\xi)|^2\longrightarrow \|u_0\|_{\mathrm{L}^2(\mathbb{R}^d)}^2, \]
and, by the continuity of $X_w$ on $B(0,1)$,
\[ \sup_{\xi\in T^{-1}\mathbb{Z}^d\cap\,\supp\widehat{u_0}}X_w(\xi)
\longrightarrow \sup_{v\in\supp\widehat{u_0}}X_w(v). \]
The inequality \eqref{eq:euc-MT-glo} now follows
from \eqref{eq:scaled-periodic-MT} applied to \eqref{eq:r-sum-left}.

%%%%%

\section{Periodic constants from configuration lower bounds}
\label{sec:config-lower}

We now prove Theorem~\ref{thm:config-lower-intro}. 
Let $A\subseteq\mathbb{Z}^2$ be a nonempty and finite set of frequencies.
Then
\[ u(x,t) = u_A(x,t) := \sum_{k\in A} e^{2\pi i(x\cdot k+t|k|^2)} \]
is the solution \eqref{eq:per-solution} of the periodic Schr\"{o}dinger equation with the initial data
\[ u_0(x) = u_A(x,0) = \sum_{k\in A} e^{2\pi i x\cdot k}, \]
which clearly has Fourier coefficients 
\[ \widehat{u_0}(k) = \begin{cases}
1 & \text{if } k\in A,\\
0 & \text{if } k\not\in A.
\end{cases} \]

\begin{lemma}\label{lem:counts}
For a finite $A\subseteq\mathbb{Z}^2$,
\begin{equation}\label{eq:L4rect}
\|u_A\|_{\mathrm{L}^4(\mathbb{T}^2\times\mathbb{T})}^4 = \#\Box(A).
\end{equation}
Moreover, for every $k\in A$,
\begin{equation}\label{eq:Xrayiso}
\sup_{x\in\mathbb{T}^2}\rho^{\ast}_{\mathbb{Z}^2}(|u_A|^2)(x,k) = \#\triangle^k(A).
\end{equation}
\end{lemma}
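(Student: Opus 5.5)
Both identities come from expanding $|u_A|^2$ as a finite trigonometric sum and using orthogonality of characters on the torus, so the plan is to compute directly. Write
\[ |u_A(x,t)|^2=\sum_{k_1,k_2\in A} e^{2\pi i(x\cdot(k_1-k_2)+t(|k_1|^2-|k_2|^2))}. \]

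For \eqref{eq:L4rect}, write $|u_A|^4=|u_A|^2\,\overline{|u_A|^2}$ and integrate over $\mathbb{T}^2\times\mathbb{T}$. Orthogonality gives the number of $(k_1,k_2,k_3,k_4)\in A^4$ with
\[ k_1-k_2+k_3-k_4=0, \qquad |k_1|^2-|k_2|^2+|k_3|^2-|k_4|^2=0. \]
Substitute $k_3=k_2+k_4-k_1$ into the second equation. A short expansion turns it into $2(k_2-k_1)\cdot(k_4-k_1)=0$, since
\[ |k_2+k_4-k_1|^2-|k_2|^2-|k_4|^2+|k_1|^2=2(k_2-k_1)\cdot(k_4-k_1). \]
This is exactly the defining condition of $\Box(A)$ (up to relabelling $k_3$ as the vertex opposite $k_1$). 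The count therefore equals $\#\Box(A)$.

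For \eqref{eq:Xrayiso}, compute $\rho^{\ast}_{\mathbb{Z}^2}(|u_A|^2)(x,k)=\int_0^1|u_A(x-2tk,t)|^2\dd t$ termwise. The $(k_1,k_2)$ term equals
\[ e^{2\pi i x\cdot(k_1-k_2)}\int_0^1 e^{2\pi i t(|k_1|^2-|k_2|^2-2k\cdot(k_1-k_2))}\dd t. \]
The frequency is an integer, so the integral is $1$ if it vanishes and $0$ otherwise. The vanishing condition is
\[ (k_1-k_2)\cdot(k_1+k_2-2k)=0, \]
which is precisely $(2k-k_1-k_2)\cdot(k_2-k_1)=0$, i.e.\ $(k_1,k_2,k)\in\triangle^k(A)$. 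Hence
\[ \rho^{\ast}_{\mathbb{Z}^2}(|u_A|^2)(x,k)=\sum_{(k_1,k_2,k)\in\triangle^k(A)} e^{2\pi i x\cdot(k_1-k_2)}. \]

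The only step needing a small argument is taking the supremum over $x$. Each summand has modulus $1$, so the sum is at most $\#\triangle^k(A)$ for every $x$. At $x=0$ every phase equals $1$, so the bound is attained. This gives the supremum exactly, with no genuine obstacle. The main care lies in the sign and normalisation conventions: the factor $2$ in $\rho^\ast_{\mathbb{Z}^2}$ and the sign of $x-2tk$ must combine to produce $2k\cdot(k_1-k_2)$. Otherwise the proof is bookkeeping.
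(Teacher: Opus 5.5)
Your proof is correct and follows the paper's argument essentially verbatim. You expand $|u_A|^4$ and use orthogonality, substituting $k_3=k_2+k_4-k_1$ to reach the rectangle conditions; you then compute the X-ray transform termwise, identify the surviving pairs with $\triangle^k(A)$, and bound the resulting exponential sum (real, since it is an integral of a nonnegative weight) by its number of terms, with equality at $x=0$. No relabelling is needed in the first part, because $k_1-k_2+k_3-k_4=0$ is literally the condition $k_1+k_3=k_2+k_4$ in the definition of $\Box(A)$.
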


\begin{proof}
Expanding the fourth power and integrating,
\begin{align*} 
\|u_A\|_{\mathrm{L}^4(\mathbb{T}^2\times\mathbb{T})}^4 =   \int_{\mathbb{T}} \int_{\mathbb{T}^2} \sum_{k_1,k_2,k_3,k_4\in A} & e^{2\pi i(x\cdot k_1+t|k_1|^2)} e^{-2\pi i(x\cdot k_2+t|k_2|^2)} \\
& e^{2\pi i(x\cdot k_3+t|k_3|^2)} e^{-2\pi i(x\cdot k_4+t|k_4|^2)} \dd x \dd t, 
\end{align*}
gives precisely the number of quadruples $(k_1,k_2,k_3,k_4)$ in $A^4$ satisfying
\[ k_1+k_3=k_2+k_4, \quad |k_1|^2+|k_3|^2=|k_2|^2+|k_4|^2. \]
Writing $k_3=k_2+k_4-k_1$, the second condition becomes
\[ (k_2-k_1)\cdot(k_4-k_1)=0. \]
This identifies these quadruples with the ordered rectangles in $\Box(A)$, proving \eqref{eq:L4rect}.

For the second part, observe
\begin{align*}
\rho^{\ast}_{\mathbb{Z}^2}(|u_A|^2)(x,k)
& =\int_0^1 \sum_{k_1,k_2\in A} e^{2\pi i((x-2tk)\cdot k_1+t|k_1|^2)} e^{-2\pi i((x-2tk)\cdot k_2+t|k_2|^2)} \dd t \\
& =\sum_{\substack{k_1,k_2\in A\\
|k_1|^2-|k_2|^2-2k\cdot(k_1-k_2)=0}}e^{2\pi ix\cdot(k_1-k_2)}.
\end{align*}
The last sum is real-valued; it is bounded by the total number of surviving pairs $(k_1,k_2)$ and equality is attained at $x=0$. The summing condition can be rewritten as
\[ (k_1-k_2)\cdot(k_1+k_2-2k)=0. \]
This proves \eqref{eq:Xrayiso}.
\end{proof}

\begin{proof}[Proof of Theorem~\ref{thm:config-lower-intro}]
We take $u=u_A$ and $u_0$ as before, and choose the weight
\begin{equation}\label{eq:2Dweight}
w=|u_A|^2.
\end{equation}
The periodic Stein-type and Mizohata--Takeuchi estimates \eqref{eq:per-Stein} and \eqref{eq:per-MT} applied to $u=u_A$ then read
\begin{equation}\label{eq:per-Stein2}
\|u_A\|_{\mathrm{L}^4(\mathbb{T}^2\times\mathbb{T})}^4 \leq C \sum_{k\in A} \sup_{x\in\mathbb{T}^2} \rho^{\ast}_{\mathbb{Z}^2}(|u_A|^2)(x,k)
\end{equation}
and
\begin{equation}\label{eq:per-MT2}
\|u_A\|_{\mathrm{L}^4(\mathbb{T}^2\times\mathbb{T})}^4 \leq C \sup_{\substack{x\in\mathbb{T}^2\\ k\in A}} \rho^{\ast}_{\mathbb{Z}^2}(|u_A|^2)(x,k) \,\|u_0\|_{\mathrm{L}^2(\mathbb{T}^2)}^2.
\end{equation}
Applying Lemma~\ref{lem:counts}, observing $\|u_0\|_{\mathrm{L}^2(\mathbb{T}^2)}^2=\#A$ and varying $A$, inequality \eqref{eq:per-MT2} becomes
\begin{align*} 
\#\Box(A) \leq \mathfrak{M}_{\mathbb{T}^2}(N) \,\#A \,\max_{k\in A} \#\triangle^k(A) & \quad \text{if } A\subseteq\mathbb{Z}^2\cap[-N,N]^2, \\
\#\Box(A) \leq \widetilde{\mathfrak{M}}_{\mathbb{T}^2}(n) \,\#A \,\max_{k\in A} \#\triangle^k(A) & \quad \text{if } A\subseteq\mathbb{Z}^2,\ \#A=n. 
\end{align*}
Dividing by the quantities on the right-hand sides that depend on $A$ and taking suprema over $A$, we deduce \eqref{eq:MT-config-lower-intro}.
Similarly, inequality \eqref{eq:per-Stein2}, together with
\[ \sum_{k\in A} \sup_{x\in\mathbb{T}^2}\rho^{\ast}_{\mathbb{Z}^2}(|u_A|^2)(x,k) = \#\triangle(A), \]
implies
\begin{align*} 
\#\Box(A) \leq \mathfrak{S}_{\mathbb{T}^2}(N) \,\#\triangle(A) & \quad \text{if } A\subseteq\mathbb{Z}^2\cap[-N,N]^2, \\
\#\Box(A) \leq \widetilde{\mathfrak{S}}_{\mathbb{T}^2}(n) \,\#\triangle(A) & \quad \text{if } A\subseteq\mathbb{Z}^2,\ \#A=n. 
\end{align*}
These two estimates immediately establish \eqref{eq:Stein-config-lower-intro}.
\end{proof}
We remark that tomographic identities for extension operators, similar to \eqref{eq:Xrayiso}, have been the subject of some investigation recently in \cite{BN,BNS,BGNO} in the setting of continuous (as opposed to discrete) extension operators.
%%%%%

\section{Trigonometric weight obstructions}
\label{sec:d1-log-weight}

This section proves Theorem~\ref{thm:d1-log-intro}. 
The one-dimensional weight can be built from subset sums of the space-time parabola points $(2^j,4^j)$; see Figure~\ref{fig:pts_parabola}. At every tested ``velocity,'' the X-ray transform sees only diagonal pairs of subset sums, while multiplication by the full one-dimensional parabola sum creates many repeated Fourier frequencies. 
The actual construction that we present can be thought of as a discrete variant of Cairo's counterexample~\cite{Cairo}. However, the setting of $\mathbb{T}^{d+1}$, once again, allows us to use purely combinatorial reasoning.
Also, since we aim to give a $d$-dimensional example that upgrades the $(\log N)$-growth to $(\log N)^d$, we actually need to use a variant of the tensor-product construction.

In the formulation where the constants are expressed in terms of $n=\#\supp\widehat{u_0}$, our example even exhibits the announced linear growth \eqref{eq:justlinear}. 
After proving this lower bound, we record a trivial but useful converse, as the Cauchy--Schwarz inequality and the average of the X-ray transform easily give the matching upper bound in \eqref{eq:justlinear}.

\subsection{Construction of the higher-dimensional weight}
Fix $d\ge1$ and a vector of positive integers $\mathbf{m}=(m_1,\ldots,m_d)$.
For each coordinate put $r_l:=\lfloor m_l/2\rfloor$. Let $\mathcal{J}_{l,s}$ be the family of $s$-element subsets of $\{1,2,\ldots,m_l\}$ and, for a $d$-tuple of indices $\mathbf{s}=(s_1,\ldots,s_d)$, write 
\[ \mathcal{J}_{\mathbf{s}}:=\mathcal{J}_{1,s_1}\times\cdots\times\mathcal{J}_{d,s_d}. \]
We also write $\mathbf{r}=(r_1,\ldots,r_d)$ and $\mathbf{1}=(1,\ldots,1)$.
Define
\[ M_l:=m_1+\cdots+m_{l-1} \]
for $l=1,\ldots,d$, with the convention $M_1=0$. The frequency set in our example will be the discrete Cartesian product
\[ A_{\mathbf{m}} := \bigl\{(2^{M_1+j_1},2^{M_2+j_2},\ldots,2^{M_d+j_d}) \,:\, j_l\in\{1,2,\ldots,m_l\}\text{ for every }l=1,\ldots,d\bigr\} \subseteq\mathbb{Z}^d. \]
Note that
\[ \#A_{\mathbf{m}}=m_1\cdots m_d \quad\text{and}\quad A_{\mathbf{m}}\subseteq[0,2^{m_1+\cdots+m_d}]^d. \]
Consider the solution \eqref{eq:per-solution} of the periodic Schr\"{o}dinger equation given by the formula
\begin{equation}\label{eq:logNsolution}
u(x,t) = u_{\mathbf{m}}(x,t) :=\sum_{k\in A_{\mathbf{m}}}e^{2\pi i(k\cdot x+|k|^2t)}
= \sum_{j_1=1}^{m_1} \cdots \sum_{j_d=1}^{m_d} e^{2\pi i\sum_{l=1}^d (2^{M_l+j_l}x_l+4^{M_l+j_l}t)}.
\end{equation}
Then $u_0(x) = u_{\mathbf{m}}(x,0)$ has frequency support precisely $A_{\mathbf{m}}$ and
\[ \|u_0\|_{\mathrm{L}^2(\mathbb{T}^d)}^2 = \#A_{\mathbf{m}} = m_1\cdots m_d. \]

For each $l=1,2,\ldots,d$ and every subset $J\subseteq\{1,2,\ldots,m_l\}$ put
\[ X_{l,J}:=\sum_{j\in J} 2^{M_l+j}, \quad Y_{l,J}:=\sum_{j\in J} 4^{M_l+j}; \]
see Figure~\ref{fig:pts_parabola} again.
If $\mathbf{J}=(J_1,\ldots,J_d)$ is a $d$-tuple of such subsets $J_l\subseteq\{1,2,\ldots,m_l\}$, define
\[ X_{\mathbf{J}}:=(X_{1,J_1},\ldots,X_{d,J_d})\in\mathbb{Z}^d. \]
Let us first define an auxiliary $1$-periodic trigonometric polynomial by
\[ f_{\mathbf{m}}(x,t) := \sum_{\mathbf{J}\in\mathcal{J}_{\mathbf{r}}} e^{2\pi i (X_{\mathbf{J}}\cdot x+ t\sum_{l=1}^dY_{l,J_l})} = \prod_{l=1}^d \Bigl( \sum_{J\in\mathcal{J}_{l,r_l}} e^{2\pi i(X_{l,J}x_l+Y_{l,J}t)} \Bigr), \]
and then define the desired weight as
\begin{equation}\label{eq:logNweight}
w(x,t) = w_{\mathbf{m}}(x,t) := |f_{\mathbf{m}}(x,t)|^2.
\end{equation}

\begin{figure}
\begin{center}
\begin{tikzpicture}[x=0.09cm, y=0.0015cm]
    \draw[->, thick] (0,0) -- (135,0) node[right] {$X_{1,J}$};
    \draw[->, thick] (0,0) -- (0, 5400) node[above] {$Y_{1,J}$};
    \draw[dashed, darkgray, very thick, domain=0:73.5, samples=100] plot (\x, {\x*\x});
    \node[darkgray, left] at (60, 4000) {$Y = X^2$};
    \foreach \x in {20, 40, 60, 80, 100, 120} {
        \draw[thick] (\x, 80) -- (\x, -80) node[below=4pt] {\x};
    }
    \foreach \y in {1000, 2000, 3000, 4000, 5000} {
        \draw[thick] (1.5, \y) -- (-1.5, \y) node[left=4pt] {\y};
    }
    \node[left=4pt] at (0,0) {0};
    \foreach \bA in {0,1} {
    \foreach \bB in {0,1} {
    \foreach \bC in {0,1} {
    \foreach \bD in {0,1} {
    \foreach \bE in {0,1} {
    \foreach \bF in {0,1} {
        \pgfmathtruncatemacro{\Sum}{\bA+\bB+\bC+\bD+\bE+\bF}
        \ifnum\Sum=3 
        \else
            \pgfmathsetmacro{\X}{\bA*2 + \bB*4 + \bC*8 + \bD*16 + \bE*32 + \bF*64}
            \pgfmathsetmacro{\Y}{\bA*4 + \bB*16 + \bC*64 + \bD*256 + \bE*1024 + \bF*4096}
            \fill[gray, opacity=0.8] (\X, \Y) circle (1.5pt);
        \fi
    }}}}}}
    \foreach \bA in {0,1} {
    \foreach \bB in {0,1} {
    \foreach \bC in {0,1} {
    \foreach \bD in {0,1} {
    \foreach \bE in {0,1} {
    \foreach \bF in {0,1} {
        \pgfmathtruncatemacro{\Sum}{\bA+\bB+\bC+\bD+\bE+\bF}
        \ifnum\Sum=3
            \pgfmathsetmacro{\X}{\bA*2 + \bB*4 + \bC*8 + \bD*16 + \bE*32 + \bF*64}
            \pgfmathsetmacro{\Y}{\bA*4 + \bB*16 + \bC*64 + \bD*256 + \bE*1024 + \bF*4096}
            \fill[black] (\X, \Y) circle (1.5pt);
        \fi
    }}}}}}
\end{tikzpicture}
\end{center}
\caption{Illustration of points $(X_{1,J},Y_{1,J})$ for $d=1$ and $m_1=6$. Darker points correspond to subsets $J\subseteq\{1,\ldots,6\}$ with $\#J=3$.}
\label{fig:pts_parabola}
\end{figure}

The weight $w_{\mathbf{m}}$ has the following properties.

\begin{lemma}\label{lm:dyadic-test-estimates}
For every $(x,k)\in\mathbb{T}^d\times A_{\mathbf{m}}$ we have
\begin{equation}\label{eq:xray-vector-exact}
\rho^{\ast}_{\mathbb{Z}^d}w_{\mathbf{m}}(x,k) = \prod_{l=1}^d\binom{m_l}{r_l}.
\end{equation}
Also,
\begin{equation}\label{eq:numerator-vector-lower}
\int_{\mathbb{T}} \int_{\mathbb{T}^d} |u_{\mathbf{m}}(x,t)|^2 w_{\mathbf{m}}(x,t)\dd x\dd t \ge \prod_{l=1}^d (r_l+1)^2\binom{m_l}{r_l+1}.
\end{equation}
\end{lemma}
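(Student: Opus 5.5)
The plan is to prove the two assertions of Lemma~\ref{lm:dyadic-test-estimates} by expanding the trigonometric polynomials and computing exactly which frequencies resonate. The only real input is a $2$-adic valuation argument exploiting the dyadic structure of $A_{\mathbf m}$.

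\emph{X-ray identity.} Fix $k=(2^{a_1},\dots,2^{a_d})\in A_{\mathbf m}$, where $a_l=M_l+j_l$. Expanding $w_{\mathbf m}=f_{\mathbf m}\overline{f_{\mathbf m}}$ gives
\[ \rho^*_{\mathbb Z^d}w_{\mathbf m}(x,k)=\sum_{\mathbf J,\mathbf J'\in\mathcal J_{\mathbf r}} e^{2\pi i (X_{\mathbf J}-X_{\mathbf J'})\cdot x}\int_0^1 e^{2\pi i t\Phi}\dd t, \]
with
\[ \Phi=\sum_l\bigl(Y_{l,J_l}-Y_{l,J'_l}-2k_l(X_{l,J_l}-X_{l,J'_l})\bigr). \]
So it suffices to show that $\Phi=0$ forces $\mathbf J=\mathbf J'$; the diagonal then contributes $\#\mathcal J_{\mathbf r}=\prod_l\binom{m_l}{r_l}$, independently of $x$.

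Write $\Phi=\sum_n\varepsilon_n g(n)$, where $n$ ranges over $M_l+\{1,\dots,m_l\}$, $\varepsilon_n\in\{-1,0,1\}$, and
\[ g(n)=4^n-2^{a_l+1}2^n=2^n\bigl(2^n-2^{a_l+1}\bigr) \]
for $n$ in block $l$. Then $g(n)=0$ exactly when $n=a_l+1$. Otherwise its $2$-adic valuation is
\[ v(n)=\begin{cases} 2n & \text{if } n\le a_l,\\ n+a_l+1 & \text{if } n\ge a_l+2. \end{cases} \]
I will check that $v$ is strictly increasing in $n$ across the whole index range. Within a block this is immediate. Across blocks, the values of $v$ in block $l$ are at most $\max(2M_{l+1},\,M_{l+1}+a_l+1)\le 2M_{l+1}+1$, while in block $l+1$ they are at least $2(M_{l+1}+1)$.

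Hence, if some $\varepsilon_n\ne0$ with $g(n)\ne0$, the term of smallest valuation cannot cancel, which is a contradiction. So $\varepsilon_n=0$ except possibly at $n=a_l+1$. But $\sum_{n\in\text{block }l}\varepsilon_n=\#J_l-\#J'_l=0$, so that exceptional $\varepsilon$ vanishes too, and $\mathbf J=\mathbf J'$. This valuation bookkeeping, in particular the degenerate index $n=a_l+1$ and the cross-block comparison, is the step I expect to need the most care.

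\emph{Lower bound.} Write $|u_{\mathbf m}|^2w_{\mathbf m}=|u_{\mathbf m}f_{\mathbf m}|^2$. By Parseval the left-hand side equals $\sum_\xi c_\xi^2$, where $c_\xi$ is the number of pairs $(k,\mathbf J)$ with
\[ \bigl(k+X_{\mathbf J},\,|k|^2+\textstyle\sum_l Y_{l,J_l}\bigr)=\xi. \]
Restrict to pairs with $j_l\notin J_l$ for every $l$. Then $k+X_{\mathbf J}=X_{\mathbf K}$ with $K_l=J_l\cup\{j_l\}$ of size $r_l+1$, and the time frequency is automatically $\sum_l Y_{l,K_l}$. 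Each $\mathbf K$ arises from exactly $\prod_l(r_l+1)$ such pairs, and distinct $\mathbf K$ give distinct $X_{\mathbf K}$ by uniqueness of binary expansions. Here $r_l+1\le m_l$, so these sets are nonempty.

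Discarding all other frequencies then gives
\[ \int_{\mathbb T}\int_{\mathbb T^d}|u_{\mathbf m}|^2w_{\mathbf m}\dd x\dd t\ \ge\ \sum_{\mathbf K}\prod_l(r_l+1)^2=\prod_l(r_l+1)^2\binom{m_l}{r_l+1}, \]
which is \eqref{eq:numerator-vector-lower}.
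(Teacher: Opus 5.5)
Your proposal is correct and follows essentially the same route as the paper. For the X-ray identity, you show that the $2$-adic valuations of the nonzero terms $2^n(2^n-2^{a_l+1})$ are pairwise distinct and then use $\#J_l=\#J'_l$ to kill the one exceptional index. For the lower bound, you count the $\prod_l(r_l+1)$ representations of each frequency $X_{\mathbf K}$, $\mathbf K\in\mathcal{J}_{\mathbf r+\mathbf 1}$, and apply Parseval. Your explicit cross-block comparison of valuations fills in a detail the paper only asserts ("all these numbers are mutually different").
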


\begin{proof}
We first prove \eqref{eq:xray-vector-exact}. Take 
\[ k = \bigl(2^{M_1+q_1},\ldots,2^{M_d+q_d}\bigr)\in A_{\mathbf{m}} \]
determined by some $q_l\in\{1,2,\ldots,m_l\}$; $l=1,\ldots,d$. Expanding $|f_{\mathbf{m}}(x,t)|^2$, we obtain the sum
\[ w_{\mathbf{m}}(x,t) = \sum_{\substack{\mathbf{J}=(J_1,\ldots,J_d)\in\mathcal{J}_{\mathbf{r}} \\ \mathbf{J}'=(J'_1,\ldots,J'_d)\in\mathcal{J}_{\mathbf{r}}}} e^{2\pi i\left((X_{\mathbf{J}}-X_{\mathbf{J}'})\cdot x+ t\sum_{l=1}^d(Y_{l,J_l}-Y_{l,J'_l})\right)}, \]
so that
\begin{equation}\label{eq:rhowm}
\rho^{\ast}_{\mathbb{Z}^d}w_{\mathbf{m}}(x,k) = \sum_{\substack{\mathbf{J}=(J_1,\ldots,J_d)\in\mathcal{J}_{\mathbf{r}} \\ \mathbf{J}'=(J'_1,\ldots,J'_d)\in\mathcal{J}_{\mathbf{r}}}} \int_0^1 e^{2\pi i\left((X_{\mathbf{J}}-X_{\mathbf{J}'})\cdot (x-2tk) + t\sum_{l=1}^d(Y_{l,J_l}-Y_{l,J'_l})\right)} \dd t.
\end{equation}
After we perform the integration in $t$, we conclude that the summand depending on $\mathbf{J}$ and $\mathbf{J}'$ is zero unless
\begin{equation}\label{eq:dyadic-relation0}
\sum_{l=1}^d \bigl(Y_{l,J_l}-Y_{l,J'_l} - 2^{M_l+q_l+1}(X_{l,J_l}-X_{l,J'_l})\bigr) = 0.
\end{equation}
We claim that this cannot happen when $\mathbf{J}\neq\mathbf{J}'$. After this is proved, we will know that only diagonal pairs in \eqref{eq:rhowm} survive, i.e.,
$\rho^{\ast}_{\mathbb{Z}^d}w_{\mathbf{m}}(x,k) = \#\mathcal{J}_{\mathbf{r}}$,
which will establish \eqref{eq:xray-vector-exact}.

Fix $\mathbf{J},\mathbf{J}'\in\mathcal{J}_{\mathbf{r}}$ and define
\[ c_{l,j}:=\mathbbm{1}_{J_l}(j)-\mathbbm{1}_{J'_l}(j) \in\{-1,0,1\}, \quad l=1,2,\ldots,d, \quad j=1,2,\ldots,m_l. \]
Rewriting \eqref{eq:dyadic-relation0} with the $c_{l,j}$ constants defined above,
\begin{equation}\label{eq:dyadic-relation}
\sum_{l=1}^d \sum_{\substack{1\leq j\leq m_l\\ j\neq q_l+1}} c_{l,j} \,2^{2M_l} \bigl(2^{2j}-2^{j+q_l+1}\bigr) = 0.
\end{equation}
Since $\#J_l=\#J'_l=r_l$, we have
\begin{equation}\label{eq:dyadic-cardinality}
\sum_{j=1}^{m_l}c_{l,j}=0 \quad\text{for every }l=1,2,\ldots,d.
\end{equation}
We claim that $c_{l,j}=0$ for every $l$ and $j$. The integer exponent of the largest power of $2$ dividing the number $2^{2M_l}(2^{2j}-2^{j+q_l+1})$, i.e., its $2$-adic valuation, equals 
\[ \begin{cases}
2M_l+2j & \text{if } j\leq q_l,\\
2M_l+j+q_l+1 & \text{if } j\ge q_l+2.
\end{cases} \]
Observe that all these numbers are mutually different, over all choices of indices $l$ and $j$ appearing in \eqref{eq:dyadic-relation}. Consequently, if the sum \eqref{eq:dyadic-relation} had at least one nonzero term, then we could divide it by the largest power of $2$ possible. By what we just proved, this would give a zero sum with precisely one odd term, which would be a contradiction.
Hence $c_{l,j}=0$ whenever $j\neq q_l+1$. For each fixed $l$, the only possible remaining nonzero coefficient is thus $c_{l,q_l+1}$, which exists if $q_l+1\leq m_l$, and \eqref{eq:dyadic-cardinality} forces this coefficient to vanish as well.
We conclude $\mathbf{J}=\mathbf{J}'$ and, as we have already said, this completes the proof of \eqref{eq:xray-vector-exact}.

It remains to prove \eqref{eq:numerator-vector-lower}. Expansion of $u_{\mathbf{m}}f_{\mathbf{m}}$ gives
\[ u_{\mathbf{m}}(x,t) f_{\mathbf{m}}(x,t)
= \sum_{\substack{1\leq j_1\leq m_1\\ \cdots\\ 1\leq j_d\leq m_d\\ (I_1,\ldots,I_d)\in\mathcal{J}_{\mathbf{r}}}} 
e^{2\pi i\sum_{l=1}^d\left((2^{M_l+j_l}+X_{l,I_l})x_l+(4^{M_l+j_l}+Y_{l,I_l})t\right)}. \]
For given $(j_1,\ldots,j_d)$ and $(I_1,\ldots,I_d)$ such that $j_l\not\in I_l$ for every $l$, the space-time frequency of the above summand is precisely
\[ \biggl(X_{\mathbf{J}},\sum_{l=1}^dY_{l,J_l}\biggr), \quad J_l:=I_l\cup\{j_l\}\in\mathcal{J}_{l,r_l+1}. \]
Conversely, fix $\mathbf{J}\in\mathcal{J}_{\mathbf{r}+\mathbf{1}}$. For each $l$, choose $j_l\in J_l$ and set $I_l=J_l\setminus\{j_l\}$. This gives exactly $\prod_{l=1}^d (r_l+1)$ representations of the same space-time frequency. Distinct choices of $\mathbf{J}$ give distinct spatial frequencies $X_{\mathbf{J}}$, since each coordinate $X_{l,J_l}=\sum_{j\in J_l}2^{M_l+j}$ determines $J_l$ by uniqueness of binary expansion. Therefore, the Fourier coefficient of $u_{\mathbf{m}}f_{\mathbf{m}}$ at each of these $\prod_{l=1}^d \binom{m_l}{r_l+1}$ frequencies has magnitude at least $\prod_{l=1}^d(r_l+1)$, so orthogonality on $\mathbb{T}^{d+1}$ gives
\[ \|u_{\mathbf{m}}f_{\mathbf{m}}\|_{\mathrm{L}^2(\mathbb{T}^{d+1})}^2 \ge \prod_{l=1}^d (r_l+1)^2\binom{m_l}{r_l+1}. \]
Since $w_{\mathbf{m}}=|f_{\mathbf{m}}|^2$, this is precisely \eqref{eq:numerator-vector-lower}.
\end{proof}

\begin{proof}[Proof of Theorem~\ref{thm:d1-log-intro}]
Choose the solution and the weight respectively as \eqref{eq:logNsolution} and \eqref{eq:logNweight}.
By Lemma~\ref{lm:dyadic-test-estimates}, both the Stein-type and the Mizohata--Takeuchi right-hand sides are equal to
\[ \#A_{\mathbf{m}}\prod_{l=1}^d\binom{m_l}{r_l} = \prod_{l=1}^d m_l\binom{m_l}{r_l}. \]
Also, the left-hand sides can be bounded from below using \eqref{eq:numerator-vector-lower}. Hence, both constants $\mathfrak{S}_{\mathbb{T}^d}(2^{m_{1}+\cdots+m_{d}})$ and $\mathfrak{M}_{\mathbb{T}^d}(2^{m_{1}+\cdots+m_{d}})$ are bounded from below by
\begin{equation}\label{eq:dyadic-quotient-vector}
\prod_{l=1}^d \frac{(r_l+1)^2\binom{m_l}{r_l+1}}{m_l\binom{m_l}{r_l}} = \prod_{l=1}^d \frac{(r_l+1)(m_l-r_l)}{m_l}.
\end{equation}

For the statement on the constants localised to $[-N,N]^d$, take the ``isotropic'' choice $m_1=\cdots=m_d=m\geq2$ and recall that $r_l=\lfloor m/2\rfloor$. Then $A_{\mathbf{m}}\subseteq[0,2^{dm}]^d$ and \eqref{eq:dyadic-quotient-vector} gives
\[ \mathfrak{M}_{\mathbb{T}^d}(2^{dm}),\ \mathfrak{S}_{\mathbb{T}^d}(2^{dm}) \ge \biggl(\frac{(r+1)(m-r)}{m}\biggr)^d \gtrsim \Bigl(\frac{m}{4}\Bigr)^d \gtrsim_d m^d. \]
Choosing $m =\lfloor (\log_2 N)/d \rfloor$ for sufficiently large $N$ proves \eqref{eq:justlog1}.

For the lower bound in \eqref{eq:justlinear}, take the ``anisotropic'' choice $\mathbf{m}=(n,1,\ldots,1)$, so that $\#A_{\mathbf{m}}=n$. In \eqref{eq:dyadic-quotient-vector}, all factors except the first are equal to $1$, while the first factor is $\frac{(r+1)(n-r)}{n}$, where $r=\lfloor n/2\rfloor$.
Thus, for every $n\ge2$,
\[ \widetilde{\mathfrak{M}}_{\mathbb{T}^d}(n),\ \widetilde{\mathfrak{S}}_{\mathbb{T}^d}(n) \ge \frac{(r+1)(n-r)}{n} \gtrsim n. \]
This proves the lower bounds in \eqref{eq:justlinear}. 

Now we turn to the upper bounds. Because of \eqref{eq:per-MT_vs_St}, it only remains to prove the Stein-type upper bound. 
Let
\[ u(x,t) = \sum_{k\in A} a_k e^{2\pi i (x\cdot k + t|k|^2)} \]
be the solution \eqref{eq:per-solution} for a given trigonometric polynomial
\[ u_0(x) = \sum_{k\in A} a_k e^{2\pi i x\cdot k},\quad A\subseteq\mathbb{Z}^d,\ \#A=n,\ a_k\in\mathbb{C}. \]
We estimate pointwise on $\mathbb{T}^{d+1}$ using the Cauchy--Schwarz inequality,
\[ |u(x,t)|^2 \leq \Bigl(\sum_{k\in A}|a_k|\Bigr)^2
\leq n\sum_{k\in A}|a_k|^2
= n\|u_0\|_{\mathrm{L}^2(\mathbb{T}^d)}^2. \]
Thus,
\begin{equation}\label{eq:trivial-card-cs}
\int_{\mathbb{T}} \int_{\mathbb{T}^d} |u(x,t)|^2w(x,t)\dd x\dd t \leq n\|u_0\|_{\mathrm{L}^2(\mathbb{T}^d)}^2 \int_{\mathbb{T}^{d+1}}w
\end{equation}
for every nonnegative weight $w$. On the other hand, for every $k\in\mathbb{Z}^d$, translation-invariance gives
\[ \int_{\mathbb{T}^d}\rho^{\ast}_{\mathbb{Z}^d}w(x,k)\dd x
= \int_{\mathbb{T}^d}\int_{\mathbb{T}} w(x-2tk,t)\dd t\dd x  
= \int_{\mathbb{T}^{d+1}} w. \]
Consequently,
\[ \sup_{x\in\mathbb{T}^d}\rho^{\ast}_{\mathbb{Z}^d}w(x,k) \geq \int_{\mathbb{T}^{d+1}}w \]
for every $k\in\mathbb{Z}^d$. Summing this inequality with weights $|a_k|^2$ over $k\in A$ gives
\begin{equation}\label{eq:trivial-cs2}
\sum_{k\in A}|a_k|^2 \sup_{x\in\mathbb{T}^d}\rho^{\ast}_{\mathbb{Z}^d}w(x,k)
\geq \|u_0\|_{\mathrm{L}^2(\mathbb{T}^d)}^2 \int_{\mathbb{T}^{d+1}}w.
\end{equation}
Combining \eqref{eq:trivial-card-cs} and \eqref{eq:trivial-cs2} we obtain
\[ \widetilde{\mathfrak{M}}_{\mathbb{T}^d}(n) \leq \widetilde{\mathfrak{S}}_{\mathbb{T}^d}(n) \leq n. \qedhere \]
\end{proof}

\subsection{A comment on flat rectangles and triangles}
The one-dimensional construction considered above uses a weight that could not have come from the mere counting of geometric configurations.
Indeed, if $A\subseteq\mathbb{Z}$ is finite, then the one-dimensional resonance equations
\[ k_1+k_3=k_2+k_4, \quad k_1^2+k_3^2=k_2^2+k_4^2 \]
force $\{k_1,k_3\}=\{k_2,k_4\}$, and hence
\[ \|u_A\|_{\mathrm{L}^4(\mathbb{T}^2)}^4=2(\#A)^2-\#A. \]
For $k\in A\subseteq\mathbb{Z}$, define $\triangle^k(A)$ on $\mathbb{Z}$ in the same way as we did on $\mathbb{Z}^2$.
Degenerate isosceles triangles give $\#\triangle^k(A)\ge\#A$ for every $k\in A$. Thus the analogous ``rectangle--triangle'' ratios coming from weights of the form \eqref{eq:2Dweight} remain bounded by $2$:
\[ \frac{\|u_A\|_{\mathrm{L}^4(\mathbb{T}^2)}^4}{\sum_{k\in A}\#\triangle^k(A)}\leq 2,
\quad \frac{\|u_A\|_{\mathrm{L}^4(\mathbb{T}^2)}^4}{\#A\, \max_{k\in A}\limits \#\triangle^k(A)}\leq 2 \]
and thus cannot be used to construct counterexamples to \eqref{eq:per-Stein} and \eqref{eq:per-MT}.

%%%%%%%%%%%%%%%%%%%%%%%%%%%%%%%%%%%%%%%%%%%%%%%%%%%%%%%%%%%%%%

\section*{Declaration of AI usage}
OpenAI's ChatGPT 5.5 Pro found the combinatorial construction of finite lattice sets achieving comparatively many rectangles relative to isosceles triangles. It was prompted on May 23, 2026, with a simple prompt defining rectangles and isosceles triangles in this context and then asking:
\begin{quote}
\emph{Show that there exist finite subsets $A$ of the planar integer lattice such that the ratio (number of rectangles in $A$) / (number of isosceles triangles in $A$) is arbitrarily large.}
\end{quote}
It reasoned for $87$ minutes, and the answer was correct in principle. The construction was fully proofread, clarified and rewritten by the authors. There were several unsuccessful earlier and later attempts at the problem, using both the same tool and other AI tools. 
In later chats, the same tool helped us quantify the lower bounds for the rectangles-to-triangles ratio, which eventually led us to the dimension $d=2$ estimates \eqref{eq:logtologloglog}.
It helped us find the trigonometric weight in one dimension, by adapting the counterexample by Cairo \cite{Cairo} to the discrete setting, which eventually led us to estimates \eqref{eq:justlog1}, \eqref{eq:justlinear} and \eqref{eq:justlog2}.

Google's Gemini 3.1 Pro was used to create the TikZ code for the four figures.

Except for the uses described above, all other ideas, results, proofs, bibliography and writing of the manuscript are entirely the work of the authors.

%%%%%%%%%%%%%%%%%%%%%%%%%%%%%%%%%%%%%%%%%%%%%%%%%%%%%%%%%%%%%

\section*{Acknowledgements and funding}
The first author was supported by EPSRC Grant EP/W032880/1. He thanks Tony Carbery, Susana Guti\'errez, Misha Rudnev and Mari Cruz Vilela for helpful discussion and correspondence.

The second author is supported in part by the Croatian Science Foundation under the project HRZZ-IP-2022-10-5116 (FANAP) and in part by the European Union -- NextGenerationEU through the National Recovery and Resilience Plan 2021--2026, via an institutional grant of the University of Zagreb Faculty of Science, IK IA 1.1.3, Impact4Math.

The third author was supported by JSPS Overseas Research Fellowship and JSPS Kakenhi grant numbers 19K03546, 19H01796 and 21K13806.

The fourth author circulated Question~\ref{quest:combinatorics} from February 2025 to May 2026, and he thanks the large number of colleagues who offered a wide range of perspectives on how to approach it. This question was the last missing part of a programme developed during the project funded by the first author's EPSRC Grant EP/W032880/1, which is now completed by the results of this paper. He was also supported by EPSRC Grant EP/W032880/1 and is currently supported by his EPSRC Fellowship UKRI3285 \textit{New perspectives in phase-space Analysis and Fourier restriction}.

%The second author is supported in part by the Croatian Science Foundation under the project HRZZ-IP-2022-10-5116 (FANAP) and in part by the European Union -- NextGenerationEU through the National Recovery and Resilience Plan 2021--2026, via an institutional grant of the University of Zagreb Faculty of Science, IK IA 1.1.3, Impact4Math.
 %The fourth author was also supported by EPSRC Grant EP/W032880/1 and is currently supported by his EPSRC Fellowship UKRI3285 \textit{New perspectives in phase-space Analysis and Fourier restriction}.

%%%%%%%%%%%%%%%%%%%%%%%%%%%%%%%%%%%%%%%%%%%%%%%%%%%%%%%%%%%%%%%%

\end{document}